\newtheorem{theorem}{Theorem}
\newtheorem{proposition}[theorem]{Proposition}
\newtheorem{conjecture}[theorem]{Conjecture}
\newtheorem{problem}[theorem]{Problem}
\newcommand{\diam}{{\rm diam}}
\begin{document}

\title{Non-$\ell$-distance-balanced generalized Petersen graphs $GP(n,3)$ and $GP(n,4)$}

\author{Gang Ma$^{a}$ \and Jianfeng Wang$^{a,}$\footnote{Corresponding author} \and Sandi Klav\v{z}ar$^{b,c,d}$}

\date{}

\maketitle
\vspace{-0.8 cm}
\begin{center}
$^a$ School of Mathematics and Statistics, Shandong University of Technology\\
 Zibo, China\\
{\tt math$\_$magang@163.com\\
jfwang@sdut.edu.cn}\\
\medskip

$^b$ Faculty of Mathematics and Physics, University of Ljubljana, Slovenia\\
{\tt sandi.klavzar@fmf.uni-lj.si}\\
\medskip

$^c$ Faculty of Natural Sciences and Mathematics, University of Maribor, Slovenia\\
\medskip

$^d$ Institute of Mathematics, Physics and Mechanics, Ljubljana, Slovenia\\
\end{center}

\begin{abstract}
A connected graph $G$ of diameter ${\rm diam}(G) \ge \ell$  is $\ell$-distance-balanced if $|W_{xy}|=|W_{yx}|$ for every $x,y\in V(G)$ with $d_{G}(x,y)=\ell$, where $W_{xy}$ is the set of vertices of $G$ that are closer to $x$ than to $y$.
We prove that the generalized Petersen graph $GP(n,3)$ where $n>16$ is not $\ell$-distance-balanced for any $1\le \ell<\diam(GP(n,3))$,
and $GP(n,4)$ where $n>24$ is not $\ell$-distance-balanced for any $1\le \ell<\diam(GP(n,4))$.
This partially solves a conjecture posed by \v{S}.~Miklavi\v{c} and P.~\v{S}parl
(Discrete Appl.\ Math.\ 244:143--154, 2018).
\end{abstract}

\noindent {\bf Key words:} generalized Petersen graph; distance-balanced graph; $\ell$-distance-balanced graph

\medskip\noindent
{\bf AMS Subj.\ Class:} 05C12

\section{Introduction}
\label{S:intro}

If $G = (V(G), E(G))$ is a connected graph and $x, y\in V(G)$, then the {\it distance} $d_{G}(x, y)$ between $x$ and $y$ is the number of edges on a shortest $x,y$-path. The diameter $\diam(G)$ of $G$ is the maximum distance between its vertices. The set $W_{xy}$ contains the vertices that are closer to $x$ than to $y$, that is,
$$W_{xy}=\{w\in V(G):\ d_{G}(w,x) < d_{G}(w,y)\}\,.$$
Vertices $x$ and $y$ are {\em balanced} if $|W_{xy}| = |W_{yx}|$.  For an integer $\ell \in [\diam(G)] = \{1,2,\ldots, \diam(G)\}$ we say that $G$ is $\ell$-{\em distance-balanced} if each pair of vertices $x,y\in V(G)$ with $d_{G}(x,y) = \ell$ is balanced. $G$ is said to be {\em highly distance-balanced} if it is $\ell$-distance-balanced for every $\ell\in [\diam(G)]$. $1$-distance-balanced graphs are simply called {\em distance-balanced} graphs.

Distance-balanced graphs were first considered by Handa~\cite{Handa:1999} back in 1999, while the term ``distance-balanced'' was proposed a decade later by Jerebic et al.\ in~\cite{Jerebic:2008}. The latter paper was the trigger for intensive research of distance-balanced graphs, see~\cite{Abiad:2016, Balakrishnan:2014, Balakrishnan:2009, Cabello:2011, cavaleri-2020, fernardes-2022, Ilic:2010, Kutnar:2006, Kutnar:2009, Kutnar:2014, Miklavic:2012, YangR:2009}.
The study of distance-balanced graphs is interesting from various purely graph-theoretic aspects where one focuses on particular properties of such graphs such as symmetry, connectivity or complexity aspects of algorithms related to such graphs.
Moreover, distance-balanced graphs have motivated the introduction of the hitherto much-researched Mostar index~\cite{ali-2021, doslic-2018} and distance-unbalancedness of graphs~\cite{kr-2021, miklavic-2021, xu-2022}. In this context, distance-balanced graphs are the graphs with the Mostar index equal to 0.

In his dissertation~\cite{Frelih:2014}, Frelih generalized distance-balanced graphs to $\ell$-distance balanced graphs. The special case of $\ell=2$ has been studied in detail in~\cite{Frelih:2018}. Among other results it was demonstrated that there exist $2$-distance-balanced graphs that are not $1$-distance-balanced. $2$-distance-balanced graphs that are not $2$-connected were characterized as well as $2$-distance-balanced Cartesian and lexicographic products. In this direction, $\ell$-distance-balanced corona products and lexicographic products were investigated in~\cite{Jerebic:2021}. In~\cite{Miklavic:2018}, Miklavi\v{c} and \v{S}parl obtained some general results on $\ell$-distance balanced graphs. They studied graphs of diameter at most $3$ and investigated $\ell$-distance-balancedness of cubic graphs, in particular of generalized Petersen graphs.
Although generalized Petersen graphs are a family of cubic graphs but it is difficult to determine whether they are $\ell$-distance-balanced or not for some $\ell$.
And that is what has stimulated the main interest in this article. Let us define these graphs.

If $n\ge 3$ and $1\le k<n/2$, then the {\em generalized Petersen graph} $GP(n,k)$ is defined by
\begin{align*}
V(GP(n,k)) & = \{u_i:\ i\in \mathbb{Z}_n\} \cup\{v_i:\ i\in \mathbb{Z}_n\}, \\
E(GP(n,k)) & = \{u_iu_{i+1}:\ i\in \mathbb{Z}_n\} \cup\{v_iv_{i+k}:\ i\in \mathbb{Z}_n\} \cup \{u_iv_i:\ i\in \mathbb{Z}_n\}.
\end{align*}

In \cite{MaG:2023}, the authors proved that $GP(n,k)$ is $\diam(GP(n,k))$-distance-balanced
where $n$ is large relative to $k$. The following theorem was proved.
\begin{theorem}\label{T:GP-DDB}\cite{MaG:2023}
If $n$ and $k$ are integers, where $3\le k< n/2$ and
\begin{equation*}
n\ge\left\{\begin{array}{ll}
8; & k=3, \\
10; & k=4, \\
\frac{k(k+1)}{2}; & k \ \text{is odd and}\ k\ge 5,\\
\frac{k^2}{2}; & k \ \text{is even and}\ k\ge 6,
\end{array}\right.
\end{equation*}
then $GP(n,k)$ is $\diam(GP(n,k))$-distance-balanced.
\end{theorem}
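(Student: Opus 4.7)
The plan is to combine an explicit distance formula in $GP(n,k)$ with the rotational and reflectional symmetries that every generalized Petersen graph possesses.

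First, for $n$ in the stated range, I would derive a closed-form expression for $d(u_0,x)$ (and hence $d(v_0,x)$) for every vertex $x$. Any shortest path from $u_0$ decomposes canonically into an outer arc, a spoke down, a sequence of $v$-cycle edges (each shifting the index by $\pm k$) possibly with short interleaved spoke detours, and a final spoke followed by a short outer arc. The lower bounds on $n$ relative to $k$ are precisely what rule out the ``wrap-around'' shortcuts that would blur this decomposition, and from the resulting formula both the exact value of $\diam(GP(n,k))$ and the complete list of pairs attaining it can be read off.

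Next, I would exploit the two universal automorphisms of $GP(n,k)$: the rotation $\rho: x_i\mapsto x_{i+1}$ and the reflection $\tau: x_i\mapsto x_{-i}$ (each acting simultaneously on the $u$- and $v$-indices). Their composite $\rho^{j}\tau$ is an involution that swaps $u_0\leftrightarrow u_j$ and $v_0\leftrightarrow v_j$. Consequently, any diameter-realizing pair of the ``uniform'' type $(u_i,u_j)$ or $(v_i,v_j)$ is automatically balanced, and using $\rho$ we may assume every remaining diameter pair has the form $(u_0,v_j)$.

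The main obstacle is this mixed case. In general $GP(n,k)$ admits no automorphism that swaps the $u$- and $v$-layers (such a map exists only when $k^{2}\equiv\pm 1\pmod{n}$), so balancedness cannot be obtained by symmetry alone. I would therefore compute $|W_{u_0v_j}|$ and $|W_{v_j u_0}|$ directly from the distance formula of the first step, classifying each vertex by its index class modulo $\gcd(n,k)$ (the number of orbits of the inner $v$-cycle action). The natural matching to try is $u_i\leftrightarrow v_{j-i}$ and $v_i\leftrightarrow u_{j-i}$: for most indices a given pair contributes exactly one vertex to each of $W_{u_0v_j}$ and $W_{v_j u_0}$, and the lower bound on $n$ is used to guarantee that the finitely many ``boundary'' indices where this pairing fails still split equally between the two sides. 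Carrying out this bookkeeping, with the case split on the parity of $k$ that appears in the hypothesis, is where the work of the proof concentrates.
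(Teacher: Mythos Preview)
This theorem is not proved in the present paper; it is quoted from \cite{MaG:2023} and invoked only as background (see the sentence introducing Theorem~\ref{T:GP-DDB}). There is therefore no proof here to compare your proposal against.

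On the substance of your outline: your first two steps are sound, and your observation that the involution $\rho^{j}\tau$ automatically balances every same-type diameter pair $(u_i,u_j)$ or $(v_i,v_j)$ is exactly right. But the distance data this paper itself records for $k=3$ and $k=4$ (Propositions~\ref{P:GP(n,3)-1} and~\ref{P:GP(n,4)-1}) show that, once $n$ is in the stated range, $\max_j d(u_0,v_j)$ and $\max_j d(v_0,u_j)$ both fall strictly below $D$; equivalently, \emph{no mixed pair realizes the diameter}. If the argument in \cite{MaG:2023} establishes this in general---and the shape of the lower bounds on $n$ makes that plausible---then your third step is vacuous and the theorem follows immediately from step two. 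In that case the real work is not the $W_{u_0v_j}$ bookkeeping you anticipate, but the distance formula together with the verification that mixed-type eccentricities stay below the diameter. Your proposed matching $u_i\leftrightarrow v_{j-i}$, $v_i\leftrightarrow u_{j-i}$ would in any event require more justification than you give: away from the arithmetic condition $k^{2}\equiv\pm1\pmod{n}$ this map is not distance-preserving, so there is no a~priori reason the boundary discrepancies should cancel.
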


In \cite{Miklavic:2018}, the authors gave the following conjecture and proved that
the conjecture was right when $k=2$.
\begin{conjecture}\cite{Miklavic:2018}\label{C:GP-onlyD-DB}
Let $k\ge 2$ be an integer and let
\begin{equation*}
n_k=\left\{\begin{array}{ll}
11, & k=2; \\
(k+1)^2, & k \ \text{odd};\\
k(k+2), & k\ge 4 \ \text{even}.
\end{array}\right.
\end{equation*}
Then for any $n>n_k$ the graph $GP(n,k)$ is not $\ell$-distance-balanced for any $1\le \ell<D$, where
$D$ is the diameter of $GP(n,k)$. Moreover, $n_k$ is the smallest integer with this property.
\end{conjecture}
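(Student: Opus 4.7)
The plan is to fix $k\ge 2$ and, for every $\ell$ in the range $1\le \ell<\diam(GP(n,k))$, exhibit an explicit pair of vertices at distance $\ell$ that is not balanced. The dihedral symmetry generated by the rotation $\rho\colon u_i\mapsto u_{i+1},\, v_i\mapsto v_{i+1}$ and by the reflection across the $0$-spoke permits us to assume without loss of generality that one of the chosen vertices is either $u_0$ or $v_0$.

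The first preparatory step is a closed-form formula for the distances $d(u_0,u_j)$ and $d(u_0,v_j)$ valid once $n$ is large compared to $k$; such a formula reduces to minimizing a cost $a+b+c$ over nonnegative integers $(a,b,c)$, where $a$ counts outer-cycle steps, $b$ counts spoke traversals (subject to a parity constraint), and $ck$ records the net displacement along the inner cycle. With this in hand I would, for each $\ell$, choose a canonical candidate pair $(x_\ell,y_\ell)$ at distance exactly $\ell$: typically $(u_0,u_\ell)$ for small $\ell$, pairs of the form $(u_0,v_{j(\ell)})$ or $(v_0,v_{j(\ell)})$ for the middle range, and pairs close to antipodal in $GP(n,k)$ for $\ell$ near the diameter, where Theorem~\ref{T:GP-DDB} serves as a contrast.

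The main step is the comparison of $W_{x_\ell y_\ell}$ with $W_{y_\ell x_\ell}$. Applying an appropriate power of $\rho$ sends the former to a set that almost coincides with the latter, the symmetric difference living in a bounded neighborhood of the four ``endpoint'' vertices $u_0, v_0, u_{j(\ell)}, v_{j(\ell)}$; the distance formula then lets me enumerate exactly which vertices in that neighborhood lie on each side. Thus $|W_{x_\ell y_\ell}|-|W_{y_\ell x_\ell}|$ reduces to a small explicit expression in $\ell$ and $k$, which I would show is nonzero once $n>n_k$. Sharpness of $n_k$ would be established by exhibiting, for $n=n_k$, some $\ell<D$ and some pair at distance $\ell$ that is in fact balanced, paralleling the $k=2$ base case of Miklavi\v{c} and \v{S}parl.

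The principal obstacle is the uniform handling of every $k$. For each $k$, the shortest-path structure of $GP(n,k)$ decomposes into several regimes determined by $\ell \bmod k$ and the parity of the number of spoke crossings, and the canonical pair $(x_\ell,y_\ell)$ that produces a nonzero discrepancy depends sensitively on those regimes. For $k\in\{3,4\}$ these regimes admit direct enumeration, which is why the present paper restricts to those cases. In full generality one needs a parametric shortest-path lemma, together with a uniform positive lower bound on the discrepancy within each regime and a matching construction realizing $n_k$, and producing these uniformly in $k$ appears to be the essential difficulty in settling the conjecture.
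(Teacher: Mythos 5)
The statement you are addressing is a conjecture that the paper does not prove in full: the paper establishes it only for $k=3$ (Theorem~\ref{T:GP(n,3)-onlyD-DB}) and $k=4$ (Theorem~\ref{T:GP(n,4)-onlyD-DB}) and explicitly lists $k\ge 5$ as open. Your proposal is a programme rather than a proof, and you concede as much in your final paragraph; at best it is an outline of the $k\in\{3,4\}$ argument, which it does broadly resemble (residue-by-residue distance formulas, canonical witness pairs, direct counting of $W_{xy}$ versus $W_{yx}$). But even as an outline it contains errors that would derail the execution.

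First, your canonical pairs $(u_0,u_\ell)$ for small $\ell$ cannot witness imbalance: the reflection $u_i\mapsto u_{\ell-i}$, $v_i\mapsto v_{\ell-i}$ is an automorphism of $GP(n,k)$ swapping $u_0$ and $u_\ell$, so every pair of two outer vertices (and likewise two inner vertices) is automatically balanced. This is precisely why the paper's witnesses are always mixed pairs: $(u_0,v_0)$ for $\ell=1$, $(u_0,v_{-k})$ for $\ell=2$, and $(u_0,v_j)$ with suitable $j$ for $3\le\ell<D$. Second, your ``main step'' --- that an appropriate power of $\rho$ carries $W_{x_\ell y_\ell}$ onto a set differing from $W_{y_\ell x_\ell}$ only in a bounded neighbourhood of the endpoints --- is false for the mixed pairs one must use: no automorphism exchanges an outer with an inner vertex, and for $(u_0,v_0)$ in $GP(6m,3)$ the paper computes $|W_{u_0v_0}|=4m+5$ against $|W_{v_0u_0}|=8m-5$, a discrepancy growing linearly in $n$, not bounded. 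The actual argument has no such shortcut; it is a full enumeration of distances from both endpoints to every vertex, organised by $i\bmod k$, with the reflection used only to halve the bookkeeping. Third, your sharpness step is logically inverted: to show $n_k$ is smallest you must show that $GP(n_k,k)$ \emph{is} $\ell$-distance-balanced for some $\ell<D$, which requires \emph{every} pair at distance $\ell$ to be balanced, not the exhibition of a single balanced pair. Finally, and decisively, the uniform-in-$k$ distance lemma and discrepancy bound that you identify as the ``principal obstacle'' are exactly the content of the conjecture; without them nothing is proved beyond the cases the paper already covers.
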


In this paper, we study Conjecture~\ref{C:GP-onlyD-DB} and prove that Conjecture~\ref{C:GP-onlyD-DB} is right when $k=3,4$.
The following two theorems are the main results of the paper.
\begin{theorem}\label{T:GP(n,3)-onlyD-DB}
For any $n>16$, the generalized Petersen graph $GP(n,3)$ is not $\ell$-distance-balanced for any $1\le \ell<\diam(GP(n,3))$.
Moreover, $16$ is the smallest integer with this property.
\end{theorem}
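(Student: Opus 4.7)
My plan is to show, for every $\ell$ with $1\le \ell<D:=\diam(GP(n,3))$, the existence of a pair of vertices at distance $\ell$ that is not balanced; by Theorem~\ref{T:GP-DDB}, $GP(n,3)$ is already $D$-distance-balanced for $n\ge 8$, so this would cover exactly the remaining values of $\ell$. The first step is to narrow the search for unbalanced pairs. For generic $n$, the automorphism group of $GP(n,3)$ is generated by the rotation $\rho\colon u_i\mapsto u_{i+1},\ v_i\mapsto v_{i+1}$ and the reflection $\sigma\colon u_i\mapsto u_{-i},\ v_i\mapsto v_{-i}$. The composition $\sigma\circ\rho^{j}$ swaps $u_0\leftrightarrow u_j$ and $v_0\leftrightarrow v_j$, so every pair of the form $(u_a,u_b)$ or $(v_a,v_b)$ is automatically balanced. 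Hence the only candidates for unbalanced pairs are the mixed pairs, which by rotational symmetry I may normalize to $(u_0,v_j)$ for some $j\in\mathbb{Z}_n$.

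The second step is, for each $\ell<D$, to exhibit an index $j=j(\ell,n)$ with $d_{GP(n,3)}(u_0,v_j)=\ell$ and show directly that $|W_{u_0v_j}|\neq |W_{v_ju_0}|$. This requires an explicit description of the distance function in $GP(n,3)$, whose form depends on the residue of $n\bmod 6$ and on the range of $j$. Using such a description, I would partition the $2n$ vertices along the outer and inner $n$-cycles according to their distances from $u_0$ and $v_j$, compute $|W_{u_0v_j}|$ and $|W_{v_ju_0}|$ in closed form (piecewise in $\ell$), and verify that the difference is nonzero. For small $\ell$ the unbalance typically comes from local asymmetry near the two endpoints (the spoke incident to $v_j$ tends to put extra vertices closer to $u_0$), while for $\ell$ close to $D$ the unbalance comes from the global geometry of the two concentric cycles.

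The third step is to verify that $16$ is the smallest integer with the stated property. This amounts to exhibiting some $\ell$ with $1\le \ell <\diam(GP(16,3))$ for which $GP(16,3)$ \emph{is} $\ell$-distance-balanced; since $n=16$ yields a single finite graph, this can be done by an explicit (possibly computer-aided) check of all vertex pairs at each such $\ell$.

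The hard part will be step two: the distance function in $GP(n,3)$ is piecewise linear in $j$ with break points depending on $j$ and on $n\bmod 3$, so the proof inevitably splits into several cases according to the residues of $n$ and $\ell$; in each case the computation of $|W_{u_0v_j}|-|W_{v_ju_0}|$ must be carried out carefully, and one must choose $j$ cleverly (often close to one of the break points of the distance function) to ensure the difference is nonzero in the intended range of $\ell$. A single uniform witness $j$ is unlikely to work for all $\ell$, so I expect the argument to provide different witnesses for short, medium, and long $\ell$, with $\ell$ near the diameter being the most delicate regime, since the two balls nearly cover the whole graph and the surplus must be located precisely.
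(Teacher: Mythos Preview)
Your proposal is correct and follows essentially the same strategy as the paper: reduce to mixed pairs $(u_0,v_j)$, then for each $\ell<D$ exhibit an explicit $j$ and compute $|W_{u_0v_j}|$ and $|W_{v_ju_0}|$ by a case analysis on the residue of $n$ (the paper uses $n\bmod 6$ rather than $n\bmod 3$), treating $\ell=1$, $\ell=2$, and $3\le\ell<D$ separately; the minimality of $16$ is handled by citing the known computation that $GP(16,3)$ is $5$-distance-balanced.
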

\begin{theorem}\label{T:GP(n,4)-onlyD-DB}
For any $n>24$, the generalized Petersen graph $GP(n,4)$ is not $\ell$-distance-balanced for any $1\le \ell<\diam(GP(n,4))$.
Moreover, $24$ is the smallest integer with this property.
\end{theorem}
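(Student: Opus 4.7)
The plan is to imitate the approach used for $k=2$ in~\cite{Miklavic:2018}: for each $\ell$ with $1 \le \ell < D := \diam(GP(n,4))$, exhibit an explicit pair of vertices $x,y$ at distance $\ell$ for which $|W_{xy}| \ne |W_{yx}|$. The rotational automorphism $\rho : u_i \mapsto u_{i+1}$, $v_i \mapsto v_{i+1}$ and (when it helps) the reflection $u_i \mapsto u_{-i}$, $v_i \mapsto v_{-i}$ let us assume one vertex is $u_0$ or $v_0$, so the candidates reduce to pairs of the form $(u_0,u_j)$, $(u_0,v_j)$, $(v_0,u_j)$, or $(v_0,v_j)$ with $j$ chosen so that the distance equals $\ell$.

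The computational backbone is a sharp piecewise description of the three functions $d(u_0,u_j)$, $d(u_0,v_j)$, $d(v_0,v_j)$ in $GP(n,4)$ for $n > 24$. The threshold $n > k(k+2) = 24$ is the same one underlying Theorem~\ref{T:GP-DDB} and should ensure that shortest paths use only a bounded local mixture of outer and inner steps, making each distance a piecewise-linear function of $j$ on $[0, \lfloor n/2 \rfloor]$. With such formulas in hand, for any candidate pair $(x,y)$ the quantity $|W_{xy}| - |W_{yx}|$ becomes a finite signed sum of $\mathrm{sgn}(d(\cdot,y)-d(\cdot,x))$ values at the vertices $u_i, v_i$, and the task is reduced to showing this sum is nonzero.

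For each $\ell$ I would select a witness pair appropriate to its location: small $\ell$ are handled by near-endpoint pairs such as $(u_0,u_\ell)$, $(u_0,v_{\ell-1})$, or $(u_0,v_\ell)$; intermediate $\ell$ by mixed pairs $(u_0,v_j)$ chosen so that the two endpoints sit in structurally inequivalent positions within the inner subgraph; and $\ell$ close to $D$ by pairs on the far side of the outer cycle. The crucial asymmetry is that the inner subgraph of $GP(n,4)$ splits into $\gcd(n,4)$ cycles of length $n/\gcd(n,4)$, so $u_0$ and $v_j$ have very different views of the inner structure when $j \not\equiv 0 \pmod 4$. The principal obstacle is the volume of case analysis: subcases arise from $n \bmod 4$, the parity of $\ell$, and the location of $\ell$ relative to the diameter, and each must be settled by an explicit closed-form computation of $|W_{xy}| - |W_{yx}|$ from the distance formulas, showing that the result does not vanish once $n > 24$.

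Finally, the sharpness claim, that $24$ is the smallest integer with this property, amounts to proving that $GP(24,4)$ is $\ell$-distance-balanced for some $1 \le \ell < \diam(GP(24,4))$. This is a finite check: one computes $\diam(GP(24,4))$ and exhibits a specific $\ell$ for which every pair at distance $\ell$ is balanced, exploiting the vertex-transitivity of $GP(24,4)$ on each of the orbits $\{u_i\}$ and $\{v_i\}$ to reduce the number of representative pairs to a small list that can be inspected directly.
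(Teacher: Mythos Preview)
Your overall strategy---fix one endpoint at $u_0$ (or $v_0$) by vertex-transitivity on each orbit, derive explicit distance formulas, and for every $\ell$ exhibit an unbalanced pair---is exactly what the paper does. But what you have written is a plan rather than a proof: none of the distance formulas are stated, no $|W_{xy}|-|W_{yx}|$ is computed, and the witness pairs are left unspecified. The paper's argument is almost entirely the execution of that plan, so the substantive content is missing.

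One concrete issue: your suggested witnesses $(u_0,u_\ell)$ can never work. The map $i\mapsto \ell-i$ is an automorphism of $GP(n,4)$ that swaps $u_0$ and $u_\ell$ (and likewise $v_0$ and $v_\ell$), so every outer--outer and every inner--inner pair is automatically balanced. All witnesses must be mixed pairs $(u_0,v_j)$, and choosing $j$ so that $d(u_0,v_j)=\ell$ while keeping the computation tractable is part of the work. The paper uses $(u_0,v_0)$ for $\ell=1$, $(u_0,v_{-4})$ for $\ell=2$, and for $3\le \ell<D$ shows there is always some $v_j$ with $8\le j\le n/2$ and $d(u_0,v_j)=\ell$.

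Two further points where the paper's execution goes beyond your sketch. First, the case analysis is on $n\bmod 8$, not $n\bmod 4$: the distance formulas near the antipode depend on the residue modulo $8$. Second, for $\ell\ge 3$ the paper introduces a useful decomposition $|W_{u_0v_j}|=|W^1_{u_0v_j}|+|W^1_{u_0v_{n-j}}|-2$ (and similarly for $W_{v_ju_0}$), where $W^1$ counts only vertices with indices in $\{0,\dots,j\}$; this lets one tabulate the one-sided quantities $|W^1_{u_0v_j}|$, $|W^1_{v_ju_0}|$ once for all $j$ with $8\le j\le n-8$ (split by $j\bmod 4$ and the parity of $\lfloor j/4\rfloor$) and then read off the global imbalance. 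Without something like this, your ``intermediate $\ell$'' case would require a separate full computation for each $n$. For the sharpness at $n=24$, the paper simply cites~\cite{Miklavic:2018}, which records that $GP(24,4)$ is $\ell$-distance-balanced exactly for $\ell\in\{1,6\}$ with $\diam(GP(24,4))=6$.
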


In section \ref{S:GP(n,3)}, we prove Theorem~\ref{T:GP(n,3)-onlyD-DB}.
In section \ref{S:GP(n,4)}, we prove Theorem~\ref{T:GP(n,4)-onlyD-DB}.
In section \ref{S:conluding}, we give one problem which is worth studying in the future.

\section{The proof of Theorem~\ref{T:GP(n,3)-onlyD-DB}}\label{S:GP(n,3)}

From \cite{Miklavic:2018}, the diameter of $GP(16,3)$ is $6$ and $GP(16,3)$ is $\ell$-distance-balanced
if and only if $\ell\in\{5,6\}$.
So we can suppose $n> 16$ when we prove Theorem \ref{T:GP(n,3)-onlyD-DB}. We will prove Theorem \ref{T:GP(n,3)-onlyD-DB}
via Proposition \ref{P:GP(n,3)-1}, \ref{P:GP(n,3)-2} and \ref{P:GP(n,3)-3}.

\begin{proposition}\label{P:GP(n,3)-1}
For any $n>16$, the generalized Petersen graph $GP(n,3)$ is not $1$-distance-balanced.
\end{proposition}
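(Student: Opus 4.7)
The plan is to prove that the spoke $u_0v_0$ is an unbalanced edge. By the rotational automorphism $\rho:u_i\mapsto u_{i+1},\ v_i\mapsto v_{i+1}$ of $GP(n,3)$, the transmission $\sigma(x):=\sum_{w\in V(GP(n,3))}d(x,w)$ takes a single value $\sigma_u$ on every outer vertex and a single value $\sigma_v$ on every inner vertex. Combined with the elementary identity
\[
|W_{xy}|-|W_{yx}|=\sigma(y)-\sigma(x),
\]
valid for any edge $xy$ (each vertex $w$ contributes $d(w,x)-d(w,y)\in\{-1,0,+1\}$ on the right-hand side), this shows that every outer-cycle edge $u_iu_{i+1}$ and every inner edge $v_iv_{i+3}$ is automatically balanced, so the proposition reduces to showing $\sigma_u\neq \sigma_v$ for every $n>16$.

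To establish that, I would determine the BFS layer sizes $|L_k(u_0)|$ and $|L_k(v_0)|$ in $GP(n,3)$ in closed form, since $\sigma(x)=\sum_{k\geq 1}k\cdot|L_k(x)|$. A shortest path in $GP(n,3)$ decomposes into outer walks (stride $1$), inner jumps (stride $3$), and spoke transitions, and a routine BFS comparison shows that an optimal path uses at most one inner segment. This yields closed-form formulas for $d(u_0,u_j),d(u_0,v_j),d(v_0,u_j),d(v_0,v_j)$ depending only on the signed index and on the residue $n\bmod 6$. The BFS from $v_0$ spreads faster in the early layers (because both non-spoke neighbours of $v_0$ lie at stride $3$), while the BFS from $u_0$ correspondingly carries more vertices into the later layers; summing $\sum k|L_k|$ then expresses $\sigma_u-\sigma_v$ as an explicit linear-plus-periodic function of $n$ that can be verified to be nonzero in each residue class for $n>16$. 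As a sanity check, in $GP(17,3)$ the two layer-size sequences work out to $(1,3,6,8,10,6)$ and $(1,3,6,10,12,2)$, giving $\sigma_u-\sigma_v=6\neq 0$.

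The main obstacle will be two-fold: first, a rigorous proof of the ``at most one inner segment'' optimality lemma, which is required in order to write the distances in a clean closed form; second, the careful bookkeeping of the wrap-around correction near the antipodes of $u_0$ and $v_0$, where the BFS layer sizes deviate from their steady-state behaviour in a way that depends on $n\bmod 6$ and on the parities of $\lfloor n/2\rfloor$ and $\lfloor n/3\rfloor$. Once these are settled, the sign of $\sigma_u-\sigma_v$ is determined for all $n>16$; the sharpness of the bound at $n=16$ is explained by a single coincidence in the correction term, after which the positive leading behaviour of $\sigma_u-\sigma_v$ dominates uniformly.
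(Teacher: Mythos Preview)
Your plan is sound and arrives at essentially the same computation as the paper, but through a different door. The identity $|W_{xy}|-|W_{yx}|=\sigma(y)-\sigma(x)$ for adjacent $x,y$ is correct, and together with the rotational symmetry it reduces the question to $\sigma_u\neq\sigma_v$; this also explains for free why the outer-cycle and inner-cycle edges are automatically balanced. The paper does not use this framing: it simply picks the spoke $u_0v_0$, lists $d(u_0,w)$ and $d(v_0,w)$ for every vertex $w$ in six cases according to $n\bmod 6$, and reads off $|W_{u_0v_0}|$ and $|W_{v_0u_0}|$ explicitly (e.g.\ $4m+5$ versus $8m-5$ when $n=6m$). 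Since $\sigma_v-\sigma_u=|W_{u_0v_0}|-|W_{v_0u_0}|$, your ``linear-plus-periodic function of $n$'' is precisely the difference the paper tabulates, and your case split by $n\bmod 6$ matches theirs. What your framing buys is a conceptual reason why the spoke is the only edge that can fail; what the paper's bare enumeration buys is that no separate ``at most one inner segment'' lemma needs to be isolated, since the distances are simply written down residue class by residue class.

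One small correction: the threshold $n>16$ is not sharp for \emph{this} proposition. The paper records that $GP(16,3)$ is $\ell$-distance-balanced only for $\ell\in\{5,6\}$, so it is already not $1$-distance-balanced at $n=16$; the sharpness of $16$ in the overarching theorem comes from $\ell=5$, not from $\ell=1$. Your closing remark about a ``single coincidence at $n=16$'' in $\sigma_u-\sigma_v$ is therefore misplaced.
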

\begin{proof}
In $GP(n,3)$, $d(u_0,v_0)=1$ and we will prove that $|W_{u_0v_{0}}|<|W_{v_{0}u_0}|$. We divide the discussion into
the following six cases according to the size of $n$.

(1) When $n=6m$ where $m\ge 3$.

By symmetry, we just need to consider vertices $u_i$ and $v_i$ where $1\le i\le \frac{n}{2}$.

$d(u_0,v_{3t})=1+t$ and $d(v_0,v_{3t})=t$ where $1\le t\le m$.
$d(u_0,v_{3t+1})=2+t$ and $d(v_0,v_{3t+1})=3+t$ where $0\le t< m$.
$d(u_0,v_{3t+2})=3+t$ and $d(v_0,v_{3t+2})=4+t$ where $0\le t< m$.

$d(u_0,u_{3t})=2+t$ and $d(v_0,u_{3t})=1+t$ where $1\le t\le m$.
$d(u_0,u_{1})=1$ and $d(v_0,u_{1})=2$.
$d(u_0,u_{3t+1})=3+t$ and $d(v_0,u_{3t+1})=2+t$ where $1\le t< m$.
$d(u_0,u_{2})=2$ and $d(v_0,u_{2})=3$.
$d(u_0,u_{3t+2})=4+t$ and $d(v_0,u_{3t+2})=3+t$ where $1\le t< m$.

Note that $u_0\in W_{u_0v_{0}}$ and $v_{0}\in W_{v_{0}u_0}$.
Combined with the above discussion, $|W_{u_0v_{0}}|=2(2m+2)+1=4m+5$
and $|W_{v_{0}u_0}|=2(4m-4)+3=8m-5$. Because $m\ge 3$, $|W_{u_0v_{0}}|<|W_{v_{0}u_0}|$.

(2) When $n=6m+1$ where $m\ge 3$.

By symmetry, we just need to consider vertices $u_i$ and $v_i$ where $1\le i\le \frac{n}{2}$.

$d(u_0,v_{3t})=1+t$ and $d(v_0,v_{3t})=t$ where $1\le t\le m$.
$d(u_0,v_{3t+1})=2+t$ where $0\le t< m$.
$d(v_0,v_{3t+1})=3+t$ where $0\le t\le m-2$, and $d(v_0,v_{3(m-1)+1})=m+1$.
$d(u_0,v_{3t+2})=3+t$ and $d(v_0,v_{3t+2})=4+t$ where $0\le t< m$.

$d(u_0,u_{3t})=2+t$ and $d(v_0,u_{3t})=1+t$ where $1\le t\le m$.
$d(u_0,u_{1})=1$ and $d(v_0,u_{1})=2$.
$d(u_0,u_{3t+1})=3+t$ and $d(v_0,u_{3t+1})=2+t$ where $1\le t< m$.
$d(u_0,u_{2})=2$ and $d(v_0,u_{2})=3$.
$d(u_0,u_{3t+2})=4+t$ and $d(v_0,u_{3t+2})=3+t$ where $1\le t< m$.

Note that $u_0\in W_{u_0v_{0}}$ and $v_{0}\in W_{v_{0}u_0}$.
Combined with the above discussion, $|W_{u_0v_{0}}|=2(2m+1)+1=4m+3$
and $|W_{v_{0}u_0}|=2(4m-2)+1=8m-3$. Because $m\ge 3$, $|W_{u_0v_{0}}|<|W_{v_{0}u_0}|$.

(3) When $n=6m+2$ where $m\ge 3$.

By symmetry, we just need to consider vertices $u_i$ and $v_i$ where $1\le i\le \frac{n}{2}$.

$d(u_0,v_{3t})=1+t$ and $d(v_0,v_{3t})=t$ where $1\le t\le m$.
$d(u_0,v_{3t+1})=2+t$ and $d(v_0,v_{3t+1})=3+t$ where $0\le t\le m$.
$d(u_0,v_{3t+2})=3+t$ where $0\le t< m$.
$d(v_0,v_{3t+2})=4+t$ where $0\le t\le m-2$, and $d(v_0,v_{3(m-1)+2})=m+1$.

$d(u_0,u_{3t})=2+t$ and $d(v_0,u_{3t})=1+t$ where $1\le t\le m$.
$d(u_0,u_{1})=1$ and $d(v_0,u_{1})=2$.
$d(u_0,u_{3t+1})=3+t$ and $d(v_0,u_{3t+1})=2+t$ where $1\le t\le m$.
$d(u_0,u_{2})=2$ and $d(v_0,u_{2})=3$.
$d(u_0,u_{3t+2})=4+t$ and $d(v_0,u_{3t+2})=3+t$ where $1\le t< m$.

Note that $u_0\in W_{u_0v_{0}}$ and $v_{0}\in W_{v_{0}u_0}$.
Combined with the above discussion, $|W_{u_0v_{0}}|=2(2m+1)+2=4m+4$
and $|W_{v_{0}u_0}|=2(4m-1)+2=8m$. Because $m\ge 3$, $|W_{u_0v_{0}}|<|W_{v_{0}u_0}|$.

(4) When $n=6m+3$ where $m\ge 3$.

By symmetry, we just need to consider vertices $u_i$ and $v_i$ where $1\le i\le \frac{n}{2}$.

$d(u_0,v_{3t})=1+t$ and $d(v_0,v_{3t})=t$ where $1\le t\le m$.
$d(u_0,v_{3t+1})=2+t$ and $d(v_0,v_{3t+1})=3+t$ where $0\le t\le m$.
$d(u_0,v_{3t+2})=3+t$ and $d(v_0,v_{3t+2})=4+t$ where $0\le t< m$.

$d(u_0,u_{3t})=2+t$ and $d(v_0,u_{3t})=1+t$ where $1\le t\le m$.
$d(u_0,u_{1})=1$ and $d(v_0,u_{1})=2$.
$d(u_0,u_{3t+1})=3+t$ and $d(v_0,u_{3t+1})=2+t$ where $1\le t\le m$.
$d(u_0,u_{2})=2$ and $d(v_0,u_{2})=3$.
$d(u_0,u_{3t+2})=4+t$ and $d(v_0,u_{3t+2})=3+t$ where $1\le t< m$.

Note that $u_0\in W_{u_0v_{0}}$ and $v_{0}\in W_{v_{0}u_0}$.
Combined with the above discussion, $|W_{u_0v_{0}}|=2(2m+3)+1=4m+7$
and $|W_{v_{0}u_0}|=2(4m-1)+1=8m-1$. Because $m\ge 3$, $|W_{u_0v_{0}}|<|W_{v_{0}u_0}|$.

(5) When $n=6m+4$ where $m\ge 3$.

By symmetry, we just need to consider vertices $u_i$ and $v_i$ where $1\le i\le \frac{n}{2}$.

$d(u_0,v_{3t})=1+t$ and $d(v_0,v_{3t})=t$ where $1\le t\le m$.
$d(u_0,v_{3t+1})=2+t$ where $0\le t\le m$.
$d(v_0,v_{3t+1})=3+t$ where $0\le t\le m-1$, and $d(v_0,v_{3m+1})=m+1$.
$d(u_0,v_{3t+2})=3+t$ and $d(v_0,v_{3t+2})=4+t$ where $0\le t\le m$.

$d(u_0,u_{3t})=2+t$ and $d(v_0,u_{3t})=1+t$ where $1\le t\le m$.
$d(u_0,u_{1})=1$ and $d(v_0,u_{1})=2$.
$d(u_0,u_{3t+1})=3+t$ and $d(v_0,u_{3t+1})=2+t$ where $1\le t\le m$.
$d(u_0,u_{2})=2$ and $d(v_0,u_{2})=3$.
$d(u_0,u_{3t+2})=4+t$ and $d(v_0,u_{3t+2})=3+t$ where $1\le t\le m$.

Note that $u_0\in W_{u_0v_{0}}$ and $v_{0}\in W_{v_{0}u_0}$.
Combined with the above discussion, $|W_{u_0v_{0}}|=2(2m+2)+2=4m+6$
and $|W_{v_{0}u_0}|=2\times 4m+2=8m+2$. Because $m\ge 3$, $|W_{u_0v_{0}}|<|W_{v_{0}u_0}|$.

(6) When $n=6m+5$ where $m\ge 2$.

By symmetry, we just need to consider vertices $u_i$ and $v_i$ where $1\le i\le \frac{n}{2}$.

$d(u_0,v_{3t})=1+t$ and $d(v_0,v_{3t})=t$ where $1\le t\le m$.
$d(u_0,v_{3t+1})=2+t$ and $d(v_0,v_{3t+1})=3+t$ where $0\le t\le m$.
$d(u_0,v_{3t+2})=3+t$ where $0\le t\le m-1$ and $d(u_0,v_{3m+2})=m+2$.
$d(v_0,v_{3t+2})=4+t$ where $0\le t\le m-2$, $d(v_0,v_{3(m-1)+2})=m+2$ and $d(v_0,v_{3m+2})=m+1$.

$d(u_0,u_{3t})=2+t$ and $d(v_0,u_{3t})=1+t$ where $1\le t\le m$.
$d(u_0,u_{1})=1$ and $d(v_0,u_{1})=2$.
$d(u_0,u_{3t+1})=3+t$ and $d(v_0,u_{3t+1})=2+t$ where $1\le t\le m$.
$d(u_0,u_{2})=2$ and $d(v_0,u_{2})=3$.
$d(u_0,u_{3t+2})=4+t$ and $d(v_0,u_{3t+2})=3+t$ where $1\le t\le m-1$.
$d(u_0,u_{3m+2})=m+3$ and $d(v_0,u_{3m+2})=m+2$.

Note that $u_0\in W_{u_0v_{0}}$ and $v_{0}\in W_{v_{0}u_0}$.
Combined with the above discussion, $|W_{u_0v_{0}}|=2(2m+2)+1=4m+5$
and $|W_{v_{0}u_0}|=2(4m+1)+1=8m+3$. Because $m\ge 2$, $|W_{u_0v_{0}}|<|W_{v_{0}u_0}|$.

\end{proof}

\begin{proposition}\label{P:GP(n,3)-2}
For any $n>16$, the generalized Petersen graph $GP(n,3)$ is not $2$-distance-balanced.
\end{proposition}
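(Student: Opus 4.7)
The strategy parallels Proposition~\ref{P:GP(n,3)-1}: exhibit a single pair of vertices at distance $2$ whose two $W$-sets differ in size, and perform a case analysis on $n \bmod 6$. The natural analogue of the pair $(u_0, v_0)$ is the distance-$2$ pair $(u_0, v_3)$, connected by the $2$-path $u_0\text{--}v_0\text{--}v_3$, which is a shortest $u_0, v_3$-path for every $n > 6$. The plan is to show that $|W_{u_0 v_3}| \neq |W_{v_3 u_0}|$ for all $n > 16$.

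The distance function $d(u_0,\cdot)$ on $V(GP(n,3))$ has already been tabulated, in each of the six residue classes $n\equiv r \pmod 6$, in the proof of Proposition~\ref{P:GP(n,3)-1}. For the distances from $v_3$, I would invoke the rotational automorphism $\sigma\colon u_i\mapsto u_{i+3},\ v_i\mapsto v_{i+3}$ of $GP(n,3)$: then $d(v_3, w)=d(v_0, \sigma^{-1}(w))$ for every vertex $w$, so the tables already produced for $d(v_0,\cdot)$ yield $d(v_3,\cdot)$ after a uniform index shift by $-3$. In particular, no new shortest-path analysis is required, only careful reindexing.

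With both distance functions in hand, for each residue class $r$ I would partition $V(GP(n,3))$ into $W_{u_0 v_3}$, $W_{v_3 u_0}$, and the equidistant set, sum the first two sizes, and verify they differ. As in Proposition~\ref{P:GP(n,3)-1}, I expect the gap $\bigl||W_{u_0 v_3}|-|W_{v_3 u_0}|\bigr|$ to grow linearly in $m$ (where $n=6m+r$): the vertices $v_{3t}$ near $v_0$ should systematically favour one endpoint and those near $v_3$ the other, with the outer-cycle vertices $u_{3t+s}$ ($s\in\{0,1,2\}$) tipping the balance in the same direction as in Proposition~\ref{P:GP(n,3)-1}. This linear gap will easily separate the two counts for every admissible $m$.

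The main obstacle is bookkeeping rather than conceptual. Unlike $(u_0,v_0)$, the pair $(u_0,v_3)$ is not fixed by any nontrivial automorphism of $GP(n,3)$ when $n>16$: the only possible outer--inner swap would require $3^2\equiv\pm 1\pmod n$, which fails, and no reflection in the dihedral group fixes both $u_0$ and $v_3$ setwise. Consequently the reflection $i\mapsto -i$ that halved the work in Proposition~\ref{P:GP(n,3)-1} is unavailable, and each of the six congruence classes must be treated over the full set of $2n$ vertices. Extra care is needed at the ``antipodal'' indices near $i\approx n/2$ and $i\approx n/2+3$, where the shortest-path direction may switch, and the smallest cases $n\in\{17,\ldots,21\}$ may need a direct hand-check to confirm the general formulas.
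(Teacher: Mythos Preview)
Your plan is correct and mirrors the paper's approach: the paper uses the pair $(u_0, v_{-3})$ rather than your $(u_0, v_3)$ (the two are equivalent under the reflection $i\mapsto -i$) and carries out the same six-case analysis on $n\bmod 6$, obtaining $|W_{u_0 v_{-3}}| < |W_{v_{-3} u_0}|$ with a gap linear in $m$ exactly as you predicted. Your observation that the $v_3$-distances are a rotational shift of the already-tabulated $v_0$-distances is a tidy organizational shortcut, though the paper simply recomputes the distances from $v_{n-3}$ directly.
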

\begin{proof}
In $GP(n,3)$, $d(u_0,v_{-3})=2$ and we will prove that $|W_{u_0v_{-3}}|<|W_{v_{-3}u_0}|$.
Note that $v_{-3}=v_{n-3}$.

Firstly we consider vertices $v_{-1},v_{-2},u_{-1},u_{-2}$.

$d(u_0,v_{-1})=2$ and $d(v_{-3},v_{-1})=4$. $d(u_0,v_{-2})=d(v_{-3},v_{-2})=3$.
$d(u_0,u_{-1})=1$ and $d(v_{-3},u_{-1})=3$. $d(u_0,u_{-2})=d(v_{-3},u_{-2})=2$.
So $u_{-1},v_{-1}\in W_{u_0v_{-3}}$ and no vertex of $\{v_{-1},v_{-2},u_{-1},u_{-2}\}$ is in $W_{v_{-3}u_0}$.

Next we consider vertices $v_i$ where $0\le i<n-3$ and $u_j$ where $1\le j\le n-3$.
We divide the discussion into
the following six cases according to the size of $n$.

(1) When $n=6m$ where $m\ge 3$.

Note that $n-3=6m-3=3(2m-1)$.

$d(u_0,v_{3t})=d(v_{6m-3},v_{3t})=1+t$ when $0\le t\le m-1$.
$d(v_{6m-3},v_{3t})=2m-1-t$ and $d(u_0,v_{3t})>d(v_{6m-3},v_{3t})$ when $m\le t<2m-1$.
$d(u_0,v_{3t+1})=2+t$ and $d(u_0,v_{3t+1})<d(v_{6m-3},v_{3t+1})$ when $0\le t\le m-1$.
$d(u_0,v_{3t+1})=d(v_{6m-3},v_{3t+1})=2m-t+2$ when $m\le t< 2m-1$.
$d(u_0,v_{3t+2})=3+t$ and $d(u_0,v_{3t+2})<d(v_{6m-3},v_{3t+2})$ when $0\le t\le m-2$.
$d(u_0,v_{3t+2})=d(v_{6m-3},v_{3t+2})=2m-t+1$ when $m-1\le t< 2m-1$.

$d(u_0,u_{3t})=d(v_{6m-3},u_{3t})=2+t$ when $1\le t\le m-1$.
$d(v_{6m-3},u_{3t})=2m-t$ and $d(u_0,u_{3t})>d(v_{6m-3},u_{3t})$ when $m\le t\le 2m-1$.
$d(u_0,u_{1})=1$ and $d(v_{6m-3},u_{1})=2m+1$.
$d(u_0,u_{3t+1})=d(v_{6m-3},u_{3t+1})=3+t$ when $1\le t\le m-1$.
$d(v_{6m-3},u_{3t+1})=2m-t+1$ and $d(u_0,u_{3t+1})>d(v_{6m-3},u_{3t+1})$ when $m\le t< 2m-1$.
$d(u_0,u_{2})=2$ and $d(v_{6m-3},u_{2})=2m$.
$d(u_0,u_{3t+2})=d(v_{6m-3},u_{3t+2})=4+t$ when $1\le t\le m-2$.
$d(v_{6m-3},u_{3t+2})=2m-t$ and $d(u_0,u_{3t+2})>d(v_{6m-3},u_{3t+2})$ when $m-1\le t< 2m-1$.

Note that $u_0\in W_{u_0v_{6m-3}}$ and $v_{6m-3}\in W_{v_{6m-3}u_0}$.
Combined with the above discussion, $|W_{u_0v_{6m-3}}|=2m+4$
and $|W_{v_{6m-3}u_0}|=4m-1$. Because $m\ge 3$, $|W_{u_0v_{6m-3}}|<|W_{v_{6m-3}u_0}|$.

(2) When $n=6m+1$ where $m\ge 3$.

Note that $n-3=6m-2=3(2m-1)+1$.

$d(u_0,v_{3t})=d(v_{6m-2},v_{3t})=1+t$ when $0\le t\le m$.
$d(u_0,v_{3t})=d(v_{6m-2},v_{3t})=2m-t+2$ when $m+1\le t\le 2m-1$.
$d(u_0,v_{3t+1})=2+t$ and $d(u_0,v_{3t+1})<d(v_{6m-2},v_{3t+1})$ when $0\le t\le m-2$.
$d(v_{6m-2},v_{3t+1})=2m-t-1$ and $d(u_0,v_{3t+1})>d(v_{6m-2},v_{3t+1})$ when $m-1\le t< 2m-1$.
$d(u_0,v_{3t+2})=3+t$ and $d(u_0,v_{3t+2})<d(v_{6m-2},v_{3t+2})$ when $0\le t\le m-1$.
$d(u_0,v_{3t+2})=d(v_{6m-2},v_{3t+2})=2m-t+2$ when $m\le t< 2m-1$.

$d(u_0,u_{3t})=d(v_{6m-2},u_{3t})=2+t$ when $1\le t\le m-1$.
$d(v_{6m-2},u_{3t})=2m-t+1$ and $d(u_0,u_{3t})>d(v_{6m-2},u_{3t})$ when $m\le t\le 2m-1$.
$d(u_0,u_{1})=1$ and $d(v_{6m-2},u_{1})=2m$.
$d(u_0,u_{3t+1})=d(v_{6m-2},u_{3t+1})=3+t$ when $1\le t\le m-2$.
$d(v_{6m-2},u_{3t+1})=2m-t$ and $d(u_0,u_{3t+1})>d(v_{6m-2},u_{3t+1})$ when $m-1\le t\le 2m-1$.
$d(u_0,u_{2})=2$ and $d(v_{6m-2},u_{2})=2m+1$.
$d(u_0,u_{3t+2})=d(v_{6m-2},u_{3t+2})=4+t$ when $1\le t\le m-2$.
$d(v_{6m-2},u_{3t+2})=2m-t+1$ and $d(u_0,u_{3t+2})>d(v_{6m-2},u_{3t+2})$ when $m-1\le t< 2m-1$.

Note that $u_0\in W_{u_0v_{6m-2}}$ and $v_{6m-2}\in W_{v_{6m-2}u_0}$.
Combined with the above discussion, $|W_{u_0v_{6m-2}}|=2m+4$
and $|W_{v_{6m-2}u_0}|=4m+2$. Because $m\ge 3$, $|W_{u_0v_{6m-2}}|<|W_{v_{6m-2}u_0}|$.

(3) When $n=6m+2$ where $m\ge 3$.

Note that $n-3=6m-1=3(2m-1)+2$.

$d(u_0,v_{3t})=d(v_{6m-1},v_{3t})=1+t$ when $0\le t\le m+1$.
$d(u_0,v_{3t})=d(v_{6m-1},v_{3t})=2m-t+3$ when $m+2\le t\le 2m-1$.
$d(u_0,v_{3t+1})=2+t$ and $d(u_0,v_{3t+1})<d(v_{6m-1},v_{3t+1})$ when $0\le t\le m-1$.
$d(u_0,v_{3t+1})=d(v_{6m-1},v_{3t+1})=2m-t+2$ when $m\le t\le 2m-1$.
$d(u_0,v_{3t+2})=3+t$ and $d(u_0,v_{3t+2})<d(v_{6m-1},v_{3t+2})$ when $0\le t\le m-3$.
$d(u_0,v_{3(m-2)+2})=d(v_{6m-1},v_{3(m-2)+2})=m+1$.
$d(v_{6m-1},v_{3t+2})=2m-t-1$ and $d(u_0,v_{3t+2})>d(v_{6m-1},v_{3t+2})$ when $m-1\le t< 2m-1$.

$d(u_0,u_{3t})=d(v_{6m-1},u_{3t})=2+t$ when $1\le t\le m$.
$d(v_{6m-1},u_{3t})=2m-t+2$ and $d(u_0,u_{3t})>d(v_{6m-1},u_{3t})$ when $m+1\le t\le 2m-1$.
$d(u_0,u_{1})=1$ and $d(v_{6m-1},u_{1})=2m+1$.
$d(u_0,u_{3t+1})=d(v_{6m-1},u_{3t+1})=3+t$ when $1\le t\le m-1$.
$d(v_{6m-1},u_{3t+1})=2m-t+1$ and $d(u_0,u_{3t+1})>d(v_{6m-1},u_{3t+1})$ when $m\le t\le 2m-1$.
$d(u_0,u_{2})=2$ and $d(v_{6m-1},u_{2})=2m$.
$d(u_0,u_{3t+2})=d(v_{6m-1},u_{3t+2})=4+t$ when $1\le t\le m-2$.
$d(v_{6m-1},u_{3t+2})=2m-t$ and $d(u_0,u_{3t+2})>d(v_{6m-1},u_{3t+2})$ when $m-1\le t\le 2m-1$.

Note that $u_0\in W_{u_0v_{6m-1}}$ and $v_{6m-1}\in W_{v_{6m-1}u_0}$.
Combined with the above discussion, $|W_{u_0v_{6m-1}}|=2m+3$
and $|W_{v_{6m-1}u_0}|=4m+1$. Because $m\ge 3$, $|W_{u_0v_{6m-1}}|<|W_{v_{6m-1}u_0}|$.

(4) When $n=6m+3$ where $m\ge 3$.

Note that $n-3=6m=3\times 2m$.

$d(u_0,v_{3t})=d(v_{6m},v_{3t})=1+t$ when $0\le t\le m-1$.
$d(v_{6m},v_{3t})=2m-t$ and $d(u_0,v_{3t})>d(v_{6m},v_{3t})$ when $m\le t< 2m$.
$d(u_0,v_{3t+1})=2+t$ and $d(u_0,v_{3t+1})<d(v_{6m},v_{3t+1})$ when $0\le t\le m$.
$d(u_0,v_{3t+1})=d(v_{6m},v_{3t+1})=2m-t+3$ when $m+1\le t< 2m$.
$d(u_0,v_{3t+2})=3+t$ and $d(u_0,v_{3t+2})<d(v_{6m},v_{3t+2})$ when $0\le t\le m-1$.
$d(u_0,v_{3t+2})=d(v_{6m},v_{3t+2})=2m-t+2$ when $m\le t< 2m$.

$d(u_0,u_{3t})=d(v_{6m},u_{3t})=2+t$ when $1\le t\le m-1$.
$d(v_{6m},u_{3t})=2m-t+1$ and $d(u_0,u_{3t})>d(v_{6m},u_{3t})$ when $m\le t\le 2m$.
$d(u_0,u_{1})=1$ and $d(v_{6m},u_{1})=2m+2$.
$d(u_0,u_{3t+1})=d(v_{6m},u_{3t+1})=3+t$ when $1\le t\le m-1$.
$d(v_{6m},u_{3t+1})=2m-t+2$ and $d(u_0,u_{3t+1})>d(v_{6m},u_{3t+1})$ when $m\le t< 2m$.
$d(u_0,u_{2})=2$ and $d(v_{6m},u_{2})=2m+1$.
$d(u_0,u_{3t+2})=d(v_{6m},u_{3t+2})=4+t$ when $1\le t\le m-2$.
$d(v_{6m},u_{3t+2})=2m-t+1$ and $d(u_0,u_{3t+2})>d(v_{6m},u_{3t+2})$ when $m-1\le t< 2m$.

Note that $u_0\in W_{u_0v_{6m}}$ and $v_{6m}\in W_{v_{6m}u_0}$.
Combined with the above discussion, $|W_{u_0v_{6m}}|=2m+6$
and $|W_{v_{6m}u_0}|=4m+3$. Because $m\ge 3$, $|W_{u_0v_{6m}}|<|W_{v_{6m}u_0}|$.

(5) When $n=6m+4$ where $m\ge 3$.

Note that $n-3=6m+1=3\times 2m+1$.

$d(u_0,v_{3t})=d(v_{6m+1},v_{3t})=1+t$ when $0\le t\le m+1$.
$d(u_0,v_{3t})=d(v_{6m+1},v_{3t})=2m-t+3$ when $m+2\le t\le 2m$.
$d(u_0,v_{3t+1})=2+t$ and $d(u_0,v_{3t+1})<d(v_{6m+1},v_{3t+1})$ when $0\le t\le m-2$.
$d(u_0,v_{3(m-1)+1})=d(v_{6m+1},v_{3(m-1)+1})=m+1$.
$d(v_{6m+1},v_{3t+1})=2m-t$ and $d(u_0,v_{3t+1})>d(v_{6m+1},v_{3t+1})$ when $m\le t< 2m$.
$d(u_0,v_{3t+2})=3+t$ and $d(u_0,v_{3t+2})<d(v_{6m+1},v_{3t+2})$ when $0\le t\le m-1$.
$d(u_0,v_{3t+2})=d(v_{6m+1},v_{3t+2})=2m-t+3$ when $m\le t< 2m$.

$d(u_0,u_{3t})=d(v_{6m+1},u_{3t})=2+t$ when $1\le t\le m$.
$d(v_{6m+1},u_{3t})=2m-t+2$ and $d(u_0,u_{3t})>d(v_{6m+1},u_{3t})$ when $m+1\le t\le 2m$.
$d(u_0,u_{1})=1$ and $d(v_{6m+1},u_{1})=2m+1$.
$d(u_0,u_{3t+1})=d(v_{6m+1},u_{3t+1})=3+t$ when $1\le t\le m-1$.
$d(v_{6m+1},u_{3t+1})=2m-t+1$ and $d(u_0,u_{3t+1})>d(v_{6m+1},u_{3t+1})$ when $m\le t\le 2m$.
$d(u_0,u_{2})=2$ and $d(v_{6m},u_{2})=2m+2$.
$d(u_0,u_{3t+2})=d(v_{6m+1},u_{3t+2})=4+t$ when $1\le t\le m-1$.
$d(v_{6m+1},u_{3t+2})=2m-t+2$ and $d(u_0,u_{3t+2})>d(v_{6m+1},u_{3t+2})$ when $m\le t< 2m$.

Note that $u_0\in W_{u_0v_{6m+1}}$ and $v_{6m+1}\in W_{v_{6m+1}u_0}$.
Combined with the above discussion, $|W_{u_0v_{6m+1}}|=2m+4$
and $|W_{v_{6m+1}u_0}|=4m+2$. Because $m\ge 3$, $|W_{u_0v_{6m+1}}|<|W_{v_{6m+1}u_0}|$.

(6) When $n=6m+5$ where $m\ge 2$.

Note that $n-3=6m+2=3\times 2m+2$.

$d(u_0,v_{3t})=d(v_{6m+2},v_{3t})=1+t$ when $0\le t\le m+1$.
$d(u_0,v_{3t})=d(v_{6m+2},v_{3t})=2m-t+4$ when $m+2\le t\le 2m$.
$d(u_0,v_{3t+1})=2+t$ and $d(u_0,v_{3t+1})<d(v_{6m+2},v_{3t+1})$ when $0\le t\le m$.
$d(u_0,v_{3t+1})=d(v_{6m+2},v_{3t+1})=2m-t+3$ when $m+1\le t\le 2m$.
$d(u_0,v_{3t+2})=3+t$ and $d(u_0,v_{3t+2})<d(v_{6m+2},v_{3t+2})$ when $0\le t\le m-2$.
$d(v_{6m+2},v_{3t+2})=2m-t$ and $d(u_0,v_{3t+2})>d(v_{6m+1},v_{3t+2})$ when $m-1\le t< 2m$.

$d(u_0,u_{3t})=d(v_{6m+2},u_{3t})=2+t$ when $1\le t\le m$.
$d(v_{6m+2},u_{3t})=2m-t+3$ and $d(u_0,u_{3t})>d(v_{6m+2},u_{3t})$ when $m+1\le t\le 2m$.
$d(u_0,u_{1})=1$ and $d(v_{6m+2},u_{1})=2m+2$.
$d(u_0,u_{3t+1})=d(v_{6m+2},u_{3t+1})=3+t$ when $1\le t\le m-1$.
$d(v_{6m+2},u_{3t+1})=2m-t+2$ and $d(u_0,u_{3t+1})>d(v_{6m+2},u_{3t+1})$ when $m\le t\le 2m$.
$d(u_0,u_{2})=2$ and $d(v_{6m},u_{2})=2m+1$.
$d(u_0,u_{3t+2})=d(v_{6m+2},u_{3t+2})=4+t$ when $1\le t\le m-2$.
$d(v_{6m+2},u_{3t+2})=2m-t+1$ and $d(u_0,u_{3t+2})>d(v_{6m+2},u_{3t+2})$ when $m-1\le t\le 2m$.

Note that $u_0\in W_{u_0v_{6m+2}}$ and $v_{6m+2}\in W_{v_{6m+2}u_0}$.
Combined with the above discussion, $|W_{u_0v_{6m+2}}|=2m+5$
and $|W_{v_{6m+2}u_0}|=4m+5$. Because $m\ge 2$, $|W_{u_0v_{6m+2}}|<|W_{v_{6m+2}u_0}|$.

\end{proof}

\begin{proposition}\label{P:GP(n,3)-3}
For any $n>16$, the generalized Petersen graph $GP(n,3)$ is not $\ell$-distance-balanced for any $3\le \ell<\diam(GP(n,3))$.
\end{proposition}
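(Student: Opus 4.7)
The plan is to extend the strategy of Proposition~\ref{P:GP(n,3)-2} to larger $\ell$. For each $\ell$ with $3 \le \ell < \diam(GP(n,3))$, I would exhibit a specific pair of vertices at distance $\ell$ and show it is not balanced. Extrapolating from the witnesses $(u_0,v_0)$ and $(u_0,v_{-3})$ used for $\ell=1,2$, the natural candidate is $(u_0,v_{-3(\ell-1)})$: unwinding the shortest-path structure of $GP(n,3)$ (outer cycle plus inner $+3$ shortcuts) confirms that $d(u_0,v_{-3(\ell-1)})=\ell$ provided $\ell$ is not too close to the diameter, and the goal is to prove $|W_{u_0v_{-3(\ell-1)}}|<|W_{v_{-3(\ell-1)}u_0}|$. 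For $\ell$ near the diameter, where $v_{-3(\ell-1)}$ `wraps around' to a vertex closer than $\ell$, the candidate can be replaced by a neighbouring vertex on the inner or outer cycle still at distance exactly $\ell$ from $u_0$.

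To count the two $W$-sets I would reuse the distance tabulation from Propositions~\ref{P:GP(n,3)-1} and \ref{P:GP(n,3)-2}. The column $d(u_0,\cdot)$ is already written out there, and the column $d(v_{-3(\ell-1)},\cdot)$ is obtained from it via the rotational automorphism $i\mapsto i+3(\ell-1)$ of $GP(n,3)$ composed with swapping the two bipartition classes. Comparing the two columns termwise, stratified by $i\bmod 3$ and by $n\bmod 6$, determines to which $W$-set each vertex belongs (or to neither, when the two distances coincide).

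The overall argument would then proceed by the same six-case split on $n\bmod 6$ as in the earlier propositions. In each case the key object is the \emph{meeting point}: the index, within each residue class modulo $3$, where $d(u_0,\cdot)$ and $d(v_{-3(\ell-1)},\cdot)$ become equal, after which the closer endpoint switches from $u_0$ to $v_{-3(\ell-1)}$. Near the meeting point a small number of exceptional vertices must be enumerated by hand, analogous to the handling of $v_{-1},v_{-2},u_{-1},u_{-2}$ in Proposition~\ref{P:GP(n,3)-2}; away from it, the contributions to each $W$-set are linear expressions in $n$ and $\ell$ that can be summed in closed form.

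The main obstacle I anticipate is purely combinatorial: because the meeting point moves with $\ell$, each of the six $n\bmod 6$ cases splits further according to $\ell\bmod 3$ and according to whether $\ell$ is `small', `intermediate', or `near-diameter'. What keeps this manageable is that in every subcase the $v$-side carries a $\Theta(n)$ surplus, coming from the asymmetric fact that the inner cycle has a $+3$ shortcut while the outer cycle does not, whereas the corrections near the two endpoints and near the meeting point are only $O(1)$. Consequently the strict inequality $|W_{u_0v_{-3(\ell-1)}}|<|W_{v_{-3(\ell-1)}u_0}|$ persists for all $n>16$, with the $n=16$ boundary value matching the known exceptional behaviour of $GP(16,3)$ recorded at the start of Section~\ref{S:GP(n,3)}.
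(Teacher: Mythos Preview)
Your overall strategy---exhibit a pair $(u_0,v_j)$ at distance $\ell$ and show it is unbalanced---is exactly what the paper does, and your candidate $v_{-3(\ell-1)}$ (equivalently $v_{3(\ell-1)}$ via the reflection automorphism) is among the witnesses the paper uses. However, the mechanism you invoke for the imbalance is wrong, and this is a genuine gap, not just a missing detail.

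You assert that ``in every subcase the $v$-side carries a $\Theta(n)$ surplus'' while the boundary corrections are only $O(1)$, so strict inequality follows by domination. This extrapolation from $\ell=1,2$ fails for $\ell\ge 3$. Once $u_0$ and $v_j$ are no longer adjacent or nearly so, each sits one spoke away from the inner cycle, so the ``$+3$ shortcut'' advantage you cite is available to \emph{both} endpoints essentially symmetrically. Concretely, for $j=6$ and $n=6m$ one finds $|W_{u_0v_6}|=6m-1$ and $|W_{v_6u_0}|=6m+1$; more generally the paper's counts give $|W_{v_ju_0}|-|W_{u_0v_j}|\in\{2,3,4\}$ for every relevant $j$, independent of $n$. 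The surplus is therefore of the \emph{same} order as the corrections you intended to absorb into it, and your dominant-term argument collapses. Establishing the inequality requires tracking the exact constants, which is precisely the lengthy case analysis (by $j\bmod 3$ and the parity of $\lfloor j/3\rfloor$, with separate treatment of the smallest values of $s$) that the paper carries out.

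If you do pursue the full computation, one simplification in the paper is worth adopting: rather than stratifying by $n\bmod 6$, fix $j$ and split $W_{u_0v_j}$ into the portion $W^1$ supported on indices $0,\ldots,j$ and the portion $W^2$ on indices $j,\ldots,n$. The reflection $i\mapsto -i$ gives $|W^2_{u_0v_j}|=|W^1_{u_0v_{n-j}}|$, so it suffices to tabulate $|W^1_{u_0v_j}|$ and $|W^1_{v_ju_0}|$ for $6\le j\le n-6$; the parameter $n$ then enters only at the final recombination step.
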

\begin{proof}
For any $3\le\ell<D$, we first show that there exists $v_j$ such that $d(u_0,v_j)=\ell$ where $6\le j\le n/2$.
From \cite{MaG:2023}, there exists $j^*$ such that $d(u_0,u_{j^*})=D$.

When $n=6m$ $(m\ge 3)$ or $n=6m+1$ $(m\ge 3)$, from \cite{MaG:2023} we know that $j^*=3(m-1)+2$ and $D=d(u_0,u_{j^*})=m+3$.
Note that $d(u_0,v_{3s+2})=s+3$ where $2\le s\le m-1$ and $d(u_0,v_{3s})=s+1$ where $2\le s\le m$.

When $n=6m+2$ $(m\ge 3)$ or $n=6m+3$ $(m\ge 3)$, from \cite{MaG:2023} we know that $j^*=3m+1$ and $D=d(u_0,u_{j^*})=m+3$.
Note that $d(u_0,v_{3s+1})=s+2$ where $2\le s\le m$ and $d(u_0,v_{3s})=s+1$ where $2\le s\le m$.

When $n=6m+4$ $(m\ge 3)$, from \cite{MaG:2023} we know that $j^*=3m+2$ and $D=d(u_0,u_{j^*})=m+4$.
Note that $d(u_0,v_{3s+2})=s+3$ where $2\le s\le m$ and $d(u_0,v_{3s})=s+1$ where $2\le s\le m$.

When $n=6m+5$ $(m\ge 2)$, from \cite{MaG:2023} we know that $j^*=3m+1$ and $D=d(u_0,u_{j^*})=m+3$.
Note that $d(u_0,v_{3s+1})=s+2$ where $2\le s\le m$ and $d(u_0,v_{3s})=s+1$ where $2\le s\le m$.

From the above discussion, there exists $j$ where $6\le j\le n/2$ such that $d(u_0,v_j)=\ell$ for any $3\le\ell<D$.
Let $V_1=\{u_i\mid 1\le i\le j-1\}\cup\{v_i\mid 1\le i\le j-1\}$ and $V_2=\{u_i\mid j+1\le i\le n-1\}\cup\{v_i\mid j+1\le i\le n-1\}$.
Let $W^1_{u_0v_j}$ be the set of vertices which are in $W_{u_0v_j}$ and also in $V_1\cup\{u_0,v_0,u_j,v_j\}$.
Let $W^1_{v_ju_0}$ be the set of vertices which are in $W_{v_ju_0}$ and also in $V_1\cup\{u_0,v_0,u_j,v_j\}$.
Let $W^2_{u_0v_j}$ be the set of vertices which are in $W_{u_0v_j}$ and also in $V_2\cup\{u_0,v_0,u_j,v_j\}$.
Let $W^2_{v_ju_0}$ be the set of vertices which are in $W_{v_ju_0}$ and also in $V_2\cup\{u_0,v_0,u_j,v_j\}$.
Because $6\le j\le n/2$, $|W^2_{u_0v_j}|=|W^1_{u_0v_{n-j}}|$ and $|W^2_{v_ju_0}|=|W^1_{v_{n-j}u_0}|$. So
$|W_{u_0v_j}|=|W^1_{u_0v_j}|+|W^2_{u_0v_j}|-2=|W^1_{u_0v_j}|+|W^1_{u_0v_{n-j}}|-2$ and
$|W_{v_ju_0}|=|W^1_{v_ju_0}|+|W^2_{v_ju_0}|-2=|W^1_{v_ju_0}|+|W^1_{v_{n-j}u_0}|-2$.
In the following we will compute $|W^1_{u_0v_j}|$ and $|W^1_{v_ju_0}|$ where $6\le j\le n-6$.
The discussion is divided into the following six cases.

(1a) Computation of $|W^1_{u_0v_{3s}}|$ and $|W^1_{v_{3s}u_0}|$ when $s$ is odd and $s\ge 3$.

When $s=3$,

$d(u_0,v_0)=1$ and $d(v_9,v_0)=3$. $d(u_0,v_3)=d(v_9,v_3)=2$. $d(u_0,v_6)=3$ and $d(v_9,v_6)=1$.
$d(u_0,v_1)=2$ and $d(v_9,v_1)=6$. $d(u_0,v_4)=3$ and $d(v_9,v_4)=5$. $d(u_0,v_7)=4$ and $d(v_9,v_7)=4$.
$d(u_0,v_2)=3$ and $d(v_9,v_2)=5$. $d(u_0,v_5)=4$ and $d(v_9,v_5)=4$. $d(u_0,v_8)=5$ and $d(v_9,v_8)=3$.
So $v_0,v_1,v_4,v_2\in W^1_{u_0v_{9}}$ and $v_6,v_8\in W^1_{v_{9}u_0}$.

$d(u_0,u_3)=3$ and $d(v_9,u_3)=3$. $d(u_0,u_6)=4$ and $d(v_9,u_6)=2$. $d(u_0,u_9)=5$ and $d(v_9,u_9)=1$.
$d(u_0,u_1)=1$ and $d(v_9,u_1)=5$. $d(u_0,u_4)=4$ and $d(v_9,u_4)=4$. $d(u_0,u_7)=5$ and $d(v_9,u_7)=3$.
$d(u_0,u_2)=2$ and $d(v_9,u_2)=4$. $d(u_0,u_5)=5$ and $d(v_9,u_5)=3$. $d(u_0,u_8)=6$ and $d(v_9,u_8)=2$.
So $u_1,u_2\in W^1_{u_0v_{9}}$ and $u_6,u_9,u_7,u_5,u_8\in W^1_{v_{9}u_0}$.

Note that $u_0\in W^1_{u_0v_{9}}$ and $v_{9}\in W^1_{v_{9}u_0}$.
Combined with the above discussion, $|W^1_{u_0v_{9}}|=7$ and $|W^1_{v_{9}u_0}|=8$.

When $s\ge 5$,

$d(u_0,v_{3t})=1+t$ and $d(v_{3s},v_{3t})=s-t$ where $0\le t<s$.
When $0\le t<\frac{s-1}{2}$, $d(u_0,v_{3t})<d(v_{3s},v_{3t})$.
When $\frac{s-1}{2}<t<s$, $d(u_0,v_{3t})>d(v_{3s},v_{3t})$.
$d(u_0,v_{3t+1})=2+t$ and $d(v_{3s},v_{3t+1})=s-t+3$ where $0\le t<s$.
When $0\le t<\frac{s+1}{2}$, $d(u_0,v_{3t+1})<d(v_{3s},v_{3t+1})$.
When $\frac{s+1}{2}<t<s$, $d(u_0,v_{3t+1})>d(v_{3s},v_{3t+1})$.
$d(u_0,v_{3t+2})=3+t$ and $d(v_{3s},v_{3t+2})=s-t+2$ where $0\le t<s$.
When $0\le t<\frac{s-1}{2}$, $d(u_0,v_{3t+2})<d(v_{3s},v_{3t+2})$.
When $\frac{s-1}{2}<t<s$, $d(u_0,v_{3t+2})>d(v_{3s},v_{3t+2})$.

$d(u_0,u_{3t})=2+t$ and $d(v_{3s},u_{3t})=s-t+1$ where $1\le t\le s$.
When $1\le t<\frac{s-1}{2}$, $d(u_0,u_{3t})<d(v_{3s},u_{3t})$.
When $\frac{s-1}{2}<t\le s$, $d(u_0,u_{3t})>d(v_{3s},u_{3t})$.
$d(u_0,u_{1})=1$ and $d(v_{3s},u_{1})=s+2$.
$d(u_0,u_{3t+1})=3+t$ and $d(v_{3s},u_{3t+1})=s-t+2$ where $1\le t< s$.
When $1\le t<\frac{s-1}{2}$, $d(u_0,u_{3t+1})<d(v_{3s},u_{3t+1})$.
When $\frac{s-1}{2}<t< s$, $d(u_0,u_{3t+1})>d(v_{3s},u_{3t+1})$.
$d(u_0,u_{2})=2$ and $d(v_{3s},u_{2})=s+1$.
$d(u_0,u_{3t+2})=4+t$ and $d(v_{3s},u_{3t+2})=s-t+1$ where $1\le t< s$.
When $1\le t<\frac{s-3}{2}$, $d(u_0,u_{3t+2})<d(v_{3s},u_{3t+2})$.
When $\frac{s-3}{2}<t< s$, $d(u_0,u_{3t+2})>d(v_{3s},u_{3t+2})$.

Note that $u_0\in W^1_{u_0v_{3s}}$ and $v_{3s}\in W^1_{v_{3s}u_0}$.
Combined with the above discussion, $|W^1_{u_0v_{3s}}|=3s-3$ and $|W^1_{v_{3s}u_0}|=3s-1$.

(1b) Computation of $|W^1_{u_0v_{3s}}|$ and $|W^1_{v_{3s}u_0}|$ when $s$ is even and $s\ge 2$.

When $s=2$,

$d(u_0,v_0)=1$ and $d(v_6,v_0)=2$. $d(u_0,v_3)=2$ and $d(v_6,v_3)=1$.
$d(u_0,v_1)=2$ and $d(v_6,v_1)=5$. $d(u_0,v_4)=3$ and $d(v_6,v_4)=4$.
$d(u_0,v_2)=3$ and $d(v_6,v_2)=4$. $d(u_0,v_5)=4$ and $d(v_6,v_5)=3$.
So $v_0,v_1,v_2,v_4\in W^1_{u_0v_{6}}$ and $v_3,v_5\in W^1_{v_{6}u_0}$.

$d(u_0,u_3)=3$ and $d(v_6,u_3)=2$. $d(u_0,u_6)=4$ and $d(v_6,u_6)=1$.
$d(u_0,u_1)=1$ and $d(v_6,u_1)=4$. $d(u_0,u_4)=4$ and $d(v_6,u_4)=3$.
$d(u_0,u_2)=2$ and $d(v_6,u_2)=3$. $d(u_0,u_5)=5$ and $d(v_6,u_5)=2$.
So $u_1,u_2\in W^1_{u_0v_{6}}$ and $u_3,u_4,u_5,u_6\in W^1_{v_{6}u_0}$.

Note that $u_0\in W^1_{u_0v_{6}}$ and $v_{6}\in W^1_{v_{6}u_0}$.
Combined with the above discussion, $|W^1_{u_0v_{6}}|=7$ and $|W^1_{v_{6}u_0}|=7$.

When $s\ge 4$,

$d(u_0,v_{3t})=1+t$ and $d(v_{3s},v_{3t})=s-t$ where $0\le t<s$.
When $0\le t\le\frac{s-2}{2}$, $d(u_0,v_{3t})<d(v_{3s},v_{3t})$.
When $\frac{s}{2}\le t<s$, $d(u_0,v_{3t})>d(v_{3s},v_{3t})$.
$d(u_0,v_{3t+1})=2+t$ and $d(v_{3s},v_{3t+1})=s-t+3$ where $0\le t<s$.
When $0\le t\le\frac{s}{2}$, $d(u_0,v_{3t+1})<d(v_{3s},v_{3t+1})$.
When $\frac{s+2}{2}\le t<s$, $d(u_0,v_{3t+1})>d(v_{3s},v_{3t+1})$.
$d(u_0,v_{3t+2})=3+t$ and $d(v_{3s},v_{3t+2})=s-t+2$ where $0\le t<s$.
When $0\le t\le\frac{s-2}{2}$, $d(u_0,v_{3t+2})<d(v_{3s},v_{3t+2})$.
When $\frac{s}{2}\le t<s$, $d(u_0,v_{3t+2})>d(v_{3s},v_{3t+2})$.

$d(u_0,u_{3t})=2+t$ and $d(v_{3s},u_{3t})=s-t+1$ where $1\le t\le s$.
When $1\le t\le\frac{s-2}{2}$, $d(u_0,u_{3t})<d(v_{3s},u_{3t})$.
When $\frac{s}{2}\le t\le s$, $d(u_0,u_{3t})>d(v_{3s},u_{3t})$.
$d(u_0,u_{1})=1$ and $d(v_{3s},u_{1})=s+2$.
$d(u_0,u_{3t+1})=3+t$ and $d(v_{3s},u_{3t+1})=s-t+2$ where $1\le t< s$.
When $1\le t\le\frac{s-2}{2}$, $d(u_0,u_{3t+1})<d(v_{3s},u_{3t+1})$.
When $\frac{s}{2}\le t< s$, $d(u_0,u_{3t+1})>d(v_{3s},u_{3t+1})$.
$d(u_0,u_{2})=2$ and $d(v_{3s},u_{2})=s+1$.
$d(u_0,u_{3t+2})=4+t$ and $d(v_{3s},u_{3t+2})=s-t+1$ where $1\le t< s$.
When $1\le t\le\frac{s-4}{2}$, $d(u_0,u_{3t+2})<d(v_{3s},u_{3t+2})$.
When $\frac{s-2}{2}\le t< s$, $d(u_0,u_{3t+2})>d(v_{3s},u_{3t+2})$.

Note that $u_0\in W^1_{u_0v_{3s}}$ and $v_{3s}\in W^1_{v_{3s}u_0}$.
Combined with the above discussion, $|W^1_{u_0v_{3s}}|=3s$ and $|W^1_{v_{3s}u_0}|=3s+2$.

(2a) Computation of $|W^1_{u_0v_{3s+1}}|$ and $|W^1_{v_{3s+1}u_0}|$ when $s$ is odd and $s\ge 3$.

$d(u_0,v_{3t})=1+t$ and $d(v_{3s+1},v_{3t})=s-t+3$ where $0\le t\le s$.
When $0\le t\le\frac{s+1}{2}$, $d(u_0,v_{3t})<d(v_{3s+1},v_{3t})$.
When $\frac{s+3}{2}\le t\le s$, $d(u_0,v_{3t})>d(v_{3s+1},v_{3t})$.
$d(u_0,v_{3t+1})=2+t$ and $d(v_{3s+1},v_{3t+1})=s-t$ where $0\le t<s$.
When $0\le t\le\frac{s-3}{2}$, $d(u_0,v_{3t+1})<d(v_{3s+1},v_{3t+1})$.
When $\frac{s-1}{2}\le t<s$, $d(u_0,v_{3t+1})>d(v_{3s+1},v_{3t+1})$.
$d(u_0,v_{3t+2})=3+t$ and $d(v_{3s+1},v_{3t+2})=s-t+3$ where $0\le t<s$.
When $0\le t\le\frac{s-1}{2}$, $d(u_0,v_{3t+2})<d(v_{3s+1},v_{3t+2})$.
When $\frac{s+1}{2}\le t<s$, $d(u_0,v_{3t+2})>d(v_{3s+1},v_{3t+2})$.

$d(u_0,u_{3t})=2+t$ and $d(v_{3s+1},u_{3t})=s-t+2$ where $1\le t\le s$.
When $1\le t\le\frac{s-1}{2}$, $d(u_0,u_{3t})<d(v_{3s+1},u_{3t})$.
When $\frac{s+1}{2}\le t\le s$, $d(u_0,u_{3t})>d(v_{3s+1},u_{3t})$.
$d(u_0,u_{1})=1$ and $d(v_{3s+1},u_{1})=s+1$.
$d(u_0,u_{3t+1})=3+t$ and $d(v_{3s+1},u_{3t+1})=s-t+1$ where $1\le t\le s$.
When $1\le t\le\frac{s-3}{2}$, $d(u_0,u_{3t+1})<d(v_{3s+1},u_{3t+1})$.
When $\frac{s-1}{2}\le t\le s$, $d(u_0,u_{3t+1})>d(v_{3s+1},u_{3t+1})$.
$d(u_0,u_{2})=2$ and $d(v_{3s+1},u_{2})=s+2$.
$d(u_0,u_{3t+2})=4+t$ and $d(v_{3s+1},u_{3t+2})=s-t+2$ where $1\le t< s$.
When $1\le t\le\frac{s-3}{2}$, $d(u_0,u_{3t+2})<d(v_{3s+1},u_{3t+2})$.
When $\frac{s-1}{2}\le t< s$, $d(u_0,u_{3t+2})>d(v_{3s+1},u_{3t+2})$.

Note that $u_0\in W^1_{u_0v_{3s+1}}$ and $v_{3s+1}\in W^1_{v_{3s+1}u_0}$.
Combined with the above discussion, $|W^1_{u_0v_{3s+1}}|=3s+1$ and $|W^1_{v_{3s+1}u_0}|=3s+3$.

(2b) Computation of $|W^1_{u_0v_{3s+1}}|$ and $|W^1_{v_{3s+1}u_0}|$ when $s$ is even and $s\ge 2$.

When $s=2$.

$d(u_0,v_0)=1$ and $d(v_7,v_0)=5$. $d(u_0,v_3)=2$ and $d(v_7,v_3)=4$. $d(u_0,v_6)=3$ and $d(v_7,v_6)=3$.
$d(u_0,v_1)=2$ and $d(v_7,v_1)=2$. $d(u_0,v_4)=3$ and $d(v_7,v_4)=1$.
$d(u_0,v_2)=3$ and $d(v_7,v_2)=5$. $d(u_0,v_5)=4$ and $d(v_7,v_5)=4$.
So $v_0,v_2,v_3\in W^1_{u_0v_{7}}$ and $v_4\in W^1_{v_{7}u_0}$.

$d(u_0,u_3)=3$ and $d(v_7,u_3)=3$. $d(u_0,u_6)=4$ and $d(v_7,u_6)=2$.
$d(u_0,u_1)=1$ and $d(v_7,u_1)=3$. $d(u_0,u_4)=4$ and $d(v_7,u_4)=2$. $d(u_0,u_7)=5$ and $d(v_7,u_7)=1$.
$d(u_0,u_2)=2$ and $d(v_7,u_2)=4$. $d(u_0,u_5)=5$ and $d(v_7,u_5)=4$.
So $u_1,u_2\in W^1_{u_0v_{7}}$ and $u_4,u_5,u_6,u_7\in W^1_{v_{7}u_0}$.

Note that $u_0\in W^1_{u_0v_{7}}$ and $v_{7}\in W^1_{v_{7}u_0}$.
Combined with the above discussion, $|W^1_{u_0v_{7}}|=6$ and $|W^1_{v_{7}u_0}|=6$.

When $s\ge 4$.

$d(u_0,v_{3t})=1+t$ and $d(v_{3s+1},v_{3t})=s-t+3$ where $0\le t\le s$.
When $0\le t<\frac{s+2}{2}$, $d(u_0,v_{3t})<d(v_{3s+1},v_{3t})$.
When $\frac{s+2}{2}< t\le s$, $d(u_0,v_{3t})>d(v_{3s+1},v_{3t})$.
$d(u_0,v_{3t+1})=2+t$ and $d(v_{3s+1},v_{3t+1})=s-t$ where $0\le t<s$.
When $0\le t<\frac{s-2}{2}$, $d(u_0,v_{3t+1})<d(v_{3s+1},v_{3t+1})$.
When $\frac{s-2}{2}< t<s$, $d(u_0,v_{3t+1})>d(v_{3s+1},v_{3t+1})$.
$d(u_0,v_{3t+2})=3+t$ and $d(v_{3s+1},v_{3t+2})=s-t+3$ where $0\le t<s$.
When $0\le t<\frac{s}{2}$, $d(u_0,v_{3t+2})<d(v_{3s+1},v_{3t+2})$.
When $\frac{s}{2}< t<s$, $d(u_0,v_{3t+2})>d(v_{3s+1},v_{3t+2})$.

$d(u_0,u_{3t})=2+t$ and $d(v_{3s+1},u_{3t})=s-t+2$ where $1\le t\le s$.
When $1\le t<\frac{s}{2}$, $d(u_0,u_{3t})<d(v_{3s+1},u_{3t})$.
When $\frac{s}{2}< t\le s$, $d(u_0,u_{3t})>d(v_{3s+1},u_{3t})$.
$d(u_0,u_{1})=1$ and $d(v_{3s+1},u_{1})=s+1$.
$d(u_0,u_{3t+1})=3+t$ and $d(v_{3s+1},u_{3t+1})=s-t+1$ where $1\le t\le s$.
When $1\le t<\frac{s-2}{2}$, $d(u_0,u_{3t+1})<d(v_{3s+1},u_{3t+1})$.
When $\frac{s-2}{2}< t\le s$, $d(u_0,u_{3t+1})>d(v_{3s+1},u_{3t+1})$.
$d(u_0,u_{2})=2$ and $d(v_{3s+1},u_{2})=s+2$.
$d(u_0,u_{3t+2})=4+t$ and $d(v_{3s+1},u_{3t+2})=s-t+2$ where $1\le t< s$.
When $1\le t<\frac{s-2}{2}$, $d(u_0,u_{3t+2})<d(v_{3s+1},u_{3t+2})$.
When $\frac{s-2}{2}< t< s$, $d(u_0,u_{3t+2})>d(v_{3s+1},u_{3t+2})$.

Note that $u_0\in W^1_{u_0v_{3s+1}}$ and $v_{3s+1}\in W^1_{v_{3s+1}u_0}$.
Combined with the above discussion, $|W^1_{u_0v_{3s+1}}|=3s-2$ and $|W^1_{v_{3s+1}u_0}|=3s$.

(3a) Computation of $|W^1_{u_0v_{3s+2}}|$ and $|W^1_{v_{3s+2}u_0}|$ when $s$ is odd and $s\ge 3$.

When $s=3$.

$d(u_0,v_0)=1$ and $d(v_{11},v_0)=7$. $d(u_0,v_3)=2$ and $d(v_{11},v_3)=6$. $d(u_0,v_6)=3$ and $d(v_{11},v_6)=5$.
$d(u_0,v_9)=d(v_{11},v_9)=4$.
$d(u_0,v_1)=2$ and $d(v_{11},v_1)=6$. $d(u_0,v_4)=3$ and $d(v_{11},v_4)=5$. $d(u_0,v_7)=4$ and $d(v_{11},v_7)=4$.
$d(u_0,v_{10})=5$ and $d(v_{11},v_{10})=3$.
$d(u_0,v_2)=3$ and $d(v_{11},v_2)=3$. $d(u_0,v_5)=4$ and $d(v_{11},v_5)=2$. $d(u_0,v_8)=5$ and $d(v_{11},v_8)=1$.
So $v_0,v_1,v_3,v_4,v_6\in W^1_{u_0v_{11}}$ and $v_5,v_8,v_{10}\in W^1_{v_{11}u_0}$.

$d(u_0,u_3)=3$ and $d(v_{11},u_3)=5$. $d(u_0,u_6)=4$ and $d(v_{11},u_6)=4$. $d(u_0,u_9)=5$ and $d(v_{11},u_9)=3$.
$d(u_0,u_1)=1$ and $d(v_{11},u_1)=5$. $d(u_0,u_4)=4$ and $d(v_{11},u_4)=4$. $d(u_0,u_7)=5$ and $d(v_{11},u_7)=3$.
$d(u_0,u_{10})=6$ and $d(v_{11},u_{10})=2$.
$d(u_0,u_2)=2$ and $d(v_{11},u_2)=4$. $d(u_0,u_5)=5$ and $d(v_{11},u_5)=3$. $d(u_0,u_8)=6$ and $d(v_{11},u_8)=2$.
$d(u_0,u_{11})=7$ and $d(v_{11},u_{11})=1$.
So $u_1,u_2,u_3\in W^1_{u_0v_{11}}$ and $u_5,u_7,u_8,u_9,u_{10},u_{11}\in W^1_{v_{11}u_0}$.

Note that $u_0\in W^1_{u_0v_{11}}$ and $v_{11}\in W^1_{v_{11}u_0}$.
Combined with the above discussion, $|W^1_{u_0v_{11}}|=9$ and $|W^1_{v_{11}u_0}|=10$.

When $s\ge 5$.

$d(u_0,v_{3t})=1+t$ and $d(v_{3s+2},v_{3t})=s-t+4$ where $0\le t\le s$.
When $0\le t<\frac{s+3}{2}$, $d(u_0,v_{3t})<d(v_{3s+2},v_{3t})$.
When $\frac{s+3}{2}<t\le s$, $d(u_0,v_{3t})>d(v_{3s+2},v_{3t})$.
$d(u_0,v_{3t+1})=2+t$ and $d(v_{3s+2},v_{3t+1})=s-t+3$ where $0\le t\le s$.
When $0\le t<\frac{s+1}{2}$, $d(u_0,v_{3t+1})<d(v_{3s+2},v_{3t+1})$.
When $\frac{s+1}{2}<t\le s$, $d(u_0,v_{3t+1})>d(v_{3s+2},v_{3t+1})$.
$d(u_0,v_{3t+2})=3+t$ and $d(v_{3s+2},v_{3t+2})=s-t$ where $0\le t<s$.
When $0\le t<\frac{s-3}{2}$, $d(u_0,v_{3t+2})<d(v_{3s+2},v_{3t+2})$.
When $\frac{s-3}{2}<t<s$, $d(u_0,v_{3t+2})>d(v_{3s+2},v_{3t+2})$.

$d(u_0,u_{3t})=2+t$ and $d(v_{3s+2},u_{3t})=s-t+3$ where $1\le t\le s$.
When $1\le t<\frac{s+1}{2}$, $d(u_0,u_{3t})<d(v_{3s+2},u_{3t})$.
When $\frac{s+1}{2}<t\le s$, $d(u_0,u_{3t})>d(v_{3s+2},u_{3t})$.
$d(u_0,u_{1})=1$ and $d(v_{3s+2},u_{1})=s+2$.
$d(u_0,u_{3t+1})=3+t$ and $d(v_{3s+2},u_{3t+1})=s-t+2$ where $1\le t\le s$.
When $1\le t<\frac{s-1}{2}$, $d(u_0,u_{3t+1})<d(v_{3s+2},u_{3t+1})$.
When $\frac{s-1}{2}<t\le s$, $d(u_0,u_{3t+1})>d(v_{3s+2},u_{3t+1})$.
$d(u_0,u_{2})=2$ and $d(v_{3s+2},u_{2})=s+1$.
$d(u_0,u_{3t+2})=4+t$ and $d(v_{3s+2},u_{3t+2})=s-t+1$ where $1\le t\le s$.
When $1\le t<\frac{s-3}{2}$, $d(u_0,u_{3t+2})<d(v_{3s+2},u_{3t+2})$.
When $\frac{s-3}{2}<t\le s$, $d(u_0,u_{3t+2})>d(v_{3s+2},u_{3t+2})$.

Note that $u_0\in W^1_{u_0v_{3s+2}}$ and $v_{3s+2}\in W^1_{v_{3s+2}u_0}$.
Combined with the above discussion, $|W^1_{u_0v_{3s+2}}|=3s-1$ and $|W^1_{v_{3s+2}u_0}|=3s+1$.

(3b) Computation of $|W^1_{u_0v_{3s+2}}|$ and $|W^1_{v_{3s+2}u_0}|$ when $s$ is even and $s\ge 2$.

When $s=2$.

$d(u_0,v_0)=1$ and $d(v_8,v_0)=6$. $d(u_0,v_3)=2$ and $d(v_8,v_3)=5$. $d(u_0,v_6)=3$ and $d(v_8,v_6)=4$.
$d(u_0,v_1)=2$ and $d(v_8,v_1)=5$. $d(u_0,v_4)=3$ and $d(v_8,v_4)=4$. $d(u_0,v_7)=4$ and $d(v_8,v_7)=3$.
$d(u_0,v_2)=3$ and $d(v_8,v_2)=2$. $d(u_0,v_5)=4$ and $d(v_8,v_5)=1$.
So $v_0,v_1,v_3,v_4,v_6\in W^1_{u_0v_{8}}$ and $v_2,v_5,v_7\in W^1_{v_{8}u_0}$.

$d(u_0,u_3)=3$ and $d(v_8,u_3)=4$. $d(u_0,u_6)=4$ and $d(v_8,u_6)=3$.
$d(u_0,u_1)=1$ and $d(v_8,u_1)=4$. $d(u_0,u_4)=4$ and $d(v_8,u_4)=3$. $d(u_0,u_7)=5$ and $d(v_8,u_7)=2$.
$d(u_0,u_2)=2$ and $d(v_8,u_2)=3$. $d(u_0,u_5)=5$ and $d(v_8,u_5)=2$. $d(u_0,u_8)=6$ and $d(v_8,u_8)=1$.
So $u_1,u_2,u_3\in W^1_{u_0v_{8}}$ and $u_4,u_5,u_6,u_7,u_8\in W^1_{v_{8}u_0}$.

Note that $u_0\in W^1_{u_0v_{8}}$ and $v_{8}\in W^1_{v_{8}u_0}$.
Combined with the above discussion, $|W^1_{u_0v_{8}}|=9$ and $|W^1_{v_{8}u_0}|=9$.

When $s\ge 4$.

$d(u_0,v_{3t})=1+t$ and $d(v_{3s+2},v_{3t})=s-t+4$ where $0\le t\le s$.
When $0\le t\le\frac{s+2}{2}$, $d(u_0,v_{3t})<d(v_{3s+2},v_{3t})$.
When $\frac{s+4}{2}\le t\le s$, $d(u_0,v_{3t})>d(v_{3s+2},v_{3t})$.
$d(u_0,v_{3t+1})=2+t$ and $d(v_{3s+2},v_{3t+1})=s-t+3$ where $0\le t\le s$.
When $0\le t\le\frac{s}{2}$, $d(u_0,v_{3t+1})<d(v_{3s+2},v_{3t+1})$.
When $\frac{s+2}{2}\le t\le s$, $d(u_0,v_{3t+1})>d(v_{3s+2},v_{3t+1})$.
$d(u_0,v_{3t+2})=3+t$ and $d(v_{3s+2},v_{3t+2})=s-t$ where $0\le t<s$.
When $0\le t\le\frac{s-4}{2}$, $d(u_0,v_{3t+2})<d(v_{3s+2},v_{3t+2})$.
When $\frac{s-2}{2}\le t<s$, $d(u_0,v_{3t+2})>d(v_{3s+2},v_{3t+2})$.

$d(u_0,u_{3t})=2+t$ and $d(v_{3s+2},u_{3t})=s-t+3$ where $1\le t\le s$.
When $1\le t\le\frac{s}{2}$, $d(u_0,u_{3t})<d(v_{3s+2},u_{3t})$.
When $\frac{s+2}{2}\le t\le s$, $d(u_0,u_{3t})>d(v_{3s+2},u_{3t})$.
$d(u_0,u_{1})=1$ and $d(v_{3s+2},u_{1})=s+2$.
$d(u_0,u_{3t+1})=3+t$ and $d(v_{3s+2},u_{3t+1})=s-t+2$ where $1\le t\le s$.
When $1\le t\le\frac{s-2}{2}$, $d(u_0,u_{3t+1})<d(v_{3s+2},u_{3t+1})$.
When $\frac{s}{2}\le t\le s$, $d(u_0,u_{3t+1})>d(v_{3s+2},u_{3t+1})$.
$d(u_0,u_{2})=2$ and $d(v_{3s+2},u_{2})=s+1$.
$d(u_0,u_{3t+2})=4+t$ and $d(v_{3s+2},u_{3t+2})=s-t+1$ where $1\le t\le s$.
When $1\le t\le\frac{s-4}{2}$, $d(u_0,u_{3t+2})<d(v_{3s+2},u_{3t+2})$.
When $\frac{s-2}{2}\le t\le s$, $d(u_0,u_{3t+2})>d(v_{3s+2},u_{3t+2})$.

Note that $u_0\in W^1_{u_0v_{3s+2}}$ and $v_{3s+2}\in W^1_{v_{3s+2}u_0}$.
Combined with the above discussion, $|W^1_{u_0v_{3s+2}}|=3s+2$ and $|W^1_{v_{3s+2}u_0}|=3s+4$.

When $n\ge 17$, from the above computation of $|W^1_{u_0v_j}|$ and $|W^1_{v_ju_0}|$ where $6\le j\le n-6$,
for any $3\le \ell<D$,
we know that there exists $j$ where $d(u_0,v_j)=\ell$ and $6\le j\le n/2$ such that $|W_{u_0v_j}|<|W_{v_ju_0}|$.
The proof of the theorem completes.

\end{proof}

\section{The proof of Theorem~\ref{T:GP(n,4)-onlyD-DB}}\label{S:GP(n,4)}

From \cite{Miklavic:2018}, the diameter of $GP(24,4)$ is $6$ and $GP(24,4)$ is $\ell$-distance-balanced
if and only if $\ell\in\{1,6\}$.
So we can suppose $n> 24$ when we prove Theorem \ref{T:GP(n,4)-onlyD-DB}. We will prove Theorem \ref{T:GP(n,4)-onlyD-DB}
via Proposition \ref{P:GP(n,4)-1}, \ref{P:GP(n,4)-2} and \ref{P:GP(n,4)-3}.

\begin{proposition}\label{P:GP(n,4)-1}
For any $n>24$, the generalized Petersen graph $GP(n,4)$ is not $1$-distance-balanced.
\end{proposition}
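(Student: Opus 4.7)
The plan is to mimic the six-case argument of Proposition~\ref{P:GP(n,3)-1}. We take the adjacent pair $u_0, v_0$ (which satisfies $d(u_0,v_0)=1$) and prove the strict inequality $|W_{u_0 v_0}| < |W_{v_0 u_0}|$.

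First, I would exploit the involution $i \mapsto -i$ of $GP(n,4)$, which fixes both $u_0$ and $v_0$, to reduce the count to vertices $u_i, v_i$ with $1 \le i \le n/2$. For each such $w$ I would compare $d(u_0,w)$ with $d(v_0,w)$ and assign $w$ to $W_{u_0 v_0}$, to $W_{v_0 u_0}$, or to neither. The underlying heuristic is exactly the one implicit in the $k=3$ case: the inner cycle advances the index by $4$ per step, so the vertices $v_{4t}$ lie close to $v_0$ and serve as jump-off points from which the outer cycle can be accessed cheaply. Consequently the ``aligned'' inner vertices $v_{4t}$ and most outer vertices $u_i$ are pulled toward $v_0$, while only the few ``offset'' inner vertices $v_{4t+r}$ with $r \in \{1,2,3\}$ lying near the starting segment of the cycle are pulled toward $u_0$. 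This systematic imbalance is the source of the strict inequality.

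The case split would be on $n \bmod 8$, yielding eight cases $n = 8m, 8m+1, \ldots, 8m+7$; the period $2k = 8$ is the natural one governing the boundary behavior when $i$ approaches $n/2$. In each case I would write explicit distance formulas in $t$ with $i = 4t + r$, for $r \in \{0,1,2,3\}$, for both $u$- and $v$-vertices, and then tally the sizes of $W_{u_0 v_0}$ and $W_{v_0 u_0}$ just as in Proposition~\ref{P:GP(n,3)-1}. The tally should yield $|W_{u_0 v_0}|$ of order $\tfrac{n}{2}$ against $|W_{v_0 u_0}|$ of order $n$, with a positive gap for all $n > 24$.

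The main obstacle is obtaining the correct distance formulas. A shortest $u_0$-to-$v_j$ path generally uses two spokes and a mixture of inner and outer edges, and one must optimize the number of inner jumps (each worth $4$ in index but $1$ in length) against outer steps; for $k=4$ there are more candidate routes than for $k=3$, so the verification must handle several corner cases, both when $i$ is close to $n/2$ and when $\gcd(n,4) > 1$ so that the inner subgraph alone splits into two or four components and every inner detour is forced to pass through at least one spoke. Once these distance tables are pinned down, the cardinality count is a routine summation that confirms $|W_{v_0 u_0}| - |W_{u_0 v_0}| > 0$ in each of the eight residue classes for $n > 24$.
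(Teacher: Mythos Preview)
Your proposal is correct and matches the paper's approach exactly: the paper also takes the edge $u_0v_0$, reduces by the $i\mapsto -i$ symmetry to indices $1\le i\le n/2$, splits into the eight cases $n\equiv 0,1,\ldots,7\pmod 8$, and in each case tabulates $d(u_0,w)$ versus $d(v_0,w)$ for $w=u_{4t+r},v_{4t+r}$ with $r\in\{0,1,2,3\}$ to obtain explicit counts showing $|W_{u_0v_0}|<|W_{v_0u_0}|$. One small correction to your heuristic: the actual tallies come out to roughly $\tfrac{3n}{4}$ versus $\tfrac{5n}{4}$ rather than $\tfrac{n}{2}$ versus $n$, and the $\gcd(n,4)>1$ worry turns out not to require any separate treatment, since the distance formulas go through uniformly.
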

\begin{proof}
In $GP(n,4)$, $d(u_0,v_0)=1$ and we will prove that $|W_{u_0v_{0}}|<|W_{v_{0}u_0}|$. We divide the discussion into
the following eight cases according to the size of $n$.

(1) When $n=8m$ where $m\ge 4$.

By symmetry, we just need to consider vertices $u_i$ and $v_i$ where $1\le i\le \frac{n}{2}$.

$d(u_0,v_{4t})=1+t$ and $d(v_0,v_{4t})=t$ where $1\le t\le m$.
$d(u_0,v_{4t+1})=2+t$ and $d(v_0,v_{4t+1})=3+t$ where $0\le t< m$.
$d(u_0,v_{4t+2})=3+t$ and $d(v_0,v_{4t+2})=4+t$ where $0\le t< m$.
$d(u_0,v_{4t+3})=3+t$ and $d(v_0,v_{4t+3})=4+t$ where $0\le t< m$.

$d(u_0,u_{4t})=2+t$ and $d(v_0,u_{4t})=1+t$ where $1\le t\le m$.
$d(u_0,u_{1})=1$ and $d(v_0,u_{1})=2$.
$d(u_0,u_{4t+1})=3+t$ and $d(v_0,u_{4t+1})=2+t$ where $1\le t< m$.
$d(u_0,u_{2})=2$ and $d(v_0,u_{2})=3$.
$d(u_0,u_{4t+2})=4+t$ and $d(v_0,u_{4t+2})=3+t$ where $1\le t< m$.
$d(u_0,u_{3})=3$ and $d(v_0,u_{3})=3$.
$d(u_0,u_{4t+3})=4+t$ and $d(v_0,u_{4t+3})=3+t$ where $1\le t< m$.

Note that $u_0\in W_{u_0v_{0}}$ and $v_{0}\in W_{v_{0}u_0}$.
Combined with the above discussion, $|W_{u_0v_{0}}|=2(3m+2)+1=6m+5$
and $|W_{v_{0}u_0}|=2(5m-5)+3=10m-7$. Because $m\ge 4$, $|W_{u_0v_{0}}|<|W_{v_{0}u_0}|$.

(2) When $n=8m+1$ where $m\ge 3$.

By symmetry, we just need to consider vertices $u_i$ and $v_i$ where $1\le i\le \frac{n}{2}$.

$d(u_0,v_{4t})=1+t$ and $d(v_0,v_{4t})=t$ where $1\le t\le m$.
$d(u_0,v_{4t+1})=2+t$ and $d(v_0,v_{4t+1})=3+t$ where $0\le t\le m-2$.
$d(u_0,v_{4(m-1)+1})=d(v_0,v_{4(m-1)+1})=m+1$.
$d(u_0,v_{4t+2})=3+t$ and $d(v_0,v_{4t+2})=4+t$ where $0\le t< m$.
$d(u_0,v_{4t+3})=3+t$ and $d(v_0,v_{4t+3})=4+t$ where $0\le t< m$.

$d(u_0,u_{4t})=2+t$ and $d(v_0,u_{4t})=1+t$ where $1\le t\le m$.
$d(u_0,u_{1})=1$ and $d(v_0,u_{1})=2$.
$d(u_0,u_{4t+1})=3+t$ and $d(v_0,u_{4t+1})=2+t$ where $1\le t< m$.
$d(u_0,u_{2})=2$ and $d(v_0,u_{2})=3$.
$d(u_0,u_{4t+2})=4+t$ and $d(v_0,u_{4t+2})=3+t$ where $1\le t< m$.
$d(u_0,u_{3})=3$ and $d(v_0,u_{3})=3$.
$d(u_0,u_{4t+3})=4+t$ and $d(v_0,u_{4t+3})=3+t$ where $1\le t< m$.

Note that $u_0\in W_{u_0v_{0}}$ and $v_{0}\in W_{v_{0}u_0}$.
Combined with the above discussion, $|W_{u_0v_{0}}|=2(3m+1)+1=6m+3$
and $|W_{v_{0}u_0}|=2(5m-3)+1=10m-5$. Because $m\ge 3$, $|W_{u_0v_{0}}|<|W_{v_{0}u_0}|$.

(3) When $n=8m+2$ where $m\ge 3$.

By symmetry, we just need to consider vertices $u_i$ and $v_i$ where $1\le i\le \frac{n}{2}$.

$d(u_0,v_{4t})=1+t$ and $d(v_0,v_{4t})=t$ where $1\le t\le m$.
$d(u_0,v_{4t+1})=2+t$ and $d(v_0,v_{4t+1})=3+t$ where $0\le t\le m$.
$d(u_0,v_{4t+2})=3+t$ and $d(v_0,v_{4t+2})=4+t$ where $0\le t\le m-2$.
$d(u_0,v_{4(m-1)+2})=m+2$ and $d(v_0,v_{4(m-1)+2})=m+1$.
$d(u_0,v_{4t+3})=3+t$ and $d(v_0,v_{4t+3})=4+t$ where $0\le t< m$.

$d(u_0,u_{4t})=2+t$ and $d(v_0,u_{4t})=1+t$ where $1\le t\le m$.
$d(u_0,u_{1})=1$ and $d(v_0,u_{1})=2$.
$d(u_0,u_{4t+1})=3+t$ and $d(v_0,u_{4t+1})=2+t$ where $1\le t\le m$.
$d(u_0,u_{2})=2$ and $d(v_0,u_{2})=3$.
$d(u_0,u_{4t+2})=4+t$ and $d(v_0,u_{4t+2})=3+t$ where $1\le t< m$.
$d(u_0,u_{3})=3$ and $d(v_0,u_{3})=3$.
$d(u_0,u_{4t+3})=4+t$ and $d(v_0,u_{4t+3})=3+t$ where $1\le t< m$.

Note that $u_0\in W_{u_0v_{0}}$ and $v_{0}\in W_{v_{0}u_0}$.
Combined with the above discussion, $|W_{u_0v_{0}}|=2(3m+1)+2=6m+4$
and $|W_{v_{0}u_0}|=2(5m-2)+2=10m-2$. Because $m\ge 3$, $|W_{u_0v_{0}}|<|W_{v_{0}u_0}|$.

(4) When $n=8m+3$ where $m\ge 3$.

By symmetry, we just need to consider vertices $u_i$ and $v_i$ where $1\le i\le \frac{n}{2}$.

$d(u_0,v_{4t})=1+t$ and $d(v_0,v_{4t})=t$ where $1\le t\le m$.
$d(u_0,v_{4t+1})=2+t$ and $d(v_0,v_{4t+1})=3+t$ where $0\le t\le m$.
$d(u_0,v_{4t+2})=3+t$ and $d(v_0,v_{4t+2})=4+t$ where $0\le t< m$.
$d(u_0,v_{4t+3})=3+t$ and $d(v_0,v_{4t+3})=4+t$ where $0\le t\le m-2$.
$d(u_0,v_{4(m-1)+3})=m+2$ and $d(v_0,v_{4(m-1)+3})=m+1$.

$d(u_0,u_{4t})=2+t$ and $d(v_0,u_{4t})=1+t$ where $1\le t\le m$.
$d(u_0,u_{1})=1$ and $d(v_0,u_{1})=2$.
$d(u_0,u_{4t+1})=3+t$ and $d(v_0,u_{4t+1})=2+t$ where $1\le t\le m$.
$d(u_0,u_{2})=2$ and $d(v_0,u_{2})=3$.
$d(u_0,u_{4t+2})=4+t$ and $d(v_0,u_{4t+2})=3+t$ where $1\le t< m$.
$d(u_0,u_{3})=3$ and $d(v_0,u_{3})=3$.
$d(u_0,u_{4t+3})=4+t$ and $d(v_0,u_{4t+3})=3+t$ where $1\le t< m$.

Note that $u_0\in W_{u_0v_{0}}$ and $v_{0}\in W_{v_{0}u_0}$.
Combined with the above discussion, $|W_{u_0v_{0}}|=2(3m+2)+1=6m+5$
and $|W_{v_{0}u_0}|=2(5m-1)+1=10m-1$. Because $m\ge 3$, $|W_{u_0v_{0}}|<|W_{v_{0}u_0}|$.

(5) When $n=8m+4$ where $m\ge 3$.

By symmetry, we just need to consider vertices $u_i$ and $v_i$ where $1\le i\le \frac{n}{2}$.

$d(u_0,v_{4t})=1+t$ and $d(v_0,v_{4t})=t$ where $1\le t\le m$.
$d(u_0,v_{4t+1})=2+t$ and $d(v_0,v_{4t+1})=3+t$ where $0\le t\le m$.
$d(u_0,v_{4t+2})=3+t$ and $d(v_0,v_{4t+2})=4+t$ where $0\le t\le m$.
$d(u_0,v_{4t+3})=3+t$ and $d(v_0,v_{4t+3})=4+t$ where $0\le t< m$.

$d(u_0,u_{4t})=2+t$ and $d(v_0,u_{4t})=1+t$ where $1\le t\le m$.
$d(u_0,u_{1})=1$ and $d(v_0,u_{1})=2$.
$d(u_0,u_{4t+1})=3+t$ and $d(v_0,u_{4t+1})=2+t$ where $1\le t\le m$.
$d(u_0,u_{2})=2$ and $d(v_0,u_{2})=3$.
$d(u_0,u_{4t+2})=4+t$ and $d(v_0,u_{4t+2})=3+t$ where $1\le t\le m$.
$d(u_0,u_{3})=3$ and $d(v_0,u_{3})=3$.
$d(u_0,u_{4t+3})=4+t$ and $d(v_0,u_{4t+3})=3+t$ where $1\le t< m$.

Note that $u_0\in W_{u_0v_{0}}$ and $v_{0}\in W_{v_{0}u_0}$.
Combined with the above discussion, $|W_{u_0v_{0}}|=2(3m+3)+2=6m+8$
and $|W_{v_{0}u_0}|=2(5m-2)+2=10m-2$. Because $m\ge 3$, $|W_{u_0v_{0}}|<|W_{v_{0}u_0}|$.

(6) When $n=8m+5$ where $m\ge 3$.

By symmetry, we just need to consider vertices $u_i$ and $v_i$ where $1\le i\le \frac{n}{2}$.

$d(u_0,v_{4t})=1+t$ and $d(v_0,v_{4t})=t$ where $1\le t\le m$.
$d(u_0,v_{4t+1})=2+t$ and $d(v_0,v_{4t+1})=3+t$ where $0\le t\le m-1$.
$d(u_0,v_{4m+1})=m+2$ and $d(v_0,v_{4m+1})=m+1$.
$d(u_0,v_{4t+2})=3+t$ and $d(v_0,v_{4t+2})=4+t$ where $0\le t\le m$.
$d(u_0,v_{4t+3})=3+t$ and $d(v_0,v_{4t+3})=4+t$ where $0\le t< m$.

$d(u_0,u_{4t})=2+t$ and $d(v_0,u_{4t})=1+t$ where $1\le t\le m$.
$d(u_0,u_{1})=1$ and $d(v_0,u_{1})=2$.
$d(u_0,u_{4t+1})=3+t$ and $d(v_0,u_{4t+1})=2+t$ where $1\le t\le m$.
$d(u_0,u_{2})=2$ and $d(v_0,u_{2})=3$.
$d(u_0,u_{4t+2})=4+t$ and $d(v_0,u_{4t+2})=3+t$ where $1\le t\le m$.
$d(u_0,u_{3})=3$ and $d(v_0,u_{3})=3$.
$d(u_0,u_{4t+3})=4+t$ and $d(v_0,u_{4t+3})=3+t$ where $1\le t< m$.

Note that $u_0\in W_{u_0v_{0}}$ and $v_{0}\in W_{v_{0}u_0}$.
Combined with the above discussion, $|W_{u_0v_{0}}|=2(3m+3)+1=6m+7$
and $|W_{v_{0}u_0}|=2\times 5m+1=10m+1$. Because $m\ge 3$, $|W_{u_0v_{0}}|<|W_{v_{0}u_0}|$.

(7) When $n=8m+6$ where $m\ge 3$.

By symmetry, we just need to consider vertices $u_i$ and $v_i$ where $1\le i\le \frac{n}{2}$.

$d(u_0,v_{4t})=1+t$ and $d(v_0,v_{4t})=t$ where $1\le t\le m$.
$d(u_0,v_{4t+1})=2+t$ and $d(v_0,v_{4t+1})=3+t$ where $0\le t\le m$.
$d(u_0,v_{4t+2})=3+t$ and $d(v_0,v_{4t+2})=4+t$ where $0\le t\le m-2$.
$d(u_0,v_{4(m-1)+2})=d(v_0,v_{4(m-1)+2})=m+2$.
$d(u_0,v_{4m+2})=m+2$ and $d(v_0,v_{4m+2})=m+1$.
$d(u_0,v_{4t+3})=3+t$ and $d(v_0,v_{4t+3})=4+t$ where $0\le t\le m$.

$d(u_0,u_{4t})=2+t$ and $d(v_0,u_{4t})=1+t$ where $1\le t\le m$.
$d(u_0,u_{1})=1$ and $d(v_0,u_{1})=2$.
$d(u_0,u_{4t+1})=3+t$ and $d(v_0,u_{4t+1})=2+t$ where $1\le t\le m$.
$d(u_0,u_{2})=2$ and $d(v_0,u_{2})=3$.
$d(u_0,u_{4t+2})=4+t$ and $d(v_0,u_{4t+2})=3+t$ where $1\le t\le m-1$.
$d(u_0,u_{4m+2})=m+3$ and $d(v_0,u_{4m+2})=m+2$.
$d(u_0,u_{3})=3$ and $d(v_0,u_{3})=3$.
$d(u_0,u_{4t+3})=4+t$ and $d(v_0,u_{4t+3})=3+t$ where $1\le t\le m$.

Note that $u_0\in W_{u_0v_{0}}$ and $v_{0}\in W_{v_{0}u_0}$.
Combined with the above discussion, $|W_{u_0v_{0}}|=2(3m+2)+2=6m+6$
and $|W_{v_{0}u_0}|=2\times 5m+2=10m+2$. Because $m\ge 3$, $|W_{u_0v_{0}}|<|W_{v_{0}u_0}|$.

(8) When $n=8m+7$ where $m\ge 3$.

By symmetry, we just need to consider vertices $u_i$ and $v_i$ where $1\le i\le \frac{n}{2}$.

$d(u_0,v_{4t})=1+t$ and $d(v_0,v_{4t})=t$ where $1\le t\le m$.
$d(u_0,v_{4t+1})=2+t$ and $d(v_0,v_{4t+1})=3+t$ where $0\le t\le m$.
$d(u_0,v_{4t+2})=3+t$ and $d(v_0,v_{4t+2})=4+t$ where $0\le t\le m$.
$d(u_0,v_{4t+3})=3+t$ and $d(v_0,v_{4t+3})=4+t$ where $0\le t\le m-2$.
$d(u_0,v_{4(m-1)+3})=d(v_0,v_{4(m-1)+3})=m+2$.
$d(u_0,v_{4m+3})=m+2$ and $d(v_0,v_{4m+3})=m+1$.

$d(u_0,u_{4t})=2+t$ and $d(v_0,u_{4t})=1+t$ where $1\le t\le m$.
$d(u_0,u_{1})=1$ and $d(v_0,u_{1})=2$.
$d(u_0,u_{4t+1})=3+t$ and $d(v_0,u_{4t+1})=2+t$ where $1\le t\le m$.
$d(u_0,u_{2})=2$ and $d(v_0,u_{2})=3$.
$d(u_0,u_{4t+2})=4+t$ and $d(v_0,u_{4t+2})=3+t$ where $1\le t\le m$.
$d(u_0,u_{3})=3$ and $d(v_0,u_{3})=3$.
$d(u_0,u_{4t+3})=4+t$ and $d(v_0,u_{4t+3})=3+t$ where $1\le t\le m-1$.
$d(u_0,u_{4m+3})=m+3$ and $d(v_0,u_{4m+3})=m+2$.

Note that $u_0\in W_{u_0v_{0}}$ and $v_{0}\in W_{v_{0}u_0}$.
Combined with the above discussion, $|W_{u_0v_{0}}|=2(3m+3)+1=6m+7$
and $|W_{v_{0}u_0}|=2(5m+1)+1=10m+3$. Because $m\ge 3$, $|W_{u_0v_{0}}|<|W_{v_{0}u_0}|$.

\end{proof}

\begin{proposition}\label{P:GP(n,4)-2}
For any $n>24$, the generalized Petersen graph $GP(n,4)$ is not $2$-distance-balanced.
\end{proposition}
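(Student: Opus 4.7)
The plan is to adapt the strategy used for $GP(n,3)$ in Proposition~\ref{P:GP(n,3)-2}. I would pick the pair $u_0$ and $v_{n-4}$, which satisfy $d(u_0,v_{n-4})=2$ via the path $u_0\,v_0\,v_{n-4}$ (using the inner edge $v_{n-4}v_0$, since $(n-4)+4\equiv 0\pmod{n}$), and show that $|W_{u_0 v_{n-4}}|<|W_{v_{n-4} u_0}|$.

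First I would deal separately with the ``exceptional'' vertices close to both endpoints, namely $v_{-1},v_{-2},v_{-3},u_{-1},u_{-2},u_{-3}$, by computing the six pairs of distances $d(u_0,\cdot)$ and $d(v_{n-4},\cdot)$ directly in the subgraph induced by a small neighborhood (this is independent of $n$ once $n$ is large enough to avoid wrap-around). For the remaining vertices $v_i$ with $0\le i<n-4$ and $u_j$ with $1\le j\le n-4$, I would parameterize the index as $4t$, $4t+1$, $4t+2$, or $4t+3$, and split the argument into eight cases according to $n\bmod 8$, just as Proposition~\ref{P:GP(n,4)-1} does; the period $8$ reflects the $2k$-periodicity of the distance function in $GP(n,k)$ for $k=4$.

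Within each of the eight cases I would invoke the explicit distance formulas of \cite{MaG:2023} for $GP(n,4)$ to write down $d(u_0,v_{4t+r})$ and $d(v_{n-4},v_{4t+r})$ (and the analogous expressions for the outer $u_j$), then determine for which range of $t$ the vertex lies in $W_{u_0 v_{n-4}}$, in $W_{v_{n-4}u_0}$, or in neither (the ``tie'' set, which typically collects the vertices at equal distance from both endpoints on the perpendicular bisector). Adding the contributions of the four residue classes, together with the six exceptional vertices and the endpoints $u_0,v_{n-4}$ themselves, gives closed-form expressions for $|W_{u_0 v_{n-4}}|$ and $|W_{v_{n-4} u_0}|$ as affine functions of $m$, where $n=8m+r$.

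The main obstacle, as in Proposition~\ref{P:GP(n,3)-2}, will not be any conceptual difficulty but the sheer bookkeeping: handling the boundary vertices in each residue class where a generic formula breaks by one unit, keeping careful track of which residues of $t$ produce equality versus strict inequality, and verifying in all eight residues of $n\bmod 8$ that the resulting comparison of the two affine functions $|W_{u_0 v_{n-4}}|$ and $|W_{v_{n-4} u_0}|$ is strict as soon as $n>24$. I would organize the case $n=24$ (which is known to be $2$-distance-balanced by \cite{Miklavic:2018}) as the sharp boundary, so that each formula I derive, evaluated at $m$ corresponding to $n=24$, produces equality, and for $n>24$ the strict inequality $|W_{u_0v_{n-4}}|<|W_{v_{n-4}u_0}|$ follows from the linear growth rates of the two sides, yielding non-$2$-distance-balancedness.
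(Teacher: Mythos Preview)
Your plan is essentially identical to the paper's own proof: it picks the pair $u_0,v_{n-4}$, treats the six exceptional vertices $v_{-1},v_{-2},v_{-3},u_{-1},u_{-2},u_{-3}$ by hand, then splits the remaining vertices into residue classes $4t+r$ and into eight cases according to $n\bmod 8$, obtaining affine functions of $m$ for $|W_{u_0v_{n-4}}|$ and $|W_{v_{n-4}u_0}|$ and checking the strict inequality in each case.

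One factual correction to your sanity-check heuristic: by \cite{Miklavic:2018}, $GP(24,4)$ is $\ell$-distance-balanced only for $\ell\in\{1,6\}$, so it is \emph{not} $2$-distance-balanced, and you should not expect the two affine expressions to coincide at $n=24$. Indeed, in the paper's case $n=8m$ the comparison is $3m+7$ versus $5m$, which crosses between $m=3$ and $m=4$ (so the inequality first holds at $n=32$, not $n=25$); the threshold $24$ in Theorem~\ref{T:GP(n,4)-onlyD-DB} comes from the $\ell=1$ case, not $\ell=2$. This does not affect the correctness of your approach, only your proposed cross-check.
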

\begin{proof}

In $GP(n,4)$, $d(u_0,v_{-4})=2$ and we will prove that $|W_{u_0v_{-4}}|<|W_{v_{-4}u_0}|$.
Note that $v_{-4}=v_{n-4}$.

Firstly we consider vertices $v_{-1},v_{-2},v_{-3},u_{-1},u_{-2},u_{-3}$.

$d(u_0,v_{-1})=2$ and $d(v_{-4},v_{-1})=4$. $d(u_0,v_{-2})=3$ and $d(v_{-4},v_{-2})=4$. $d(u_0,v_{-3})=d(v_{-4},v_{-3})=3$.
$d(u_0,u_{-1})=1$ and $d(v_{-4},u_{-1})=3$. $d(u_0,u_{-2})=2$ and $d(v_{-4},u_{-2})=3$. $d(u_0,u_{-3})=3$ and $d(v_{-4},u_{-3})=2$.
So $u_{-1},u_{-2},v_{-1},v_{-2}\in W_{u_0v_{-4}}$ and $u_{-3}\in W_{v_{-4}u_0}$.

Next we consider vertices $v_i$ where $0\le i<n-4$ and $u_j$ where $1\le j\le n-4$.
We divide the discussion into
the following eight cases according to the size of $n$.

(1) When $n=8m$ where $m\ge 4$.

Note that $n-4=8m-4=4(2m-1)$.

$d(u_0,v_{4t})=d(v_{8m-4},v_{4t})=1+t$ when $0\le t\le m-1$.
$d(v_{8m-4},v_{4t})=2m-t-1$ and $d(u_0,v_{4t})>d(v_{8m-4},v_{4t})$ when $m\le t<2m-1$.
$d(u_0,v_{4t+1})=2+t$ and $d(u_0,v_{4t+1})<d(v_{8m-4},v_{4t+1})$ when $0\le t\le m-1$.
$d(u_0,v_{4t+1})=d(v_{8m-4},v_{4t+1})=2m-t+2$ when $m\le t< 2m-1$.
$d(u_0,v_{4t+2})=3+t$ and $d(u_0,v_{4t+2})<d(v_{8m-4},v_{4t+2})$ when $0\le t\le m-1$.
$d(u_0,v_{4t+2})=d(v_{8m-4},v_{4t+2})=2m-t+2$ when $m\le t< 2m-1$.
$d(u_0,v_{4t+3})=3+t$ and $d(u_0,v_{4t+3})<d(v_{8m-4},v_{4t+3})$ when $0\le t\le m-2$.
$d(u_0,v_{4t+3})=d(v_{8m-4},v_{4t+3})=2m-t+1$ when $m-1\le t< 2m-1$.

$d(u_0,u_{4t})=d(v_{8m-4},u_{4t})=2+t$ when $1\le t\le m-1$.
$d(v_{8m-4},u_{4t})=2m-t$ and $d(u_0,u_{4t})>d(v_{8m-4},u_{4t})$ when $m\le t\le 2m-1$.
$d(u_0,u_{1})=1$ and $d(v_{8m-4},u_{1})=2m+1$.
$d(u_0,u_{4t+1})=d(v_{8m-4},u_{4t+1})=3+t$ when $1\le t\le m-1$.
$d(v_{8m-4},u_{4t+1})=2m-t+1$ and $d(u_0,u_{4t+1})>d(v_{8m-4},u_{4t+1})$ when $m\le t< 2m-1$.
$d(u_0,u_{2})=2$ and $d(v_{8m-4},u_{2})=2m+1$.
$d(u_0,u_{4t+2})=d(v_{8m-4},u_{4t+2})=4+t$ when $1\le t\le m-2$.
$d(v_{8m-4},u_{4t+2})=2m-t+1$ and $d(u_0,u_{4t+2})>d(v_{8m-4},u_{4t+2})$ when $m-1\le t< 2m-1$.
$d(u_0,u_{3})=3$ and $d(v_{8m-4},u_{3})=2m$.
$d(u_0,u_{4t+3})=d(v_{8m-4},u_{4t+3})=4+t$ when $1\le t\le m-2$.
$d(v_{8m-4},u_{4t+3})=2m-t$ and $d(u_0,u_{4t+3})>d(v_{8m-4},u_{4t+3})$ when $m-1\le t< 2m-1$.

Note that $u_0\in W_{u_0v_{8m-4}}$ and $v_{8m-4}\in W_{v_{8m-4}u_0}$.
Combined with the above discussion, $|W_{u_0v_{8m-4}}|=3m+7$
and $|W_{v_{8m-4}u_0}|=5m$. Because $m\ge 4$, $|W_{u_0v_{8m-4}}|<|W_{v_{8m-4}u_0}|$.

(2) When $n=8m+1$ where $m\ge 3$.

Note that $n-4=8m-3=4(2m-1)+1$.

$d(u_0,v_{4t})=d(v_{8m-3},v_{4t})=1+t$ when $0\le t\le m$.
$d(u_0,v_{4t})=d(v_{8m-3},v_{4t})=2m-t+2$ when $m+1\le t\le 2m-1$.
$d(u_0,v_{4t+1})=2+t$ and $d(u_0,v_{4t+1})<d(v_{8m-3},v_{4t+1})$ when $0\le t\le m-2$.
$d(v_{8m-3},v_{4t+1})=2m-t-1$ and $d(u_0,v_{4t+1})>d(v_{8m-3},v_{4t+1})$ when $m-1\le t< 2m-1$.
$d(u_0,v_{4t+2})=3+t$ and $d(u_0,v_{4t+2})<d(v_{8m-3},v_{4t+2})$ when $0\le t\le m-1$.
$d(u_0,v_{4t+2})=d(v_{8m-3},v_{4t+2})=2m-t+2$ when $m\le t< 2m-1$.
$d(u_0,v_{4t+3})=3+t$ and $d(u_0,v_{4t+3})<d(v_{8m-3},v_{4t+3})$ when $0\le t\le m-1$.
$d(u_0,v_{4t+3})=d(v_{8m-3},v_{4t+3})=2m-t+2$ when $m\le t< 2m-1$.

$d(u_0,u_{4t})=d(v_{8m-3},u_{4t})=2+t$ when $1\le t\le m-1$.
$d(v_{8m-3},u_{4t})=2m-t+1$ and $d(u_0,u_{4t})>d(v_{8m-3},u_{4t})$ when $m\le t\le 2m-1$.
$d(u_0,u_{1})=1$ and $d(v_{8m-3},u_{1})=2m$.
$d(u_0,u_{4t+1})=d(v_{8m-3},u_{4t+1})=3+t$ when $1\le t\le m-2$.
$d(v_{8m-3},u_{4t+1})=2m-t$ and $d(u_0,u_{4t+1})>d(v_{8m-3},u_{4t+1})$ when $m-1\le t\le 2m-1$.
$d(u_0,u_{2})=2$ and $d(v_{8m-3},u_{2})=2m+1$.
$d(u_0,u_{4t+2})=d(v_{8m-3},u_{4t+2})=4+t$ when $1\le t\le m-2$.
$d(v_{8m-3},u_{4t+2})=2m-t+1$ and $d(u_0,u_{4t+2})>d(v_{8m-3},u_{4t+2})$ when $m-1\le t< 2m-1$.
$d(u_0,u_{3})=3$ and $d(v_{8m-3},u_{3})=2m+1$.
$d(u_0,u_{4t+3})=d(v_{8m-3},u_{4t+3})=4+t$ when $1\le t\le m-2$.
$d(v_{8m-3},u_{4t+3})=2m-t+1$ and $d(u_0,u_{4t+3})>d(v_{8m-3},u_{4t+3})$ when $m-1\le t< 2m-1$.

Note that $u_0\in W_{u_0v_{8m-3}}$ and $v_{8m-3}\in W_{v_{8m-3}u_0}$.
Combined with the above discussion, $|W_{u_0v_{8m-3}}|=3m+7$
and $|W_{v_{8m-3}u_0}|=5m+3$. Because $m\ge 3$, $|W_{u_0v_{8m-3}}|<|W_{v_{8m-3}u_0}|$.

(3) When $n=8m+2$ where $m\ge 3$.

Note that $n-4=8m-2=4(2m-1)+2$.

$d(u_0,v_{4t})=d(v_{8m-2},v_{4t})=1+t$ when $0\le t\le m+1$.
$d(u_0,v_{4t})=d(v_{8m-2},v_{4t})=2m-t+3$ when $m+2\le t\le 2m-1$.
$d(u_0,v_{4t+1})=2+t$ and $d(u_0,v_{4t+1})<d(v_{8m-2},v_{4t+1})$ when $0\le t\le m-1$.
$d(u_0,v_{4t+1})=d(v_{8m-2},v_{4t+1})=2m-t+2$ when $m\le t\le 2m-1$.
$d(u_0,v_{4t+2})=3+t$ and $d(u_0,v_{4t+2})<d(v_{8m-2},v_{4t+2})$ when $0\le t< m-2$.
$d(u_0,v_{4(m-2)+2})=d(v_{8m-2},v_{4(m-2)+2})=m+1$.
$d(v_{8m-2},v_{4t+2})=2m-t-1$ and $d(u_0,v_{4t+2})>d(v_{8m-2},v_{4t+2})$ when $m-2< t< 2m-1$.
$d(u_0,v_{4t+3})=3+t$ and $d(u_0,v_{4t+3})<d(v_{8m-2},v_{4t+3})$ when $0\le t\le m-1$.
$d(u_0,v_{4t+3})=d(v_{8m-2},v_{4t+3})=2m-t+2$ when $m\le t< 2m-1$.

$d(u_0,u_{4t})=d(v_{8m-2},u_{4t})=2+t$ when $1\le t\le m$.
$d(v_{8m-2},u_{4t})=2m-t+2$ and $d(u_0,u_{4t})>d(v_{8m-2},u_{4t})$ when $m+1\le t\le 2m-1$.
$d(u_0,u_{1})=1$ and $d(v_{8m-2},u_{1})=2m+1$.
$d(u_0,u_{4t+1})=d(v_{8m-2},u_{4t+1})=3+t$ when $1\le t\le m-1$.
$d(v_{8m-2},u_{4t+1})=2m-t+1$ and $d(u_0,u_{4t+1})>d(v_{8m-2},u_{4t+1})$ when $m\le t\le 2m-1$.
$d(u_0,u_{2})=2$ and $d(v_{8m-2},u_{2})=2m$.
$d(u_0,u_{4t+2})=d(v_{8m-2},u_{4t+2})=4+t$ when $1\le t\le m-2$.
$d(v_{8m-2},u_{4t+2})=2m-t$ and $d(u_0,u_{4t+2})>d(v_{8m-2},u_{4t+2})$ when $m-1\le t\le 2m-1$.
$d(u_0,u_{3})=3$ and $d(v_{8m-2},u_{3})=2m+1$.
$d(u_0,u_{4t+3})=d(v_{8m-2},u_{4t+3})=4+t$ when $1\le t\le m-2$.
$d(v_{8m-2},u_{4t+3})=2m-t+1$ and $d(u_0,u_{4t+3})>d(v_{8m-2},u_{4t+3})$ when $m-1\le t< 2m-1$.

Note that $u_0\in W_{u_0v_{8m-2}}$ and $v_{8m-2}\in W_{v_{8m-2}u_0}$.
Combined with the above discussion, $|W_{u_0v_{8m-2}}|=3m+6$
and $|W_{v_{8m-2}u_0}|=5m+2$. Because $m\ge 3$, $|W_{u_0v_{8m-2}}|<|W_{v_{8m-2}u_0}|$.

(4) When $n=8m+3$ where $m\ge 3$.

Note that $n-4=8m-1=4(2m-1)+3$.

$d(u_0,v_{4t})=d(v_{8m-1},v_{4t})=1+t$ when $0\le t\le m+1$.
$d(u_0,v_{4t})=d(v_{8m-1},v_{4t})=2m-t+3$ when $m+2\le t\le 2m-1$.
$d(u_0,v_{4t+1})=2+t$ and $d(u_0,v_{4t+1})<d(v_{8m-1},v_{4t+1})$ when $0\le t\le m$.
$d(u_0,v_{4t+1})=d(v_{8m-1},v_{4t+1})=2m-t+3$ when $m+1\le t\le 2m-1$.
$d(u_0,v_{4t+2})=3+t$ and $d(u_0,v_{4t+2})<d(v_{8m-1},v_{4t+2})$ when $0\le t\le m-1$.
$d(u_0,v_{4t+2})=d(v_{8m-1},v_{4t+2})=2m-t+2$ when $m\le t\le 2m-1$.
$d(u_0,v_{4t+3})=3+t$ and $d(u_0,v_{4t+3})<d(v_{8m-1},v_{4t+3})$ when $0\le t< m-2$.
$d(u_0,v_{4(m-2)+3})=d(v_{8m-1},v_{4(m-2)+3})=m+1$.
$d(v_{8m-1},v_{4t+3})=2m-t-1$ and $d(u_0,v_{4t+3})>d(v_{8m-1},v_{4t+3})$ when $m-2< t< 2m-1$.

$d(u_0,u_{4t})=d(v_{8m-1},u_{4t})=2+t$ when $1\le t\le m$.
$d(v_{8m-1},u_{4t})=2m-t+2$ and $d(u_0,u_{4t})>d(v_{8m-1},u_{4t})$ when $m+1\le t\le 2m-1$.
$d(u_0,u_{1})=1$ and $d(v_{8m-1},u_{1})=2m+2$.
$d(u_0,u_{4t+1})=d(v_{8m-1},u_{4t+1})=3+t$ when $1\le t\le m-1$.
$d(v_{8m-1},u_{4t+1})=2m-t+2$ and $d(u_0,u_{4t+1})>d(v_{8m-1},u_{4t+1})$ when $m\le t\le 2m-1$.
$d(u_0,u_{2})=2$ and $d(v_{8m-1},u_{2})=2m+1$.
$d(u_0,u_{4t+2})=d(v_{8m-1},u_{4t+2})=4+t$ when $1\le t\le m-2$.
$d(v_{8m-1},u_{4t+2})=2m-t+1$ and $d(u_0,u_{4t+2})>d(v_{8m-1},u_{4t+2})$ when $m-1\le t\le 2m-1$.
$d(u_0,u_{3})=3$ and $d(v_{8m-1},u_{3})=2m$.
$d(u_0,u_{4t+3})=d(v_{8m-1},u_{4t+3})=4+t$ when $1\le t\le m-2$.
$d(v_{8m-1},u_{4t+3})=2m-t$ and $d(u_0,u_{4t+3})>d(v_{8m-1},u_{4t+3})$ when $m-1\le t\le 2m-1$.

Note that $u_0\in W_{u_0v_{8m-1}}$ and $v_{8m-1}\in W_{v_{8m-1}u_0}$.
Combined with the above discussion, $|W_{u_0v_{8m-1}}|=3m+7$
and $|W_{v_{8m-1}u_0}|=5m+3$. Because $m\ge 3$, $|W_{u_0v_{8m-1}}|<|W_{v_{8m-1}u_0}|$.

(5) When $n=8m+4$ where $m\ge 3$.

Note that $n-4=8m=4\times 2m$.

$d(u_0,v_{4t})=d(v_{8m},v_{4t})=1+t$ when $0\le t\le m-1$.
$d(v_{8m},v_{4t})=2m-t$ and $d(u_0,v_{4t})>d(v_{8m},v_{4t})$ when $m\le t\le 2m-1$.
$d(u_0,v_{4t+1})=2+t$ and $d(u_0,v_{4t+1})<d(v_{8m},v_{4t+1})$ when $0\le t\le m$.
$d(u_0,v_{4t+1})=d(v_{8m},v_{4t+1})=2m-t+3$ when $m+1\le t\le 2m-1$.
$d(u_0,v_{4t+2})=3+t$ and $d(u_0,v_{4t+2})<d(v_{8m},v_{4t+2})$ when $0\le t\le m-1$.
$d(u_0,v_{4t+2})=d(v_{8m},v_{4t+2})=2m-t+3$ when $m\le t\le 2m-1$.
$d(u_0,v_{4t+3})=3+t$ and $d(u_0,v_{4t+3})<d(v_{8m},v_{4t+3})$ when $0\le t\le m-1$.
$d(u_0,v_{4t+3})=d(v_{8m},v_{4t+3})=2m-t+2$ when $m\le t\le 2m-1$.

$d(u_0,u_{4t})=d(v_{8m},u_{4t})=2+t$ when $1\le t\le m-1$.
$d(v_{8m},u_{4t})=2m-t+1$ and $d(u_0,u_{4t})>d(v_{8m},u_{4t})$ when $m\le t\le 2m$.
$d(u_0,u_{1})=1$ and $d(v_{8m},u_{1})=2m+2$.
$d(u_0,u_{4t+1})=d(v_{8m},u_{4t+1})=3+t$ when $1\le t\le m-1$.
$d(v_{8m},u_{4t+1})=2m-t+2$ and $d(u_0,u_{4t+1})>d(v_{8m},u_{4t+1})$ when $m\le t\le 2m-1$.
$d(u_0,u_{2})=2$ and $d(v_{8m},u_{2})=2m+2$.
$d(u_0,u_{4t+2})=d(v_{8m},u_{4t+2})=4+t$ when $1\le t\le m-1$.
$d(v_{8m},u_{4t+2})=2m-t+2$ and $d(u_0,u_{4t+2})>d(v_{8m},u_{4t+2})$ when $m\le t\le 2m-1$.
$d(u_0,u_{3})=3$ and $d(v_{8m},u_{3})=2m+1$.
$d(u_0,u_{4t+3})=d(v_{8m},u_{4t+3})=4+t$ when $1\le t\le m-2$.
$d(v_{8m},u_{4t+3})=2m-t+1$ and $d(u_0,u_{4t+3})>d(v_{8m},u_{4t+3})$ when $m-1\le t\le 2m-1$.

Note that $u_0\in W_{u_0v_{8m}}$ and $v_{8m}\in W_{v_{8m}u_0}$.
Combined with the above discussion, $|W_{u_0v_{8m}}|=3m+9$
and $|W_{v_{8m}u_0}|=5m+4$. Because $m\ge 3$, $|W_{u_0v_{8m}}|<|W_{v_{8m}u_0}|$.

(6) When $n=8m+5$ where $m\ge 3$.

Note that $n-4=8m+1=4\times 2m+1$.

$d(u_0,v_{4t})=d(v_{8m+1},v_{4t})=1+t$ when $0\le t\le m+1$.
$d(u_0,v_{4t})=d(v_{8m+1},v_{4t})=2m-t+3$ when $m+2\le t\le 2m$.
$d(u_0,v_{4t+1})=2+t$ and $d(u_0,v_{4t+1})<d(v_{8m+1},v_{4t+1})$ when $0\le t\le m-2$.
$d(u_0,v_{4(m-1)+1})=d(v_{8m+1},v_{4(m-1)+1})=m+1$.
$d(v_{8m+1},v_{4t+1})=2m-t$ and $d(u_0,v_{4t+1})>d(v_{8m+1},v_{4t+1})$ when $m\le t\le 2m-1$.
$d(u_0,v_{4t+2})=3+t$ and $d(u_0,v_{4t+2})<d(v_{8m+1},v_{4t+2})$ when $0\le t\le m-1$.
$d(u_0,v_{4t+2})=d(v_{8m+1},v_{4t+2})=2m-t+3$ when $m\le t\le 2m-1$.
$d(u_0,v_{4t+3})=3+t$ and $d(u_0,v_{4t+3})<d(v_{8m+1},v_{4t+3})$ when $0\le t\le m-1$.
$d(u_0,v_{4t+3})=d(v_{8m+1},v_{4t+3})=2m-t+3$ when $m\le t\le 2m-1$.

$d(u_0,u_{4t})=d(v_{8m+1},u_{4t})=2+t$ when $1\le t\le m$.
$d(v_{8m+1},u_{4t})=2m-t+2$ and $d(u_0,u_{4t})>d(v_{8m+1},u_{4t})$ when $m+1\le t\le 2m$.
$d(u_0,u_{1})=1$ and $d(v_{8m+1},u_{1})=2m+1$.
$d(u_0,u_{4t+1})=d(v_{8m+1},u_{4t+1})=3+t$ when $1\le t\le m-1$.
$d(v_{8m+1},u_{4t+1})=2m-t+1$ and $d(u_0,u_{4t+1})>d(v_{8m+1},u_{4t+1})$ when $m\le t\le 2m$.
$d(u_0,u_{2})=2$ and $d(v_{8m+1},u_{2})=2m+2$.
$d(u_0,u_{4t+2})=d(v_{8m+1},u_{4t+2})=4+t$ when $1\le t\le m-1$.
$d(v_{8m+1},u_{4t+2})=2m-t+2$ and $d(u_0,u_{4t+2})>d(v_{8m+1},u_{4t+2})$ when $m\le t\le 2m-1$.
$d(u_0,u_{3})=3$ and $d(v_{8m+1},u_{3})=2m+2$.
$d(u_0,u_{4t+3})=d(v_{8m+1},u_{4t+3})=4+t$ when $1\le t\le m-1$.
$d(v_{8m+1},u_{4t+3})=2m-t+2$ and $d(u_0,u_{4t+3})>d(v_{8m+1},u_{4t+3})$ when $m\le t\le 2m-1$.

Note that $u_0\in W_{u_0v_{8m+1}}$ and $v_{8m+1}\in W_{v_{8m+1}u_0}$.
Combined with the above discussion, $|W_{u_0v_{8m+1}}|=3m+7$
and $|W_{v_{8m+1}u_0}|=5m+3$. Because $m\ge 3$, $|W_{u_0v_{8m+1}}|<|W_{v_{8m+1}u_0}|$.

(7) When $n=8m+6$ where $m\ge 3$.

Note that $n-4=8m+2=4\times 2m+2$.

$d(u_0,v_{4t})=d(v_{8m+2},v_{4t})=1+t$ when $0\le t\le m+1$.
$d(u_0,v_{4t})=d(v_{8m+2},v_{4t})=2m-t+4$ when $m+2\le t\le 2m$.
$d(u_0,v_{4t+1})=2+t$ and $d(u_0,v_{4t+1})<d(v_{8m+2},v_{4t+1})$ when $0\le t\le m$.
$d(u_0,v_{4t+1})=d(v_{8m+2},v_{4t+1})=2m-t+3$ when $m+1\le t\le 2m$.
$d(u_0,v_{4t+2})=3+t$ and $d(u_0,v_{4t+2})<d(v_{8m+2},v_{4t+2})$ when $0\le t\le m-2$.
$d(v_{8m+2},v_{4t+2})=2m-t$ and $d(u_0,v_{4t+2})>d(v_{8m+2},v_{4t+2})$ when $m-1\le t\le 2m-1$.
$d(u_0,v_{4t+3})=3+t$ and $d(u_0,v_{4t+3})<d(v_{8m+2},v_{4t+3})$ when $0\le t\le m-1$.
$d(u_0,v_{4t+3})=d(v_{8m+2},v_{4t+3})=2m-t+3$ when $m\le t\le 2m-1$.

$d(u_0,u_{4t})=d(v_{8m+2},u_{4t})=2+t$ when $1\le t\le m$.
$d(v_{8m+2},u_{4t})=2m-t+3$ and $d(u_0,u_{4t})>d(v_{8m+2},u_{4t})$ when $m+1\le t\le 2m$.
$d(u_0,u_{1})=1$ and $d(v_{8m+2},u_{1})=2m+2$.
$d(u_0,u_{4t+1})=d(v_{8m+2},u_{4t+1})=3+t$ when $1\le t\le m-1$.
$d(v_{8m+2},u_{4t+1})=2m-t+2$ and $d(u_0,u_{4t+1})>d(v_{8m+2},u_{4t+1})$ when $m\le t\le 2m$.
$d(u_0,u_{2})=2$ and $d(v_{8m+2},u_{2})=2m+1$.
$d(u_0,u_{4t+2})=d(v_{8m+2},u_{4t+2})=4+t$ when $1\le t\le m-2$.
$d(v_{8m+2},u_{4t+2})=2m-t+1$ and $d(u_0,u_{4t+2})>d(v_{8m+2},u_{4t+2})$ when $m-1\le t\le 2m$.
$d(u_0,u_{3})=3$ and $d(v_{8m+2},u_{3})=2m+2$.
$d(u_0,u_{4t+3})=d(v_{8m+2},u_{4t+3})=4+t$ when $1\le t\le m-1$.
$d(v_{8m+2},u_{4t+3})=2m-t+2$ and $d(u_0,u_{4t+3})>d(v_{8m+2},u_{4t+3})$ when $m\le t\le 2m-1$.

Note that $u_0\in W_{u_0v_{8m+2}}$ and $v_{8m+2}\in W_{v_{8m+2}u_0}$.
Combined with the above discussion, $|W_{u_0v_{8m+2}}|=3m+8$
and $|W_{v_{8m+2}u_0}|=5m+6$. Because $m\ge 3$, $|W_{u_0v_{8m+2}}|<|W_{v_{8m+2}u_0}|$.

(8) When $n=8m+7$ where $m\ge 3$.

Note that $n-4=8m+3=4\times 2m+3$.

$d(u_0,v_{4t})=d(v_{8m+3},v_{4t})=1+t$ when $0\le t\le m+1$.
$d(u_0,v_{4t})=d(v_{8m+3},v_{4t})=2m-t+4$ when $m+2\le t\le 2m$.
$d(u_0,v_{4t+1})=2+t$ and $d(u_0,v_{4t+1})<d(v_{8m+3},v_{4t+1})$ when $0\le t\le m$.
$d(u_0,v_{4t+1})=d(v_{8m+3},v_{4t+1})=2m-t+4$ when $m+1\le t\le 2m$.
$d(u_0,v_{4t+2})=3+t$ and $d(u_0,v_{4t+2})<d(v_{8m+3},v_{4t+2})$ when $0\le t\le m-1$.
$d(u_0,v_{4t+2})=d(v_{8m+3},v_{4t+2})=2m-t+3$ when $m\le t\le 2m$.
$d(u_0,v_{4t+3})=3+t$ and $d(u_0,v_{4t+3})<d(v_{8m+3},v_{4t+3})$ when $0\le t\le m-2$.
$d(v_{8m+3},v_{4t+3})=2m-t$ and $d(u_0,v_{4t+3})>d(v_{8m+3},v_{4t+3})$ when $m-1\le t\le 2m-1$.

$d(u_0,u_{4t})=d(v_{8m+3},u_{4t})=2+t$ when $1\le t\le m$.
$d(v_{8m+3},u_{4t})=2m-t+3$ and $d(u_0,u_{4t})>d(v_{8m+3},u_{4t})$ when $m+1\le t\le 2m$.
$d(u_0,u_{1})=1$ and $d(v_{8m+3},u_{1})=2m+3$.
$d(u_0,u_{4t+1})=d(v_{8m+3},u_{4t+1})=3+t$ when $1\le t\le m$.
$d(v_{8m+3},u_{4t+1})=2m-t+3$ and $d(u_0,u_{4t+1})>d(v_{8m+3},u_{4t+1})$ when $m+1\le t\le 2m$.
$d(u_0,u_{2})=2$ and $d(v_{8m+3},u_{2})=2m+2$.
$d(u_0,u_{4t+2})=d(v_{8m+3},u_{4t+2})=4+t$ when $1\le t\le m-1$.
$d(v_{8m+3},u_{4t+2})=2m-t+2$ and $d(u_0,u_{4t+2})>d(v_{8m+3},u_{4t+2})$ when $m\le t\le 2m$.
$d(u_0,u_{3})=3$ and $d(v_{8m+3},u_{3})=2m+1$.
$d(u_0,u_{4t+3})=d(v_{8m+3},u_{4t+3})=4+t$ when $1\le t\le m-2$.
$d(v_{8m+3},u_{4t+3})=2m-t+1$ and $d(u_0,u_{4t+3})>d(v_{8m+3},u_{4t+3})$ when $m-1\le t\le 2m$.

Note that $u_0\in W_{u_0v_{8m+3}}$ and $v_{8m+3}\in W_{v_{8m+3}u_0}$.
Combined with the above discussion, $|W_{u_0v_{8m+3}}|=3m+8$
and $|W_{v_{8m+3}u_0}|=5m+6$. Because $m\ge 3$, $|W_{u_0v_{8m+3}}|<|W_{v_{8m+3}u_0}|$.

\end{proof}

\begin{proposition}\label{P:GP(n,4)-3}
For any $n>24$, the generalized Petersen graph $GP(n,4)$ is not $\ell$-distance-balanced for any $3\le \ell<\diam(GP(n,4))$.
\end{proposition}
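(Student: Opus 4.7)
The plan is to mirror the strategy of Proposition~\ref{P:GP(n,3)-3}, adapted from residues modulo $3$ to residues modulo $4$. For every $\ell$ with $3\le \ell<D$, the goal is to exhibit a single $v_j$ with $8\le j\le n/2$ and $d(u_0,v_j)=\ell$, and then to show that $|W_{u_0v_j}|<|W_{v_ju_0}|$.

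First I would use the explicit determinations from \cite{MaG:2023} of $D=\diam(GP(n,4))$ and of a pair $u_0,u_{j^*}$ realizing the diameter, case-split by $n$ modulo $8$. In each residue class, the four arithmetic progressions $d(u_0,v_{4s})$, $d(u_0,v_{4s+1})$, $d(u_0,v_{4s+2})$, $d(u_0,v_{4s+3})$ are strictly increasing in $s$ for $s\ge 2$, and together their values cover every integer in $[3,D-1]$. This step supplies the required $v_j$ with $8\le j\le n/2$.

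Next I would split $V(GP(n,4))\setminus\{u_0,v_0,u_j,v_j\}$ into the two arcs $V_1=\{u_i,v_i:1\le i\le j-1\}$ and $V_2=\{u_i,v_i:j+1\le i\le n-1\}$, and define $W^1_{u_0v_j}$, $W^1_{v_ju_0}$, $W^2_{u_0v_j}$, $W^2_{v_ju_0}$ exactly as in the proof of Proposition~\ref{P:GP(n,3)-3}. The reflective automorphism $i\mapsto n-i$ of $GP(n,4)$ yields $|W^2_{u_0v_j}|=|W^1_{u_0v_{n-j}}|$ and $|W^2_{v_ju_0}|=|W^1_{v_{n-j}u_0}|$, so after accounting for the doubly-counted vertices $u_0,v_0,u_j,v_j$ it suffices to verify the strict inequality $|W^1_{u_0v_j}|+|W^1_{u_0v_{n-j}}|<|W^1_{v_ju_0}|+|W^1_{v_{n-j}u_0}|$ for every admissible $j\in[8,n-8]$.

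The remaining work is the computation of $|W^1_{u_0v_j}|$ and $|W^1_{v_ju_0}|$. I would split into eight subcases according to $j\bmod 4\in\{0,1,2,3\}$ and the parity of the quotient $s$ with $j=4s+r$, parallelling the six subcases used for $k=3$. In each subcase, I would write down closed formulas for $d(u_0,u_{4t+r})$, $d(v_j,u_{4t+r})$, $d(u_0,v_{4t+r})$, $d(v_j,v_{4t+r})$ and locate the threshold value of $t$ at which the inequality between the two distances flips; summing the number of indices on the favourable side of each threshold expresses $|W^1_{u_0v_j}|$ and $|W^1_{v_ju_0}|$ as linear functions of $s$, with $|W^1_{v_ju_0}|-|W^1_{u_0v_j}|$ a positive constant (typically $2$) in each subcase. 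The main obstacle is sheer bookkeeping: the differences are small, so each of the eight subcases---together with small-$s$ exceptional values where the formulas degenerate (analogous to the $s=2,3$ tables for $k=3$)---must be verified separately, since a single miscounted boundary index would destroy the inequality. Once these estimates are in place, applying them to both $j$ and $n-j$ and summing yields the desired strict inequality $|W_{u_0v_j}|<|W_{v_ju_0}|$ for every $3\le\ell<D$.
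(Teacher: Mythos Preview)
Your plan is essentially the paper's proof: the same case-split on $n\bmod 8$ (via \cite{MaG:2023}) to locate a $v_j$ with $8\le j\le n/2$ at each distance $\ell$, the same $V_1/V_2$ reflective reduction to the quantities $|W^1_{u_0v_j}|$ and $|W^1_{v_ju_0}|$, and exactly the eight subcases $(j\bmod 4,\ s\bmod 2)$ you describe, each yielding $|W^1_{v_ju_0}|-|W^1_{u_0v_j}|=2$ for $s$ large.

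The one point your sketch slightly underestimates is that among the small-$s$ exceptional tables the local difference is not always nonnegative: in case $j=4s+2$ with $s=2$ the paper finds $|W^1_{u_0v_{10}}|=11>10=|W^1_{v_{10}u_0}|$. Consequently the choice $j=10$ can actually fail; for $n=25$ one has $d(u_0,v_{10})=5$ and pairing $j=10$ with $n-j=15$ gives a combined difference of $0$. The paper handles this by treating $n=25$ separately, explicitly using $j\in\{8,12,11\}$ for $\ell=3,4,5$ rather than the canonical $j=10$ at $\ell=5$. Your ``small-$s$ exceptional values'' clause covers this in spirit, but be aware that the anomaly is a genuine sign reversal (not merely a degenerate formula), so the final step requires choosing the witness $v_j$ with some care and a one-off verification at $n=25$.
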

\begin{proof}

For any $3\le\ell<D$, we first show that there exists $v_j$ such that $d(u_0,v_j)=\ell$ where $8\le j\le n/2$.
From \cite{MaG:2023}, there exists $j^*$ such that $d(u_0,u_{j^*})=D$.

When $n=8m$ $(m\ge 4)$ or $n=8m+1$ $(m\ge 3)$, from \cite{MaG:2023} we know that $j^*=4(m-1)+2$ and $D=d(u_0,u_{j^*})=m+3$.
Note that $d(u_0,v_{4s+2})=s+3$ where $2\le s\le m-1$ and $d(u_0,v_{4s})=s+1$ where $2\le s\le m$.

When $n=8m+2$ $(m\ge 3)$ or $n=8m+3$ $(m\ge 3)$, from \cite{MaG:2023} we know that $j^*=4m+1$ and $D=d(u_0,u_{j^*})=m+3$.
Note that $d(u_0,v_{4s+1})=s+2$ where $3\le s\le m$ and $d(u_0,v_{4s})=s+1$ where $2\le s\le m$.

When $n=8m+4$ $(m\ge 3)$ or $n=8m+5$ $(m\ge 3)$, from \cite{MaG:2023} we know that $j^*=4m+2$ and $D=d(u_0,u_{j^*})=m+4$.
Note that $d(u_0,v_{4s+2})=s+3$ where $2\le s\le m$ and $d(u_0,v_{4s})=s+1$ where $2\le s\le m$.

When $n=8m+6$ $(m\ge 3)$, from \cite{MaG:2023} we know that $j^*=4m+3$ and $D=d(u_0,u_{j^*})=m+4$.
Note that $d(u_0,v_{4s+3})=s+3$ where $2\le s\le m$ and $d(u_0,v_{4s})=s+1$ where $2\le s\le m$.

When $n=8m+7$ $(m\ge 3)$, from \cite{MaG:2023} we know that $j^*=4m+2$ and $D=d(u_0,u_{j^*})=m+4$.
Note that $d(u_0,v_{4s+2})=s+3$ where $2\le s\le m$ and $d(u_0,v_{4s})=s+1$ where $2\le s\le m$.

From the above discussion, there exists $j$ where $8\le j\le n/2$ such that $d(u_0,v_j)=\ell$ for any $3\le\ell<D$.
Let $V_1=\{u_i\mid 1\le i\le j-1\}\cup\{v_i\mid 1\le i\le j-1\}$ and $V_2=\{u_i\mid j+1\le i\le n-1\}\cup\{v_i\mid j+1\le i\le n-1\}$.
Let $W^1_{u_0v_j}$ be the set of vertices which are in $W_{u_0v_j}$ and also in $V_1\cup\{u_0,v_0,u_j,v_j\}$.
Let $W^1_{v_ju_0}$ be the set of vertices which are in $W_{v_ju_0}$ and also in $V_1\cup\{u_0,v_0,u_j,v_j\}$.
Let $W^2_{u_0v_j}$ be the set of vertices which are in $W_{u_0v_j}$ and also in $V_2\cup\{u_0,v_0,u_j,v_j\}$.
Let $W^2_{v_ju_0}$ be the set of vertices which are in $W_{v_ju_0}$ and also in $V_2\cup\{u_0,v_0,u_j,v_j\}$.
Because $8\le j\le n/2$, $|W^2_{u_0v_j}|=|W^1_{u_0v_{n-j}}|$ and $|W^2_{v_ju_0}|=|W^1_{v_{n-j}u_0}|$. So
$|W_{u_0v_j}|=|W^1_{u_0v_j}|+|W^2_{u_0v_j}|-2=|W^1_{u_0v_j}|+|W^1_{u_0v_{n-j}}|-2$ and
$|W_{v_ju_0}|=|W^1_{v_ju_0}|+|W^2_{v_ju_0}|-2=|W^1_{v_ju_0}|+|W^1_{v_{n-j}u_0}|-2$.
In the following we will compute $|W^1_{u_0v_j}|$ and $|W^1_{v_ju_0}|$ where $8\le j\le n-8$.
The discussion is divided into the following eight cases.

(1a) Computation of $|W^1_{u_0v_{4s}}|$ and $|W^1_{v_{4s}u_0}|$ when $s$ is odd and $s\ge 3$.

When $s=3$,

$d(u_0,v_0)=1$ and $d(v_{12},v_0)=3$. $d(u_0,v_4)=d(v_{12},v_4)=2$. $d(u_0,v_8)=3$ and $d(v_{12},v_8)=1$.
$d(u_0,v_1)=2$ and $d(v_{12},v_1)=6$. $d(u_0,v_5)=3$ and $d(v_{12},v_5)=5$. $d(u_0,v_9)=4$ and $d(v_{12},v_9)=4$.
$d(u_0,v_2)=3$ and $d(v_{12},v_2)=6$. $d(u_0,v_6)=4$ and $d(v_{12},v_6)=5$. $d(u_0,v_{10})=5$ and $d(v_{12},v_{10})=4$.
$d(u_0,v_3)=3$ and $d(v_{12},v_3)=5$. $d(u_0,v_7)=4$ and $d(v_{12},v_7)=4$. $d(u_0,v_{11})=5$ and $d(v_{12},v_{11})=3$.
So $v_0,v_1,v_2,v_3,v_5,v_6\in W^1_{u_0v_{12}}$ and $v_8,v_{10},v_{11}\in W^1_{v_{12}u_0}$.

$d(u_0,u_4)=3$ and $d(v_{12},u_4)=3$. $d(u_0,u_8)=4$ and $d(v_{12},u_8)=2$. $d(u_0,u_{12})=5$ and $d(v_{12},u_{12})=1$.
$d(u_0,u_1)=1$ and $d(v_{12},u_1)=5$. $d(u_0,u_5)=4$ and $d(v_{12},u_5)=4$. $d(u_0,u_9)=5$ and $d(v_{12},u_9)=3$.
$d(u_0,u_2)=2$ and $d(v_{12},u_2)=5$. $d(u_0,u_6)=5$ and $d(v_{12},u_6)=4$. $d(u_0,u_{10})=6$ and $d(v_{12},u_{10})=3$.
$d(u_0,u_3)=3$ and $d(v_{12},u_3)=4$. $d(u_0,u_7)=5$ and $d(v_{12},u_7)=3$. $d(u_0,u_{11})=6$ and $d(v_{12},u_{11})=2$.
So $u_1,u_2,u_3\in W^1_{u_0v_{12}}$ and $u_6,u_7,u_8,u_9,u_{10},u_{11},u_{12}\in W^1_{v_{12}u_0}$.

Note that $u_0\in W^1_{u_0v_{12}}$ and $v_{12}\in W^1_{v_{12}u_0}$.
Combined with the above discussion, $|W^1_{u_0v_{12}}|=10$ and $|W^1_{v_{12}u_0}|=11$.

When $s\ge 5$,

$d(u_0,v_{4t})=1+t$ and $d(v_{4s},v_{4t})=s-t$ where $0\le t<s$.
When $0\le t<\frac{s-1}{2}$, $d(u_0,v_{4t})<d(v_{4s},v_{4t})$.
When $\frac{s-1}{2}<t<s$, $d(u_0,v_{4t})>d(v_{4s},v_{4t})$.
$d(u_0,v_{4t+1})=2+t$ and $d(v_{4s},v_{4t+1})=s-t+3$ where $0\le t<s$.
When $0\le t<\frac{s+1}{2}$, $d(u_0,v_{4t+1})<d(v_{4s},v_{4t+1})$.
When $\frac{s+1}{2}<t<s$, $d(u_0,v_{4t+1})>d(v_{4s},v_{4t+1})$.
$d(u_0,v_{4t+2})=3+t$ and $d(v_{4s},v_{4t+2})=s-t+3$ where $0\le t<s$.
When $0\le t\le\frac{s-1}{2}$, $d(u_0,v_{4t+2})<d(v_{4s},v_{4t+2})$.
When $\frac{s+1}{2}\le t<s$, $d(u_0,v_{4t+2})>d(v_{4s},v_{4t+2})$.
$d(u_0,v_{4t+3})=3+t$ and $d(v_{4s},v_{4t+3})=s-t+2$ where $0\le t<s$.
When $0\le t<\frac{s-1}{2}$, $d(u_0,v_{4t+3})<d(v_{4s},v_{4t+3})$.
When $\frac{s-1}{2}< t<s$, $d(u_0,v_{4t+3})>d(v_{4s},v_{4t+3})$.

$d(u_0,u_{4t})=2+t$ and $d(v_{4s},u_{4t})=s-t+1$ where $1\le t\le s$.
When $1\le t<\frac{s-1}{2}$, $d(u_0,u_{4t})<d(v_{4s},u_{4t})$.
When $\frac{s-1}{2}<t\le s$, $d(u_0,u_{4t})>d(v_{4s},u_{4t})$.
$d(u_0,u_{1})=1$ and $d(v_{4s},u_{1})=s+2$.
$d(u_0,u_{4t+1})=3+t$ and $d(v_{4s},u_{4t+1})=s-t+2$ where $1\le t< s$.
When $1\le t<\frac{s-1}{2}$, $d(u_0,u_{4t+1})<d(v_{4s},u_{4t+1})$.
When $\frac{s-1}{2}<t< s$, $d(u_0,u_{4t+1})>d(v_{4s},u_{4t+1})$.
$d(u_0,u_{2})=2$ and $d(v_{4s},u_{2})=s+2$.
$d(u_0,u_{4t+2})=4+t$ and $d(v_{4s},u_{4t+2})=s-t+2$ where $1\le t< s$.
When $1\le t\le\frac{s-3}{2}$, $d(u_0,u_{4t+2})<d(v_{4s},u_{4t+2})$.
When $\frac{s-1}{2}\le t< s$, $d(u_0,u_{4t+2})>d(v_{4s},u_{4t+2})$.
$d(u_0,u_{3})=3$ and $d(v_{4s},u_{3})=s+1$.
$d(u_0,u_{4t+3})=4+t$ and $d(v_{4s},u_{4t+3})=s-t+1$ where $1\le t< s$.
When $1\le t<\frac{s-3}{2}$, $d(u_0,u_{4t+3})<d(v_{4s},u_{4t+3})$.
When $\frac{s-3}{2}< t< s$, $d(u_0,u_{4t+3})>d(v_{4s},u_{4t+3})$.

Note that $u_0\in W^1_{u_0v_{4s}}$ and $v_{4s}\in W^1_{v_{4s}u_0}$.
Combined with the above discussion, $|W^1_{u_0v_{4s}}|=4s-3$ and $|W^1_{v_{4s}u_0}|=4s-1$.

(1b) Computation of $|W^1_{u_0v_{4s}}|$ and $|W^1_{v_{4s}u_0}|$ when $s$ is even and $s\ge 2$.

When $s=2$,

$d(u_0,v_0)=1$ and $d(v_{8},v_0)=2$. $d(u_0,v_4)=2$ and $d(v_{8},v_4)=1$.
$d(u_0,v_1)=2$ and $d(v_{8},v_1)=5$. $d(u_0,v_5)=3$ and $d(v_{8},v_5)=4$.
$d(u_0,v_2)=3$ and $d(v_{8},v_2)=5$. $d(u_0,v_6)=4$ and $d(v_{8},v_6)=4$.
$d(u_0,v_3)=3$ and $d(v_{8},v_3)=4$. $d(u_0,v_7)=4$ and $d(v_{8},v_7)=3$.
So $v_0,v_1,v_2,v_3,v_5\in W^1_{u_0v_{8}}$ and $v_4,v_{7}\in W^1_{v_{8}u_0}$.

$d(u_0,u_4)=3$ and $d(v_{8},u_4)=2$. $d(u_0,u_8)=4$ and $d(v_{8},u_8)=1$.
$d(u_0,u_1)=1$ and $d(v_{8},u_1)=4$. $d(u_0,u_5)=4$ and $d(v_{8},u_5)=3$.
$d(u_0,u_2)=2$ and $d(v_{8},u_2)=4$. $d(u_0,u_6)=5$ and $d(v_{8},u_6)=3$.
$d(u_0,u_3)=3$ and $d(v_{8},u_3)=3$. $d(u_0,u_7)=5$ and $d(v_{8},u_7)=2$.
So $u_1,u_2\in W^1_{u_0v_{8}}$ and $u_4,u_5,u_6,u_7,u_{8}\in W^1_{v_{8}u_0}$.

Note that $u_0\in W^1_{u_0v_{8}}$ and $v_{8}\in W^1_{v_{8}u_0}$.
Combined with the above discussion, $|W^1_{u_0v_{8}}|=8$ and $|W^1_{v_{8}u_0}|=8$.

When $s\ge 4$,

$d(u_0,v_{4t})=1+t$ and $d(v_{4s},v_{4t})=s-t$ where $0\le t<s$.
When $0\le t\le\frac{s-2}{2}$, $d(u_0,v_{4t})<d(v_{4s},v_{4t})$.
When $\frac{s}{2}\le t<s$, $d(u_0,v_{4t})>d(v_{4s},v_{4t})$.
$d(u_0,v_{4t+1})=2+t$ and $d(v_{4s},v_{4t+1})=s-t+3$ where $0\le t<s$.
When $0\le t\le\frac{s}{2}$, $d(u_0,v_{4t+1})<d(v_{4s},v_{4t+1})$.
When $\frac{s+2}{2}\le t<s$, $d(u_0,v_{4t+1})>d(v_{4s},v_{4t+1})$.
$d(u_0,v_{4t+2})=3+t$ and $d(v_{4s},v_{4t+2})=s-t+3$ where $0\le t<s$.
When $0\le t<\frac{s}{2}$, $d(u_0,v_{4t+2})<d(v_{4s},v_{4t+2})$.
When $\frac{s}{2}< t<s$, $d(u_0,v_{4t+2})>d(v_{4s},v_{4t+2})$.
$d(u_0,v_{4t+3})=3+t$ and $d(v_{4s},v_{4t+3})=s-t+2$ where $0\le t<s$.
When $0\le t\le\frac{s-2}{2}$, $d(u_0,v_{4t+3})<d(v_{4s},v_{4t+3})$.
When $\frac{s}{2}\le t<s$, $d(u_0,v_{4t+3})>d(v_{4s},v_{4t+3})$.

$d(u_0,u_{4t})=2+t$ and $d(v_{4s},u_{4t})=s-t+1$ where $1\le t\le s$.
When $1\le t\le\frac{s-2}{2}$, $d(u_0,u_{4t})<d(v_{4s},u_{4t})$.
When $\frac{s}{2}\le t\le s$, $d(u_0,u_{4t})>d(v_{4s},u_{4t})$.
$d(u_0,u_{1})=1$ and $d(v_{4s},u_{1})=s+2$.
$d(u_0,u_{4t+1})=3+t$ and $d(v_{4s},u_{4t+1})=s-t+2$ where $1\le t< s$.
When $1\le t\le\frac{s-2}{2}$, $d(u_0,u_{4t+1})<d(v_{4s},u_{4t+1})$.
When $\frac{s}{2}\le t< s$, $d(u_0,u_{4t+1})>d(v_{4s},u_{4t+1})$.
$d(u_0,u_{2})=2$ and $d(v_{4s},u_{2})=s+2$.
$d(u_0,u_{4t+2})=4+t$ and $d(v_{4s},u_{4t+2})=s-t+2$ where $1\le t< s$.
When $1\le t<\frac{s-2}{2}$, $d(u_0,u_{4t+2})<d(v_{4s},u_{4t+2})$.
When $\frac{s-2}{2}< t< s$, $d(u_0,u_{4t+2})>d(v_{4s},u_{4t+2})$.
$d(u_0,u_{3})=3$ and $d(v_{4s},u_{3})=s+1$.
$d(u_0,u_{4t+3})=4+t$ and $d(v_{4s},u_{4t+3})=s-t+1$ where $1\le t< s$.
When $1\le t\le\frac{s-4}{2}$, $d(u_0,u_{4t+3})<d(v_{4s},u_{4t+3})$.
When $\frac{s-2}{2}\le t< s$, $d(u_0,u_{4t+3})>d(v_{4s},u_{4t+3})$.

Note that $u_0\in W^1_{u_0v_{4s}}$ and $v_{4s}\in W^1_{v_{4s}u_0}$.
Combined with the above discussion, $|W^1_{u_0v_{4s}}|=4s-1$ and $|W^1_{v_{4s}u_0}|=4s+1$.

(2a) Computation of $|W^1_{u_0v_{4s+1}}|$ and $|W^1_{v_{4s+1}u_0}|$ when $s$ is odd and $s\ge 3$.

$d(u_0,v_{4t})=1+t$ and $d(v_{4s+1},v_{4t})=s-t+3$ where $0\le t\le s$.
When $0\le t\le\frac{s+1}{2}$, $d(u_0,v_{4t})<d(v_{4s+1},v_{4t})$.
When $\frac{s+3}{2}\le t\le s$, $d(u_0,v_{4t})>d(v_{4s+1},v_{4t})$.
$d(u_0,v_{4t+1})=2+t$ and $d(v_{4s+1},v_{4t+1})=s-t$ where $0\le t<s$.
When $0\le t\le\frac{s-3}{2}$, $d(u_0,v_{4t+1})<d(v_{4s+1},v_{4t+1})$.
When $\frac{s-1}{2}\le t<s$, $d(u_0,v_{4t+1})>d(v_{4s+1},v_{4t+1})$.
$d(u_0,v_{4t+2})=3+t$ and $d(v_{4s+1},v_{4t+2})=s-t+3$ where $0\le t<s$.
When $0\le t\le\frac{s-1}{2}$, $d(u_0,v_{4t+2})<d(v_{4s+1},v_{4t+2})$.
When $\frac{s+1}{2}\le t<s$, $d(u_0,v_{4t+2})>d(v_{4s+1},v_{4t+2})$.
$d(u_0,v_{4t+3})=3+t$ and $d(v_{4s+1},v_{4t+3})=s-t+3$ where $0\le t<s$.
When $0\le t\le\frac{s-1}{2}$, $d(u_0,v_{4t+3})<d(v_{4s+1},v_{4t+3})$.
When $\frac{s+1}{2}\le t<s$, $d(u_0,v_{4t+3})>d(v_{4s+1},v_{4t+3})$.

$d(u_0,u_{4t})=2+t$ and $d(v_{4s+1},u_{4t})=s-t+2$ where $1\le t\le s$.
When $1\le t\le\frac{s-1}{2}$, $d(u_0,u_{4t})<d(v_{4s+1},u_{4t})$.
When $\frac{s+1}{2}\le t\le s$, $d(u_0,u_{4t})>d(v_{4s+1},u_{4t})$.
$d(u_0,u_{1})=1$ and $d(v_{4s+1},u_{1})=s+1$.
$d(u_0,u_{4t+1})=3+t$ and $d(v_{4s+1},u_{4t+1})=s-t+1$ where $1\le t\le s$.
When $1\le t\le\frac{s-3}{2}$, $d(u_0,u_{4t+1})<d(v_{4s+1},u_{4t+1})$.
When $\frac{s-1}{2}\le t\le s$, $d(u_0,u_{4t+1})>d(v_{4s+1},u_{4t+1})$.
$d(u_0,u_{2})=2$ and $d(v_{4s+1},u_{2})=s+2$.
$d(u_0,u_{4t+2})=4+t$ and $d(v_{4s+1},u_{4t+2})=s-t+2$ where $1\le t< s$.
When $1\le t\le\frac{s-3}{2}$, $d(u_0,u_{4t+2})<d(v_{4s+1},u_{4t+2})$.
When $\frac{s-1}{2}\le t< s$, $d(u_0,u_{4t+2})>d(v_{4s+1},u_{4t+2})$.
$d(u_0,u_{3})=3$ and $d(v_{4s+1},u_{3})=s+2$.
$d(u_0,u_{4t+3})=4+t$ and $d(v_{4s+1},u_{4t+3})=s-t+2$ where $1\le t< s$.
When $1\le t\le\frac{s-3}{2}$, $d(u_0,u_{4t+3})<d(v_{4s+1},u_{4t+3})$.
When $\frac{s-1}{2}\le t< s$, $d(u_0,u_{4t+3})>d(v_{4s+1},u_{4t+3})$.

Note that $u_0\in W^1_{u_0v_{4s+1}}$ and $v_{4s+1}\in W^1_{v_{4s+1}u_0}$.
Combined with the above discussion, $|W^1_{u_0v_{4s+1}}|=4s+1$ and $|W^1_{v_{4s+1}u_0}|=4s+3$.

(2b) Computation of $|W^1_{u_0v_{4s+1}}|$ and $|W^1_{v_{4s+1}u_0}|$ when $s$ is even and $s\ge 4$.


When $s\ge 4$,

$d(u_0,v_{4t})=1+t$ and $d(v_{4s+1},v_{4t})=s-t+3$ where $0\le t\le s$.
When $0\le t<\frac{s+2}{2}$, $d(u_0,v_{4t})<d(v_{4s+1},v_{4t})$.
When $\frac{s+2}{2}< t\le s$, $d(u_0,v_{4t})>d(v_{4s+1},v_{4t})$.
$d(u_0,v_{4t+1})=2+t$ and $d(v_{4s+1},v_{4t+1})=s-t$ where $0\le t<s$.
When $0\le t<\frac{s-2}{2}$, $d(u_0,v_{4t+1})<d(v_{4s+1},v_{4t+1})$.
When $\frac{s-2}{2}< t<s$, $d(u_0,v_{4t+1})>d(v_{4s+1},v_{4t+1})$.
$d(u_0,v_{4t+2})=3+t$ and $d(v_{4s+1},v_{4t+2})=s-t+3$ where $0\le t<s$.
When $0\le t<\frac{s}{2}$, $d(u_0,v_{4t+2})<d(v_{4s+1},v_{4t+2})$.
When $\frac{s}{2}< t<s$, $d(u_0,v_{4t+2})>d(v_{4s+1},v_{4t+2})$.
$d(u_0,v_{4t+3})=3+t$ and $d(v_{4s+1},v_{4t+3})=s-t+3$ where $0\le t<s$.
When $0\le t<\frac{s}{2}$, $d(u_0,v_{4t+3})<d(v_{4s+1},v_{4t+3})$.
When $\frac{s}{2}< t<s$, $d(u_0,v_{4t+3})>d(v_{4s+1},v_{4t+3})$.

$d(u_0,u_{4t})=2+t$ and $d(v_{4s+1},u_{4t})=s-t+2$ where $1\le t\le s$.
When $1\le t<\frac{s}{2}$, $d(u_0,u_{4t})<d(v_{4s+1},u_{4t})$.
When $\frac{s}{2}< t\le s$, $d(u_0,u_{4t})>d(v_{4s+1},u_{4t})$.
$d(u_0,u_{1})=1$ and $d(v_{4s+1},u_{1})=s+1$.
$d(u_0,u_{4t+1})=3+t$ and $d(v_{4s+1},u_{4t+1})=s-t+1$ where $1\le t\le s$.
When $1\le t<\frac{s-2}{2}$, $d(u_0,u_{4t+1})<d(v_{4s+1},u_{4t+1})$.
When $\frac{s-2}{2}< t\le s$, $d(u_0,u_{4t+1})>d(v_{4s+1},u_{4t+1})$.
$d(u_0,u_{2})=2$ and $d(v_{4s+1},u_{2})=s+2$.
$d(u_0,u_{4t+2})=4+t$ and $d(v_{4s+1},u_{4t+2})=s-t+2$ where $1\le t< s$.
When $1\le t<\frac{s-2}{2}$, $d(u_0,u_{4t+2})<d(v_{4s+1},u_{4t+2})$.
When $\frac{s-2}{2}< t< s$, $d(u_0,u_{4t+2})>d(v_{4s+1},u_{4t+2})$.
$d(u_0,u_{3})=3$ and $d(v_{4s+1},u_{3})=s+2$.
$d(u_0,u_{4t+3})=4+t$ and $d(v_{4s+1},u_{4t+3})=s-t+2$ where $1\le t< s$.
When $1\le t<\frac{s-2}{2}$, $d(u_0,u_{4t+3})<d(v_{4s+1},u_{4t+3})$.
When $\frac{s-2}{2}< t< s$, $d(u_0,u_{4t+3})>d(v_{4s+1},u_{4t+3})$.

Note that $u_0\in W^1_{u_0v_{4s+1}}$ and $v_{4s+1}\in W^1_{v_{4s+1}u_0}$.
Combined with the above discussion, $|W^1_{u_0v_{4s+1}}|=4s-3$ and $|W^1_{v_{4s+1}u_0}|=4s-1$.

(3a) Computation of $|W^1_{u_0v_{4s+2}}|$ and $|W^1_{v_{4s+2}u_0}|$ when $s$ is odd and $s\ge 3$.

When $s=3$,

$d(u_0,v_0)=1$ and $d(v_{14},v_0)=7$. $d(u_0,v_4)=2$ and $d(v_{14},v_4)=6$. $d(u_0,v_8)=3$ and $d(v_{14},v_8)=5$.
$d(u_0,v_{12})=4$ and $d(v_{14},v_{12})=4$.
$d(u_0,v_1)=2$ and $d(v_{14},v_1)=6$. $d(u_0,v_5)=3$ and $d(v_{14},v_5)=5$. $d(u_0,v_9)=4$ and $d(v_{14},v_9)=4$.
$d(u_0,v_{13})=5$ and $d(v_{14},v_{13})=3$.
$d(u_0,v_2)=3$ and $d(v_{14},v_2)=3$. $d(u_0,v_6)=4$ and $d(v_{14},v_6)=2$. $d(u_0,v_{10})=5$ and $d(v_{14},v_{10})=1$.
$d(u_0,v_3)=3$ and $d(v_{14},v_3)=6$. $d(u_0,v_7)=4$ and $d(v_{14},v_7)=5$. $d(u_0,v_{11})=5$ and $d(v_{14},v_{11})=4$.
So $v_0,v_1,v_3,v_4,v_5,v_7,v_8\in W^1_{u_0v_{14}}$ and $v_6,v_{10},v_{11},v_{13}\in W^1_{v_{14}u_0}$.

$d(u_0,u_4)=3$ and $d(v_{14},u_4)=5$. $d(u_0,u_8)=4$ and $d(v_{14},u_8)=4$. $d(u_0,u_{12})=5$ and $d(v_{14},u_{12})=3$.
$d(u_0,u_1)=1$ and $d(v_{14},u_1)=5$. $d(u_0,u_5)=4$ and $d(v_{14},u_5)=4$. $d(u_0,u_9)=5$ and $d(v_{14},u_9)=3$.
$d(u_0,u_{13})=6$ and $d(v_{14},u_{13})=2$.
$d(u_0,u_2)=2$ and $d(v_{14},u_2)=4$. $d(u_0,u_6)=5$ and $d(v_{14},u_6)=3$. $d(u_0,u_{10})=6$ and $d(v_{14},u_{10})=2$.
$d(u_0,u_{14})=7$ and $d(v_{14},u_{14})=1$.
$d(u_0,u_3)=3$ and $d(v_{14},u_3)=5$. $d(u_0,u_7)=5$ and $d(v_{14},u_7)=4$. $d(u_0,u_{11})=6$ and $d(v_{14},u_{11})=3$.
So $u_1,u_2,u_3,u_4\in W^1_{u_0v_{14}}$ and $u_6,u_7,u_9,u_{10},u_{11},u_{12},u_{13},u_{14}\in W^1_{v_{14}u_0}$.

Note that $u_0\in W^1_{u_0v_{14}}$ and $v_{14}\in W^1_{v_{14}u_0}$.
Combined with the above discussion, $|W^1_{u_0v_{14}}|=12$ and $|W^1_{v_{14}u_0}|=13$.

When $s\ge 5$,

$d(u_0,v_{4t})=1+t$ and $d(v_{4s+2},v_{4t})=s-t+4$ where $0\le t\le s$.
When $0\le t<\frac{s+3}{2}$, $d(u_0,v_{4t})<d(v_{4s+2},v_{4t})$.
When $\frac{s+3}{2}< t\le s$, $d(u_0,v_{4t})>d(v_{4s+2},v_{4t})$.
$d(u_0,v_{4t+1})=2+t$ and $d(v_{4s+2},v_{4t+1})=s-t+3$ where $0\le t\le s$.
When $0\le t<\frac{s+1}{2}$, $d(u_0,v_{4t+1})<d(v_{4s+2},v_{4t+1})$.
When $\frac{s+1}{2}< t\le s$, $d(u_0,v_{4t+1})>d(v_{4s+2},v_{4t+1})$.
$d(u_0,v_{4t+2})=3+t$ and $d(v_{4s+2},v_{4t+2})=s-t$ where $0\le t<s$.
When $0\le t<\frac{s-3}{2}$, $d(u_0,v_{4t+2})<d(v_{4s+2},v_{4t+2})$.
When $\frac{s-3}{2}< t<s$, $d(u_0,v_{4t+2})>d(v_{4s+2},v_{4t+2})$.
$d(u_0,v_{4t+3})=3+t$ and $d(v_{4s+2},v_{4t+3})=s-t+3$ where $0\le t<s$.
When $0\le t\le\frac{s-1}{2}$, $d(u_0,v_{4t+3})<d(v_{4s+2},v_{4t+3})$.
When $\frac{s+1}{2}\le t<s$, $d(u_0,v_{4t+3})>d(v_{4s+2},v_{4t+3})$.

$d(u_0,u_{4t})=2+t$ and $d(v_{4s+2},u_{4t})=s-t+3$ where $1\le t\le s$.
When $1\le t<\frac{s+1}{2}$, $d(u_0,u_{4t})<d(v_{4s+2},u_{4t})$.
When $\frac{s+1}{2}< t\le s$, $d(u_0,u_{4t})>d(v_{4s+2},u_{4t})$.
$d(u_0,u_{1})=1$ and $d(v_{4s+2},u_{1})=s+2$.
$d(u_0,u_{4t+1})=3+t$ and $d(v_{4s+2},u_{4t+1})=s-t+2$ where $1\le t\le s$.
When $1\le t<\frac{s-1}{2}$, $d(u_0,u_{4t+1})<d(v_{4s+2},u_{4t+1})$.
When $\frac{s-1}{2}< t\le s$, $d(u_0,u_{4t+1})>d(v_{4s+2},u_{4t+1})$.
$d(u_0,u_{2})=2$ and $d(v_{4s+2},u_{2})=s+1$.
$d(u_0,u_{4t+2})=4+t$ and $d(v_{4s+2},u_{4t+2})=s-t+1$ where $1\le t\le s$.
When $1\le t<\frac{s-3}{2}$, $d(u_0,u_{4t+2})<d(v_{4s+2},u_{4t+2})$.
When $\frac{s-3}{2}< t\le s$, $d(u_0,u_{4t+2})>d(v_{4s+2},u_{4t+2})$.
$d(u_0,u_{3})=3$ and $d(v_{4s+2},u_{3})=s+2$.
$d(u_0,u_{4t+3})=4+t$ and $d(v_{4s+2},u_{4t+3})=s-t+2$ where $1\le t< s$.
When $1\le t\le\frac{s-3}{2}$, $d(u_0,u_{4t+3})<d(v_{4s+2},u_{4t+3})$.
When $\frac{s-1}{2}\le t< s$, $d(u_0,u_{4t+3})>d(v_{4s+2},u_{4t+3})$.

Note that $u_0\in W^1_{u_0v_{4s+2}}$ and $v_{4s+2}\in W^1_{v_{4s+2}u_0}$.
Combined with the above discussion, $|W^1_{u_0v_{4s+2}}|=4s-1$ and $|W^1_{v_{4s+2}u_0}|=4s+1$.

(3b) Computation of $|W^1_{u_0v_{4s+2}}|$ and $|W^1_{v_{4s+2}u_0}|$ when $s$ is even and $s\ge 2$.

When $s=2$,

$d(u_0,v_0)=1$ and $d(v_{10},v_0)=6$. $d(u_0,v_4)=2$ and $d(v_{10},v_4)=5$. $d(u_0,v_8)=3$ and $d(v_{10},v_8)=4$.
$d(u_0,v_1)=2$ and $d(v_{10},v_1)=5$. $d(u_0,v_5)=3$ and $d(v_{10},v_5)=4$. $d(u_0,v_9)=4$ and $d(v_{10},v_9)=3$.
$d(u_0,v_2)=3$ and $d(v_{10},v_2)=2$. $d(u_0,v_6)=4$ and $d(v_{10},v_6)=1$.
$d(u_0,v_3)=3$ and $d(v_{10},v_3)=5$. $d(u_0,v_7)=4$ and $d(v_{10},v_7)=4$.
So $v_0,v_1,v_3,v_4,v_5,v_8\in W^1_{u_0v_{10}}$ and $v_2,v_{6},v_9\in W^1_{v_{10}u_0}$.

$d(u_0,u_4)=3$ and $d(v_{10},u_4)=4$. $d(u_0,u_8)=4$ and $d(v_{10},u_8)=3$.
$d(u_0,u_1)=1$ and $d(v_{10},u_1)=4$. $d(u_0,u_5)=4$ and $d(v_{10},u_5)=3$. $d(u_0,u_9)=5$ and $d(v_{10},u_9)=2$.
$d(u_0,u_2)=2$ and $d(v_{10},u_2)=3$. $d(u_0,u_6)=5$ and $d(v_{10},u_6)=2$. $d(u_0,u_{10})=6$ and $d(v_{10},u_{10})=1$.
$d(u_0,u_3)=3$ and $d(v_{10},u_3)=4$. $d(u_0,u_7)=5$ and $d(v_{10},u_7)=3$.
So $u_1,u_2,u_3,u_4\in W^1_{u_0v_{10}}$ and $u_5,u_6,u_7,u_{8},u_9,u_{10}\in W^1_{v_{10}u_0}$.

Note that $u_0\in W^1_{u_0v_{10}}$ and $v_{8}\in W^1_{v_{10}u_0}$.
Combined with the above discussion, $|W^1_{u_0v_{10}}|=11$ and $|W^1_{v_{10}u_0}|=10$.

When $s\ge 4$,

$d(u_0,v_{4t})=1+t$ and $d(v_{4s+2},v_{4t})=s-t+4$ where $0\le t\le s$.
When $0\le t\le\frac{s+2}{2}$, $d(u_0,v_{4t})<d(v_{4s+2},v_{4t})$.
When $\frac{s+4}{2}\le t\le s$, $d(u_0,v_{4t})>d(v_{4s+2},v_{4t})$.
$d(u_0,v_{4t+1})=2+t$ and $d(v_{4s+2},v_{4t+1})=s-t+3$ where $0\le t\le s$.
When $0\le t\le\frac{s}{2}$, $d(u_0,v_{4t+1})<d(v_{4s+2},v_{4t+1})$.
When $\frac{s+2}{2}\le t\le s$, $d(u_0,v_{4t+1})>d(v_{4s+2},v_{4t+1})$.
$d(u_0,v_{4t+2})=3+t$ and $d(v_{4s+2},v_{4t+2})=s-t$ where $0\le t<s$.
When $0\le t\le\frac{s-4}{2}$, $d(u_0,v_{4t+2})<d(v_{4s+2},v_{4t+2})$.
When $\frac{s-2}{2}\le t<s$, $d(u_0,v_{4t+2})>d(v_{4s+2},v_{4t+2})$.
$d(u_0,v_{4t+3})=3+t$ and $d(v_{4s+2},v_{4t+3})=s-t+3$ where $0\le t<s$.
When $0\le t<\frac{s}{2}$, $d(u_0,v_{4t+3})<d(v_{4s+2},v_{4t+3})$.
When $\frac{s}{2}< t<s$, $d(u_0,v_{4t+3})>d(v_{4s+2},v_{4t+3})$.

$d(u_0,u_{4t})=2+t$ and $d(v_{4s+2},u_{4t})=s-t+3$ where $1\le t\le s$.
When $1\le t\le\frac{s}{2}$, $d(u_0,u_{4t})<d(v_{4s+2},u_{4t})$.
When $\frac{s+2}{2}\le t\le s$, $d(u_0,u_{4t})>d(v_{4s+2},u_{4t})$.
$d(u_0,u_{1})=1$ and $d(v_{4s+2},u_{1})=s+2$.
$d(u_0,u_{4t+1})=3+t$ and $d(v_{4s+2},u_{4t+1})=s-t+2$ where $1\le t\le s$.
When $1\le t\le\frac{s-2}{2}$, $d(u_0,u_{4t+1})<d(v_{4s+2},u_{4t+1})$.
When $\frac{s}{2}\le t\le s$, $d(u_0,u_{4t+1})>d(v_{4s+2},u_{4t+1})$.
$d(u_0,u_{2})=2$ and $d(v_{4s+2},u_{2})=s+1$.
$d(u_0,u_{4t+2})=4+t$ and $d(v_{4s+2},u_{4t+2})=s-t+1$ where $1\le t\le s$.
When $1\le t\le\frac{s-4}{2}$, $d(u_0,u_{4t+2})<d(v_{4s+2},u_{4t+2})$.
When $\frac{s-2}{2}\le t\le s$, $d(u_0,u_{4t+2})>d(v_{4s+2},u_{4t+2})$.
$d(u_0,u_{3})=3$ and $d(v_{4s+2},u_{3})=s+2$.
$d(u_0,u_{4t+3})=4+t$ and $d(v_{4s+2},u_{4t+3})=s-t+2$ where $1\le t< s$.
When $1\le t<\frac{s-2}{2}$, $d(u_0,u_{4t+3})<d(v_{4s+2},u_{4t+3})$.
When $\frac{s-2}{2}< t< s$, $d(u_0,u_{4t+3})>d(v_{4s+2},u_{4t+3})$.

Note that $u_0\in W^1_{u_0v_{4s+2}}$ and $v_{4s+2}\in W^1_{v_{4s+2}u_0}$.
Combined with the above discussion, $|W^1_{u_0v_{4s+2}}|=4s+1$ and $|W^1_{v_{4s+2}u_0}|=4s+3$.

(4a) Computation of $|W^1_{u_0v_{4s+3}}|$ and $|W^1_{v_{4s+3}u_0}|$ when $s$ is odd and $s\ge 3$.

When $s=3$,

$d(u_0,v_0)=1$ and $d(v_{15},v_0)=7$. $d(u_0,v_4)=2$ and $d(v_{15},v_4)=6$. $d(u_0,v_8)=3$ and $d(v_{15},v_8)=5$.
$d(u_0,v_{12})=4$ and $d(v_{15},v_{12})=4$.
$d(u_0,v_1)=2$ and $d(v_{15},v_1)=7$. $d(u_0,v_5)=3$ and $d(v_{15},v_5)=6$. $d(u_0,v_9)=4$ and $d(v_{15},v_9)=5$.
$d(u_0,v_{13})=5$ and $d(v_{15},v_{13})=4$.
$d(u_0,v_2)=3$ and $d(v_{15},v_2)=6$. $d(u_0,v_6)=4$ and $d(v_{15},v_6)=5$. $d(u_0,v_{10})=5$ and $d(v_{15},v_{10})=4$.
$d(u_0,v_{14})=6$ and $d(v_{15},v_{14})=3$.
$d(u_0,v_3)=3$ and $d(v_{15},v_3)=3$. $d(u_0,v_7)=4$ and $d(v_{15},v_7)=2$. $d(u_0,v_{11})=5$ and $d(v_{15},v_{11})=1$.
So $v_0,v_1,v_2,v_4,v_5,v_6,v_8,v_9\in W^1_{u_0v_{15}}$ and $v_7,v_{10},v_{11},v_{13},v_{14}\in W^1_{v_{15}u_0}$.

$d(u_0,u_4)=3$ and $d(v_{15},u_4)=5$. $d(u_0,u_8)=4$ and $d(v_{15},u_8)=4$. $d(u_0,u_{12})=5$ and $d(v_{15},u_{12})=3$.
$d(u_0,u_1)=1$ and $d(v_{15},u_1)=6$. $d(u_0,u_5)=4$ and $d(v_{15},u_5)=5$. $d(u_0,u_9)=5$ and $d(v_{15},u_9)=4$.
$d(u_0,u_{13})=6$ and $d(v_{15},u_{13})=3$.
$d(u_0,u_2)=2$ and $d(v_{15},u_2)=5$. $d(u_0,u_6)=5$ and $d(v_{15},u_6)=4$. $d(u_0,u_{10})=6$ and $d(v_{15},u_{10})=3$.
$d(u_0,u_{14})=7$ and $d(v_{15},u_{14})=2$.
$d(u_0,u_3)=3$ and $d(v_{15},u_3)=4$. $d(u_0,u_7)=5$ and $d(v_{15},u_7)=3$. $d(u_0,u_{11})=6$ and $d(v_{15},u_{11})=2$.
$d(u_0,u_{15})=7$ and $d(v_{15},u_{15})=1$.
So $u_1,u_2,u_3,u_4,u_5\in W^1_{u_0v_{15}}$ and $u_6,u_7,u_9,u_{10},u_{11},u_{12},u_{13},u_{14},u_{15}\in W^1_{v_{15}u_0}$.

Note that $u_0\in W^1_{u_0v_{15}}$ and $v_{15}\in W^1_{v_{15}u_0}$.
Combined with the above discussion, $|W^1_{u_0v_{15}}|=14$ and $|W^1_{v_{15}u_0}|=15$.

When $s\ge 5$,

$d(u_0,v_{4t})=1+t$ and $d(v_{4s+3},v_{4t})=s-t+4$ where $0\le t\le s$.
When $0\le t<\frac{s+3}{2}$, $d(u_0,v_{4t})<d(v_{4s+3},v_{4t})$.
When $\frac{s+3}{2}< t\le s$, $d(u_0,v_{4t})>d(v_{4s+3},v_{4t})$.
$d(u_0,v_{4t+1})=2+t$ and $d(v_{4s+3},v_{4t+1})=s-t+4$ where $0\le t\le s$.
When $0\le t\le\frac{s+1}{2}$, $d(u_0,v_{4t+1})<d(v_{4s+3},v_{4t+1})$.
When $\frac{s+3}{2}\le t\le s$, $d(u_0,v_{4t+1})>d(v_{4s+3},v_{4t+1})$.
$d(u_0,v_{4t+2})=3+t$ and $d(v_{4s+3},v_{4t+2})=s-t+3$ where $0\le t\le s$.
When $0\le t\le\frac{s-1}{2}$, $d(u_0,v_{4t+2})<d(v_{4s+3},v_{4t+2})$.
When $\frac{s+1}{2}\le t\le s$, $d(u_0,v_{4t+2})>d(v_{4s+3},v_{4t+2})$.
$d(u_0,v_{4t+3})=3+t$ and $d(v_{4s+3},v_{4t+3})=s-t$ where $0\le t<s$.
When $0\le t<\frac{s-3}{2}$, $d(u_0,v_{4t+3})<d(v_{4s+3},v_{4t+3})$.
When $\frac{s-3}{2}< t<s$, $d(u_0,v_{4t+3})>d(v_{4s+3},v_{4t+3})$.

$d(u_0,u_{4t})=2+t$ and $d(v_{4s+3},u_{4t})=s-t+3$ where $1\le t\le s$.
When $1\le t<\frac{s+1}{2}$, $d(u_0,u_{4t})<d(v_{4s+3},u_{4t})$.
When $\frac{s+1}{2}< t\le s$, $d(u_0,u_{4t})>d(v_{4s+3},u_{4t})$.
$d(u_0,u_{1})=1$ and $d(v_{4s+3},u_{1})=s+3$.
$d(u_0,u_{4t+1})=3+t$ and $d(v_{4s+3},u_{4t+1})=s-t+3$ where $1\le t\le s$.
When $1\le t\le\frac{s-1}{2}$, $d(u_0,u_{4t+1})<d(v_{4s+3},u_{4t+1})$.
When $\frac{s+1}{2}\le t\le s$, $d(u_0,u_{4t+1})>d(v_{4s+3},u_{4t+1})$.
$d(u_0,u_{2})=2$ and $d(v_{4s+3},u_{2})=s+2$.
$d(u_0,u_{4t+2})=4+t$ and $d(v_{4s+3},u_{4t+2})=s-t+2$ where $1\le t\le s$.
When $1\le t\le\frac{s-3}{2}$, $d(u_0,u_{4t+2})<d(v_{4s+3},u_{4t+2})$.
When $\frac{s-1}{2}\le t\le s$, $d(u_0,u_{4t+2})>d(v_{4s+3},u_{4t+2})$.
$d(u_0,u_{3})=3$ and $d(v_{4s+3},u_{3})=s+1$.
$d(u_0,u_{4t+3})=4+t$ and $d(v_{4s+3},u_{4t+3})=s-t+1$ where $1\le t\le s$.
When $1\le t<\frac{s-3}{2}$, $d(u_0,u_{4t+3})<d(v_{4s+3},u_{4t+3})$.
When $\frac{s-3}{2}< t\le s$, $d(u_0,u_{4t+3})>d(v_{4s+3},u_{4t+3})$.

Note that $u_0\in W^1_{u_0v_{4s+3}}$ and $v_{4s+3}\in W^1_{v_{4s+3}u_0}$.
Combined with the above discussion, $|W^1_{u_0v_{4s+3}}|=4s+1$ and $|W^1_{v_{4s+3}u_0}|=4s+3$.

(4b) Computation of $|W^1_{u_0v_{4s+3}}|$ and $|W^1_{v_{4s+3}u_0}|$ when $s$ is even.

When $s=2$.

$d(u_0,v_0)=1$ and $d(v_{11},v_0)=6$. $d(u_0,v_4)=2$ and $d(v_{11},v_4)=5$. $d(u_0,v_8)=3$ and $d(v_{11},v_8)=4$.
$d(u_0,v_1)=2$ and $d(v_{11},v_1)=6$. $d(u_0,v_5)=3$ and $d(v_{11},v_5)=5$. $d(u_0,v_9)=4$ and $d(v_{11},v_9)=4$.
$d(u_0,v_2)=3$ and $d(v_{11},v_2)=5$. $d(u_0,v_6)=4$ and $d(v_{11},v_6)=4$. $d(u_0,v_{10})=5$ and $d(v_{11},v_{10})=3$.
$d(u_0,v_3)=3$ and $d(v_{11},v_3)=2$. $d(u_0,v_7)=4$ and $d(v_{11},v_7)=1$.
So $v_0,v_1,v_2,v_4,v_5,v_8\in W^1_{u_0v_{11}}$ and $v_3,v_7,v_{10}\in W^1_{v_{11}u_0}$.

$d(u_0,u_4)=3$ and $d(v_{11},u_4)=4$. $d(u_0,u_8)=4$ and $d(v_{11},u_8)=3$.
$d(u_0,u_1)=1$ and $d(v_{11},u_1)=5$. $d(u_0,u_5)=4$ and $d(v_{11},u_5)=4$. $d(u_0,u_9)=5$ and $d(v_{11},u_9)=3$.
$d(u_0,u_2)=2$ and $d(v_{11},u_2)=4$. $d(u_0,u_6)=5$ and $d(v_{11},u_6)=3$. $d(u_0,u_{10})=6$ and $d(v_{11},u_{10})=2$.
$d(u_0,u_3)=3$ and $d(v_{11},u_3)=3$. $d(u_0,u_7)=5$ and $d(v_{11},u_7)=2$. $d(u_0,u_{11})=6$ and $d(v_{11},u_{11})=1$.
So $u_1,u_2,u_4\in W^1_{u_0v_{11}}$ and $u_6,u_7,u_8,u_9,u_{10},u_{11}\in W^1_{v_{11}u_0}$.

Note that $u_0\in W^1_{u_0v_{11}}$ and $v_{11}\in W^1_{v_{11}u_0}$.
Combined with the above discussion, $|W^1_{u_0v_{11}}|=10$ and $|W^1_{v_{11}u_0}|=10$.

When $s\ge 4$.

$d(u_0,v_{4t})=1+t$ and $d(v_{4s+3},v_{4t})=s-t+4$ where $0\le t\le s$.
When $0\le t\le\frac{s+2}{2}$, $d(u_0,v_{4t})<d(v_{4s+3},v_{4t})$.
When $\frac{s+4}{2}\le t\le s$, $d(u_0,v_{4t})>d(v_{4s+3},v_{4t})$.
$d(u_0,v_{4t+1})=2+t$ and $d(v_{4s+3},v_{4t+1})=s-t+4$ where $0\le t\le s$.
When $0\le t<\frac{s+2}{2}$, $d(u_0,v_{4t+1})<d(v_{4s+3},v_{4t+1})$.
When $\frac{s+2}{2}< t\le s$, $d(u_0,v_{4t+1})>d(v_{4s+3},v_{4t+1})$.
$d(u_0,v_{4t+2})=3+t$ and $d(v_{4s+3},v_{4t+2})=s-t+3$ where $0\le t\le s$.
When $0\le t<\frac{s}{2}$, $d(u_0,v_{4t+2})<d(v_{4s+3},v_{4t+2})$.
When $\frac{s}{2}< t\le s$, $d(u_0,v_{4t+2})>d(v_{4s+3},v_{4t+2})$.
$d(u_0,v_{4t+3})=3+t$ and $d(v_{4s+3},v_{4t+3})=s-t$ where $0\le t<s$.
When $0\le t\le\frac{s-4}{2}$, $d(u_0,v_{4t+3})<d(v_{4s+3},v_{4t+3})$.
When $\frac{s-2}{2}\le t<s$, $d(u_0,v_{4t+3})>d(v_{4s+3},v_{4t+3})$.

$d(u_0,u_{4t})=2+t$ and $d(v_{4s+3},u_{4t})=s-t+3$ where $1\le t\le s$.
When $1\le t\le\frac{s}{2}$, $d(u_0,u_{4t})<d(v_{4s+3},u_{4t})$.
When $\frac{s+2}{2}\le t\le s$, $d(u_0,u_{4t})>d(v_{4s+3},u_{4t})$.
$d(u_0,u_{1})=1$ and $d(v_{4s+3},u_{1})=s+3$.
$d(u_0,u_{4t+1})=3+t$ and $d(v_{4s+3},u_{4t+1})=s-t+3$ where $1\le t\le s$.
When $1\le t<\frac{s}{2}$, $d(u_0,u_{4t+1})<d(v_{4s+3},u_{4t+1})$.
When $\frac{s}{2}< t\le s$, $d(u_0,u_{4t+1})>d(v_{4s+3},u_{4t+1})$.
$d(u_0,u_{2})=2$ and $d(v_{4s+3},u_{2})=s+2$.
$d(u_0,u_{4t+2})=4+t$ and $d(v_{4s+3},u_{4t+2})=s-t+2$ where $1\le t\le s$.
When $1\le t<\frac{s-2}{2}$, $d(u_0,u_{4t+2})<d(v_{4s+3},u_{4t+2})$.
When $\frac{s-2}{2}< t\le s$, $d(u_0,u_{4t+2})>d(v_{4s+3},u_{4t+2})$.
$d(u_0,u_{3})=3$ and $d(v_{4s+3},u_{3})=s+1$.
$d(u_0,u_{4t+3})=4+t$ and $d(v_{4s+3},u_{4t+3})=s-t+1$ where $1\le t\le s$.
When $1\le t\le\frac{s-4}{2}$, $d(u_0,u_{4t+3})<d(v_{4s+3},u_{4t+3})$.
When $\frac{s-2}{2}\le t\le s$, $d(u_0,u_{4t+3})>d(v_{4s+3},u_{4t+3})$.

Note that $u_0\in W^1_{u_0v_{4s+3}}$ and $v_{4s+3}\in W^1_{v_{4s+3}u_0}$.
Combined with the above discussion, $|W^1_{u_0v_{4s+3}}|=4s+1$ and $|W^1_{v_{4s+3}u_0}|=4s+3$.

When $n\ge 26$, from the above computation of $|W^1_{u_0v_j}|$ and $|W^1_{v_ju_0}|$ where $8\le j\le n-8$,
for any $3\le \ell<D$,
we know that there exists $j$ where $d(u_0,v_j)=\ell$ and $8\le j\le n/2$ such that $|W_{u_0v_j}|<|W_{v_ju_0}|$.
When $n=25$, $d(u_0,v_8)=3$, $d(u_0,v_{12})=4$, $d(u_0,v_{11})=5$ and $D(GP(25,4))=6$.
From the above computation of $|W^1_{u_0v_j}|$ and $|W^1_{v_ju_0}|$,
we know that $|W_{u_0v_j}|<|W_{v_ju_0}|$ for any $j=8,11,12$.

The proof of the theorem completes.

\end{proof}

\section{Concluding remarks}\label{S:conluding}

In this paper, we prove that $GP(n,3)$ is not $\ell$-distance-balanced where $n>16$ and $1\le\ell<\diam(GP(n,3))$.
We also prove that $GP(n,4)$ is not $\ell$-distance-balanced where $n>24$ and $1\le\ell<\diam(GP(n,4))$.
The authors in \cite{Miklavic:2018} proved that $GP(n,2)$ is not $\ell$-distance-balanced where $n>11$ and $1\le\ell<\diam(GP(n,2))$.
When $k\ge 5$, Conjecture \ref{C:GP-onlyD-DB} is worth studying in the future.

Combined with the main results of this paper, Theorem \ref{T:GP-DDB} and Conjecture \ref{C:GP-onlyD-DB}, 
the following problem is worth
studying in the future.

\begin{problem}
(1) When $k\ge 5$, $k$ is odd and $\frac{k(k+1)}{2}\le n\le (k+1)^2$, whether $GP(n,k)$ is $\ell$-distance-balanced or not
where $1\le\ell<\diam(GP(n,k))$.\\
(2) When $k\ge 6$, $k$ is even and $\frac{k^2}{2}\le n\le k(k+2)$, whether $GP(n,k)$ is $\ell$-distance-balanced or not
where $1\le\ell<\diam(GP(n,k))$.
\end{problem}

\section*{Acknowledgments}

This work was supported by Shandong Provincial Natural Science Foundation of China (ZR2022MA077), the research grant NSFC (11971274) of China and IC Program of Shandong Institutions of Higher Learning For Youth Innovative Talents. Sand Klav\v{z}ar was supported by the Slovenian Research Agency (ARRS) under the grants P1-0297, J1-2452, N1-0285.

\section*{Conflict of interest statement}

On behalf of all authors, the corresponding author states that there is no conflict of interest.

\section*{Data availability statement}

Data sharing not applicable to this article as no datasets were generated or analysed during the current study.

\end{document}